\DeclareSymbolFont{cyss}{OT2}{wncyss}{m}{n}
\DeclareMathSymbol{\sh}{\mathbin}{cyss}{`x}
\newtheorem{thrm}{Theorem}[section]
\newtheorem{lem}[thrm]{Lemma}
\newtheorem{prop}[thrm]{Proposition}
\newtheorem{cor}[thrm]{Corollary}
\theoremstyle{definition}
\newtheorem{definition}[thrm]{Definition}
\newtheorem{remark}[thrm]{Remark}
\newtheorem{example}[thrm]{Example}
\numberwithin{equation}{section}
\def\fs{\Psi}
\def\abs#1{\lvert#1\rvert}
\def\q{{\mathbb{Q}}}
\def\w{\omega}
\def\cc{{\mathcal{C}}}
\def\ee{{\varepsilon}}
\DeclareRobustCommand\openone{\leavevmode\hbox{\small1\normalsize\kern-.33em1}}
\author{Yasushi Komori}
\address{Y. Komori, Department of Mathematics, Rikkyo University, Nishi-Ikebukuro, Toshima-ku, Tokyo 171-8501, Japan}
\email{komori@rikkyo.ac.jp}
\thanks{}
\author{Kohji Matsumoto}
\address{K. Matsumoto, Graduate School of Mathematics, Nagoya University, Chikusa-ku, Nagoya 464-8602 Japan}
\email{kohjimat@math.nagoya-u.ac.jp}
\thanks{}
\author{Hirofumi Tsumura}
\address{H. Tsumura, Department of Mathematics and Information Sciences, Tokyo Metropolitan University, 1-1, Minami-Ohsawa, Hachioji, Tokyo 192-0397 Japan}
\email{tsumura@tmu.ac.jp}
\thanks{}
\keywords{Multiple Zeta values, Witten zeta-functions, root systems, Riemann's zeta-function}
\subjclass[2010]{Primary 11M32, Secondly 11M06}
\begin{document}

\title[A study on multiple zeta values]{A study on multiple zeta values from the viewpoint of zeta-functions of root systems}
\date{}

\begin{abstract}
We study multiple zeta values (MZVs) from the viewpoint of zeta-functions associated with the root systems which we have studied in our previous papers. In fact, the $r$-ple zeta-functions of Euler-Zagier type can be regarded as the zeta-function associated with a certain sub-root system of type $C_r$. Hence, by the action of the Weyl group, we can find new aspects of MZVs which imply that the well-known 
formula for MZVs given by Hoffman and Zagier coincides with Witten's volume formula 
associated with the above sub-root system of type $C_r$. Also, from this observation, 
we can prove some new formulas which especially include the parity results of double and triple zeta values. As another important application, we give certain refinement of restricted sum formulas, which gives restricted sum formulas among MZVs of an arbitrary depth $r$ which were previously known only in the cases of depth $2,3,4$. 
Furthermore, considering a sub-root system of type $B_r$ analogously, we can give relevant analogues of the Hoffman-Zagier formula, parity results and restricted sum formulas.
\end{abstract}

\maketitle

\section{Introduction}\label{sec-1}

Let $\mathbb{N}$, $\mathbb{N}_0$, $\mathbb{Z}$, $\mathbb{Q}$, $\mathbb{R}$, 
$\mathbb{C}$ be the set of positive integers, non-negative integers, rational integers,
rational numbers, real numbers, and complex numbers, respectively.

We define the Euler-Zagier $r$-ple zeta-function (simply called the Euler-Zagier sum) by 
\begin{equation}
\zeta_r(s_1,\ldots,s_r)=\sum_{0<n_1<\cdots<n_r}\frac{1}{n_1^{s_1}n_2^{s_2}
    \cdots n_r^{s_r}},\label{e-1-1}
\end{equation}
where $s_1,\ldots,s_r$ are complex variables. When $(s_1,s_2,\ldots,s_r)\in \mathbb{N}^r$ $(s_r>1)$, this is called the multiple zeta value (MZV) of depth $r$ first studied by Hoffman \cite{Hoff} and Zagier\cite{Za}. 
Though the opposite order of summation in the 
definition of $\zeta_{r}(s_1,\ldots,s_r)$ is also used recently, we use the order in \eqref{e-1-1} through this paper because it is natural in our study. 
In the research of MZVs, the main target is to give non-trivial relations  
among them, in order to investigate the structure of the algebra generated by them 
(for the details, see Kaneko \cite{Ka}).

In our previous papers \cite{KMT}-\cite{KM4} and \cite{MT1}, as more general multiple series, 
we defined and studied multi-variable zeta-functions associated with root systems of 
type $X_r$ ($X=A,B,C,D,E,F,G$) denoted by 
$\zeta_r(s_1,\ldots,s_n;X_r)$ where $n$ is the number of positive roots of type $X_r$ 
(see definition \eqref{def-zeta}). In particular when $s_1=\cdots=s_r=s$, 
$\zeta_r(s,\ldots,s;X_r)$ essentially coincides with the Witten zeta-function 
(see Witten \cite{Wi} and Zagier\cite{Za}). An important fact is 
\begin{equation}
\zeta_r(2k,2k,\ldots,2k;X_r)\in \mathbb{Q}\cdot \pi^{2kn}\quad (k\in \mathbb{N}), 
\label{Witten-VF}
\end{equation}
which is a consequence of Witten's volume formula given in \cite{Wi}. Since we 
considered multi-variable version of Witten zeta-function, we were able to determine
the rational coefficients in \eqref{Witten-VF} explicitly in a generalized form 
(see \cite[Thoerem 4.6]{KM3}). 

Recently, in our previous paper \cite{KMT-MZ}, we regarded MZVs as special values of 
zeta-functions of root systems of type $A_r$, and clarified the structure of the 
shuffle product 
procedure for MZVs from this viewpoint.    In fact, we showed that the shuffle product
procedure can be described in terms of partial fraction decompositions of
zeta-functions of root systems of type $A_r$.

The main idea in the present paper is to regard \eqref{e-1-1} as a specialization of 
zeta-functions of root
systems of type $C_r$ (see below). 
It is essential in our theory that $C_r$ is not simply-laced.
In fact, there exists a subset of the root system of 
type $C_r$ so that the Euler-Zagier sum \eqref{e-1-1} is the zeta-function 
associated with this subset (see Section \ref{sec-4}). 
This subset itself is a root system, and hence the Weyl group 
naturally acts on \eqref{e-1-1}.  General fundamental results will be stated in Section 
\ref{sec-3}, and their proofs will be given in Section \ref{sec-proof1}.
As a consequence, it can be shown that a kind of formula \eqref{Witten-VF} 
corresponding to this sub-root system implies the well-known result given by Hoffman \cite[Section 2]{Hoff} and Zagier \cite[Section 9]{Za} independently: 
\begin{equation}
\zeta_r(2k,2k,\ldots,2k)\in \mathbb{Q}\cdot \pi^{2kr}\quad (k\in \mathbb{N})\label{H-Z}
\end{equation}
(see Corollary \ref{Cor-Z}). 

Furthermore, based on this observation in the cases when $r=2,3$, we will give explicit 
formulas for double series and for triple series (see Proposition \ref{Pr-1} and 
Theorem \ref{T-5-1}) which include what is called the parity results for double and 
triple zeta values (see Corollary \ref{C-5-3}). 

Similarly we can consider analogues of those results corresponding to a sub-root system of type $B_r$. In fact, we can define a $B_r$-type analogue of $\zeta_{r}({\bf s})$ by 
\begin{align}
& \zeta_{r}^\sharp({\bf s}) =\sum_{m_1,\ldots,m_r=1}^\infty
\prod_{i=1}^{r}
\frac{1}{(2\sum_{j=r-i+1}^{r-1}m_j+m_r)^{{s_i}}}, \label{def-Br-Zeta}
\end{align}
which is a ``partial sum'' of the series of $\zeta_{r}({\bf s})$ (see Section \ref{sec-6}). From the viewpoint of root systems, we see that this has some properties similar to those of $\zeta_{r}({\bf s})$, because the root system of type $B_r$ is a dual of that of type $C_r$. Actually we can obtain an analogue of \eqref{H-Z} for this series (see Corollary \ref{C-6-2}). We also prove a formula between the values of $\zeta_{2}^\sharp({\bf s})$ and the Riemann zeta values (see Theorem \ref{T-B2-EZ}), which gives the parity result corresponding to type $B_r$ (see Theorem \ref{T-B2-EZ}). This result plays an important role in a recent study on the dimension of the linear space spanned by double zeta values of level $2$ given by Kaneko and Tasaka (see \cite{Ka-Ta}).

The fact that parity results hold in those classes implies that those are ``nice''
classes.   In Section \ref{sec-acs} we will study those classes from the analytic
point of view, and prove that those classes, as well as the subclass of
zeta-functions of root systems of type $A_r$ introduced in \cite{KMT-MZ},
are ``closed'' in a certain analytic sense.

Another important consequence of our fundamental theorem in Section \ref{sec-3} is
the ``refined restricted sum formulas'' for the values of $\zeta_{r}({\bf s})$
and $\zeta_{r}^\sharp({\bf s})$, which are embodied in Corollaries \ref{Cor-Cr-Sr}  
and \ref{Cor-Br-Sr}.
One of the famous formulas among MZVs is the sum formula, which is, in the case of 
double zeta values, written as
\begin{equation}
\sum_{j=2}^{K-1}\zeta_2(K-j,j)=\zeta(K)\quad (K\in \mathbb{Z}_{\geq 3}).  \label{sumformula}
\end{equation}
Gangl, Kaneko and Zagier \cite{GKZ} obtained the following formulas, which
``divide'' \eqref{sumformula} for even $K$ into two parts: 
\begin{equation}
\begin{split}
& \sum_{a,b \in \mathbb{N}\atop a+b=N} \zeta_2(2a,2b)=\frac{3}{4}\zeta(2N)\in \mathbb{Q}\cdot \pi^{2N}\quad (N\in \mathbb{Z}_{\geq 2}), \\
& \sum_{a,b \in \mathbb{N}\atop a+b=N} \zeta_2(2a-1,2b+1)=\frac{1}{4}\zeta(2N)\in \mathbb{Q}\cdot \pi^{2N}\quad (N\in \mathbb{Z}_{\geq 2}),
\end{split}
\label{F-GKZ}
\end{equation}
which are sometimes called the restricted sum formulas. 
More recently, Shen and Cai \cite{Shen-Cai} gave restricted sum formulas for triple and fourth zeta values (see \eqref{sumf-triple} and \eqref{sumf-fourth}).
As we will discuss in Section \ref{sumf},
our Corollaries \ref{Cor-Cr-Sr} and \ref{Cor-Br-Sr} give more refined restricted
sum formulas for $\zeta_r({\bf s})$ and for $\zeta_r^\sharp({\bf s})$ of an 
arbitrary depth $r$.   From these refined formulas we can deduce the restricted
sum formulas for an arbitrary depth $r$,
actually in a generalized form involving a parameter $d$
(see Theorems \ref{sumf-EZ-Cr} and \ref{sumf-EZ-Br}).

A part of the results in the present paper has been announced in \cite{KMT-PJA}.


\section{Zeta-functions of root systems and root sets}\label{sec-2}

In this section, we recall the definition of zeta-functions of root systems studied in our papers \cite{KMT}-\cite{KM3}. 
For the details of basic facts about root systems and Weyl groups, see                   
\cite{Bourbaki,Hum72,Hum}.

Let $V$ be an $r$-dimensional real vector space equipped with an inner product $\langle \cdot,\cdot\rangle$.
The dual space $V^*$ is identified with $V$ via the inner product of $V$.
Let $\Delta$ be a finite irreducible reduced root system, and
$\fs=\{\alpha_1,\ldots,\alpha_r\}$ its fundamental system.
We fix 
$\Delta_+$ and $\Delta_-$ as the set of all positive roots and negative roots respectively.
Then we have a decomposition of the root system $\Delta=\Delta_+\coprod\Delta_-$.
Let $Q=Q(\Delta)$ be the root lattice, $Q^\vee$ the coroot lattice,
$P=P(\Delta)$ the weight lattice, $P^\vee$ the coweight lattice,
and
$P_{++}$ the set of integral strongly dominant weights
respectively defined by
\begin{align*}
&  Q=\bigoplus_{i=1}^r\mathbb{Z}\,\alpha_i,\qquad
  Q^\vee=\bigoplus_{i=1}^r\mathbb{Z}\,\alpha^\vee_i,\\
& P=\bigoplus_{i=1}^r\mathbb{Z}\,\lambda_i, \qquad 
  P^\vee=\bigoplus_{i=1}^r\mathbb{Z}\,\lambda^\vee_i,\qquad P_{++}=\bigoplus_{i=1}^r\mathbb{N}\,\lambda_i,
\end{align*}
where the fundamental weights $\{\lambda_j\}_{j=1}^r$
and
the fundamental coweights $\{\lambda_j^\vee\}_{j=1}^r$
are the dual bases of $\fs^\vee$ and $\fs$
satisfying $\langle \alpha_i^\vee,\lambda_j\rangle=\delta_{ij}$ (Kronecker's delta)
and $\langle \lambda_i^\vee,\alpha_j\rangle=\delta_{ij}$ respectively.

Let $\sigma_\alpha :V\to V$ be the reflection with respect to a root $\alpha\in\Delta$ defined by 
$$\sigma_\alpha:v\mapsto v-\langle \alpha^\vee,v\rangle\alpha.$$
For a subset $A\subset\Delta$, let
$W(A)$ be the group generated by reflections $\sigma_\alpha$ for all $\alpha\in A$. In particular, $W=W(\Delta)$ is the Weyl group, and 
$\{\sigma_j:=\sigma_{\alpha_j}\,|\,1\leq j \leq r\}$ generates $W$. 
For $w\in W$, denote 
$\Delta_w=\Delta_+\cap w^{-1}\Delta_-$.
The zeta-function associated with $\Delta$ is defined by
\begin{equation}
  \zeta_r(\mathbf{s},\mathbf{y};\Delta)
  =
  \sum_{\lambda\in P_{++}}
  e^{2\pi i\langle\mathbf{y},\lambda\rangle}\prod_{\alpha\in\Delta_+}
  \frac{1}{\langle\alpha^\vee,\lambda\rangle^{s_\alpha}}, \label{def-zeta}
\end{equation}
where $\mathbf{s}=(s_{\alpha})_{\alpha\in\Delta_+}\in \mathbb{C}^{|\Delta_+|}$ 
and $\mathbf{y}\in V$.
This can be regarded as a 
multi-variable version of Witten zeta-functions formulated by Zagier \cite{Za} 
based on the work of Witten \cite{Wi}. 

Let $\Delta^*$ be a subset of $\Delta_+$.   We call $\Delta^*$ a
{\it root set} (or a {\it root subset} of $\Delta_+$)
if, for any $\lambda_j$ ($1\leq j\leq r$), there exists an element
$\alpha\in\Delta^*$ for which $\langle\alpha,\lambda_j \rangle\neq 0$ holds.
We define the zeta-function associated with a root set $\Delta^*$ by
\begin{equation}                                                                       
  \zeta_r(\mathbf{s},\mathbf{y};\Delta^*)                                               
  =                                                                                     
  \sum_{\lambda\in P_{++}}                                                              
  e^{2\pi i\langle\mathbf{y},\lambda\rangle}\prod_{\alpha\in\Delta^*}                   
  \frac{1}{\langle\alpha^\vee,\lambda\rangle^{s_\alpha}}. \label{def-zeta-root-set}
\end{equation}
In the case $\mathbf{y}=\mathbf{0}$, this zeta-function was introduced in \cite{KM2}.
When the root system is of type $X_r$, we write $\Delta=\Delta(X_r)$, 
$\Delta^*=\Delta^*(X_r)$, and so on.

\begin{remark}
The notion of $\zeta_r(\mathbf{s},\mathbf{y};\Delta^*)$ depends not only on
$\Delta^*$, but also on $\Delta_+$, because the summation on
\eqref{def-zeta-root-set} runs over all strongly dominant weights associated
with $\Delta_+$.
\end{remark}

\section{Fundamental formulas}\label{sec-3}

In this section, we state several fundamental formulas which are certain extensions 
of our previous results given in \cite{KM2,KM5,KM3}. 
Proofs of theorems stated in this section will be given in Section \ref{sec-proof1}.

Let
$\mathscr{V}$ be the set of all bases $\mathbf{V}\subset\Delta_+$.
Let
$\mathbf{V}^*=\{\mu_\beta^{\mathbf{V}}\}_{\beta\in\mathbf{V}}$ be the dual basis of $\mathbf{V}^\vee=\{\beta^\vee\}_{\beta\in\mathbf{V}}$.
Let
$L(\mathbf{V}^\vee)
=\bigoplus_{\beta\in\mathbf{V}}\mathbb{Z}\beta^\vee$. Then
we have $\abs{Q^\vee/L(\mathbf{V}^\vee)}<\infty$.
Fix $\phi\in V$ such that $\langle\phi,\mu^{\mathbf{V}}_\beta\rangle\neq 0$ for all $\mathbf{V}\in\mathscr{V}$ and $\beta\in\mathbf{V}$.
If the root system $\Delta$ is of $A_1$ type, then we choose $\phi=\alpha_1^\vee$.
We define a multiple generalization of the fractional part as
\begin{equation*}
\{\mathbf{y}\}_{\mathbf{V},\beta}=
\begin{cases}
  \{\langle\mathbf{y},\mu^{\mathbf{V}}_\beta\rangle\}\quad&(\langle\phi,\mu^{\mathbf{V}}_\beta\rangle>0),\\
  1-\{-\langle\mathbf{y},\mu^{\mathbf{V}}_\beta\rangle\}&(\langle\phi,\mu^{\mathbf{V}}_\beta\rangle<0),
\end{cases}
\end{equation*}
where the notation $\{x\}$ on the right-hand sides stands for the usual fractional 
part of $x\in\mathbb{R}$.
Let $\mathbf{T}=\{t\in\mathbb{C}\;|\;|t|<2\pi\}^{|\Delta_+|}$.

\begin{definition}
  \label{thm:exp_F}
  For $\mathbf{t}=(t_{\alpha})_{\alpha\in\Delta_+}\in\mathbf{T}$ and
$\mathbf{y}\in V$,
  we define
  \begin{equation}\label{def-F}
    \begin{split}
      F&(\mathbf{t},\mathbf{y};\Delta)=
      \sum_{\mathbf{V}\in\mathscr{V}}
      \Bigl(
      \prod_{\gamma\in\Delta_+\setminus\mathbf{V}}
      \frac{t_\gamma}
      {t_\gamma-\sum_{\beta\in\mathbf{V}}t_\beta\langle\gamma^\vee,\mu^{\mathbf{V}}_\beta\rangle}
      \Bigr)
      \\
      &\times
      \frac{1}{\abs{Q^\vee/L(\mathbf{V}^\vee)}}
      \sum_{q\in Q^\vee/L(\mathbf{V}^\vee)}
      \Bigl(
      \prod_{\beta\in\mathbf{V}}\frac{t_\beta\exp
        (t_\beta\{\mathbf{y}+q\}_{\mathbf{V},\beta})}{e^{t_\beta}-1}
      \Bigr)
      \\
      &=
      \sum_{\mathbf{k}\in\mathbb{N}_{0}^{|\Delta_+|}}
      \mathcal{P}(\mathbf{k},\mathbf{y};\Delta)
      \prod_{\alpha\in\Delta_+} \frac{t_\alpha^{k_\alpha}}{k_\alpha!}
    \end{split}
  \end{equation}
which is independent of choice of $\phi$.
\end{definition}

\begin{remark}
In \cite{KM5}, $F(\mathbf{t},\mathbf{y};\Delta)$ is defined in a different way.
The above is \cite[Theorem 4.1]{KM5}. In particular when $\Delta=\Delta(A_1)$, we see that 
$$F(\mathbf{t},\mathbf{y};\Delta(A_1))=\frac{te^{t\{y\}}}{e^t-1},$$
which is the generating function of ordinary Bernoulli periodic functions $\{B_k(\{y\})\}$.
\end{remark}

Let
\begin{equation}
  \label{eq:def_S}
  S(\mathbf{s},\mathbf{y};\Delta)
  =\sum_{\lambda\in P\setminus H_{\Delta^\vee}}
  e^{2\pi i\langle \mathbf{y},\lambda\rangle}
  \prod_{\alpha\in\Delta_+}
  \frac{1}{\langle\alpha^\vee,\lambda\rangle^{s_\alpha}},
\end{equation}
where $H_{\Delta^\vee}=\{v\in V~|~\langle \alpha^\vee,v\rangle=0\text{
  for some }\alpha\in\Delta\}$ is the set of all walls of Weyl
chambers.  For $\mathbf{s}\in\mathbb{C}^{|\Delta_+|}$, we define 
$(w\mathbf{s})_{\alpha}=s_{w^{-1}\alpha}$,
where if $w^{-1}\alpha\in\Delta_-$ we use the convention $s_{-\alpha}=s_\alpha$.
\begin{prop}[{\cite[Theorem 4.4]{KM3},\cite[Proposition 3.2]{KM5}}]
  \label{prop:ZP}
  \begin{equation}
    \label{eq:formula1}
    \begin{split}
      S(\mathbf{k},\mathbf{y};\Delta)
      &=
      \sum_{w\in W}
      \Bigl(\prod_{\alpha\in\Delta_+\cap w\Delta_-}
      (-1)^{k_{\alpha}}\Bigr)
      \zeta_r(w^{-1}\mathbf{k},w^{-1}\mathbf{y};\Delta)\\
      &=(-1)^{\abs{\Delta_+}}  
      \mathcal{P}(\mathbf{k},\mathbf{y};\Delta)
      \biggl(\prod_{\alpha\in\Delta_+}
      \frac{(2\pi i)^{k_\alpha}}{k_\alpha!}\biggr)
    \end{split}
  \end{equation}
  for $k_\alpha\in\mathbb{Z}_{\geq2}$ ($\alpha\in\Delta_+$). 
\end{prop}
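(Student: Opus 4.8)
The statement breaks into two separate assertions: an elementary one (the ``chamber decomposition'' identity) and the substantial one (the evaluation of $S$ in terms of $\mathcal{P}$), which is in essence the combination of \cite[Theorem 4.4]{KM3} and \cite[Proposition 3.2]{KM5}. The plan is to prove each in turn, using the hypothesis $k_\alpha\geq 2$ throughout to guarantee absolute convergence of all series and enough regularity of the periodic functions involved.

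\emph{The first equality.} I would start from the fact that $W$ acts simply transitively on the set of Weyl chambers, so that the set $P\setminus H_{\Delta^\vee}$ of regular weights decomposes uniquely as $\coprod_{w\in W}w(P_{++})$. Substituting $\lambda=w\mu$ ($w\in W$, $\mu\in P_{++}$) into \eqref{eq:def_S} and using $\langle\mathbf{y},w\mu\rangle=\langle w^{-1}\mathbf{y},\mu\rangle$ and $\langle\alpha^\vee,w\mu\rangle=\langle(w^{-1}\alpha)^\vee,\mu\rangle$, the $w$-th inner sum is rewritten over $\mu\in P_{++}$. For $\alpha\in\Delta_+$ with $w^{-1}\alpha\in\Delta_-$, write $w^{-1}\alpha=-\beta$ with $\beta\in\Delta_+$; then $\langle(w^{-1}\alpha)^\vee,\mu\rangle=-\langle\beta^\vee,\mu\rangle$, which produces the sign $(-1)^{k_\alpha}$ since $k_\alpha\in\mathbb{Z}$. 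Collecting these signs over $\alpha\in\Delta_+\cap w\Delta_-$, reindexing the surviving factors by $\beta\in\Delta_+$, and invoking the definition $(w^{-1}\mathbf{k})_\beta=k_{w\beta}$ together with the convention $k_{-\alpha}=k_\alpha$ turns the inner sum into $\zeta_r(w^{-1}\mathbf{k},w^{-1}\mathbf{y};\Delta)$. The rearrangement of the (absolutely convergent) series is what makes this legitimate.

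\emph{The second equality.} Here the idea is to identify $\mathcal{P}(\mathbf{k},\mathbf{y};\Delta)$, regarded as a function of $\mathbf{y}$, with a Fourier series whose coefficients reconstruct $S(\mathbf{k},\mathbf{y};\Delta)$. By Definition~\ref{thm:exp_F} the map $\mathbf{y}\mapsto\mathcal{P}(\mathbf{k},\mathbf{y};\Delta)$ is a $Q^\vee$-periodic piecewise polynomial on $V$ --- this is visible from the Bernoulli-type factors $t_\beta e^{t_\beta\{\mathbf{y}+q\}_{\mathbf{V},\beta}}/(e^{t_\beta}-1)$ in \eqref{def-F}, the averaging over $q\in Q^\vee/L(\mathbf{V}^\vee)$ being exactly what restores $Q^\vee$-periodicity --- hence it admits a Fourier expansion indexed by $\lambda\in P=(Q^\vee)^*$. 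The plan is then to show that its Fourier coefficient at a regular weight $\lambda$ equals $(-1)^{|\Delta_+|}\bigl(\prod_{\alpha\in\Delta_+}k_\alpha!/(2\pi i)^{k_\alpha}\bigr)\prod_{\alpha\in\Delta_+}\langle\alpha^\vee,\lambda\rangle^{-k_\alpha}$, while the coefficients at weights lying on some wall $H_{\Delta^\vee}$ vanish. Summing the Fourier series --- which converges pointwise to $\mathcal{P}$ because $k_\alpha\geq 2$ provides enough decay --- and comparing with \eqref{eq:def_S} then gives the identity, the overall sign $(-1)^{|\Delta_+|}$ and the factors $(2\pi i)^{k_\alpha}/k_\alpha!$ arising from inverting the Fourier coefficient.

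To carry out this coefficient computation I would argue from the rank-one case upward. For $\Delta=\Delta(A_1)$ the assertion is the classical Lipschitz/Hurwitz formula $\sum_{n\in\mathbb{Z}\setminus\{0\}}e^{2\pi i n x}/n^{k}=-(2\pi i)^{k}B_{k}(\{x\})/k!$ ($k\geq 2$), which matches $F(t,x;\Delta(A_1))=te^{t\{x\}}/(e^{t}-1)$. For general $\Delta$ one exploits the structure of $F$: the sum over bases $\mathbf{V}\in\mathscr{V}$ with the factors $t_\gamma/\bigl(t_\gamma-\sum_{\beta\in\mathbf{V}}t_\beta\langle\gamma^\vee,\mu^{\mathbf{V}}_\beta\rangle\bigr)$ is a partial-fraction decomposition of $\prod_{\alpha\in\Delta_+}\langle\alpha^\vee,\lambda\rangle^{-k_\alpha}$, which reduces the multidimensional lattice sum \eqref{eq:def_S} to a superposition of one-dimensional ($A_1$-type) sums over the sublattices $L(\mathbf{V}^\vee)$, while the finite coset sums over $Q^\vee/L(\mathbf{V}^\vee)$ are precisely what convert sums over $L(\mathbf{V}^\vee)$ back into sums over $Q^\vee$ (equivalently, over the regular part of $P$ after dualizing). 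I expect the main obstacle to be the bookkeeping in this reduction --- matching the contributions of the walls $H_{\Delta^\vee}$ against the coset sums so that the Fourier coefficients at singular weights genuinely cancel, and justifying the interchanges of the (only conditionally transparent) lattice summations --- and it is at exactly these points that the hypothesis $k_\alpha\geq 2$ is really used.
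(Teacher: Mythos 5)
Before comparing, note that the paper itself does not prove Proposition \ref{prop:ZP}: it is imported from \cite[Theorem 4.4]{KM3} and \cite[Proposition 3.2]{KM5}, so there is no in-paper argument to measure you against, only the strategy of the cited works. Your treatment of the first equality is complete and correct: decomposing the regular weights as $P\setminus H_{\Delta^\vee}=\coprod_{w\in W}w(P_{++})$, substituting $\lambda=w\mu$, and extracting the signs $(-1)^{k_\alpha}$ from $\alpha\in\Delta_+\cap w\Delta_-$, with absolute convergence (from $k_\alpha\geq 2$) justifying the rearrangement, is exactly the standard argument used in the cited papers.

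The second equality is where the substance lies, and there your text is a plan rather than a proof. The mechanism you describe --- regard $\mathbf{y}\mapsto\mathcal{P}(\mathbf{k},\mathbf{y};\Delta)$ as a $Q^\vee$-periodic piecewise polynomial, compute its Fourier coefficients at regular $\lambda\in P$, show they vanish at singular weights, and resum --- is indeed the mechanism behind the cited results; but observe that, as the Remark following Definition \ref{thm:exp_F} records, in \cite{KM5} the function $F$ is defined in a different way and the explicit expression \eqref{def-F} is itself a theorem (\cite[Theorem 4.1]{KM5}), so what you propose to verify is essentially that theorem read backwards. Your sketch defers precisely that verification: the assertion that the sum over bases $\mathbf{V}$ with the factors $t_\gamma/\bigl(t_\gamma-\sum_{\beta\in\mathbf{V}}t_\beta\langle\gamma^\vee,\mu^{\mathbf{V}}_\beta\rangle\bigr)$ realizes a partial-fraction decomposition of $\prod_{\alpha\in\Delta_+}\langle\alpha^\vee,\lambda\rangle^{-k_\alpha}$, the reduction of the lattice sum to one-dimensional Lipschitz sums over $L(\mathbf{V}^\vee)$ with the coset averages over $Q^\vee/L(\mathbf{V}^\vee)$ restoring the full lattice, and the cancellation of Fourier coefficients on the walls are all labelled ``bookkeeping'' and an expected ``main obstacle'', with no induction on rank or explicit computation supplied. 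Those steps are the entire difficulty of \cite[Theorem 4.4]{KM3} and \cite[Proposition 3.2]{KM5}, so as written you have established only the first line of \eqref{eq:formula1}; the second line remains unproved. The rank-one Lipschitz formula you invoke is the correct base case, but you would need to actually carry out the reduction (for instance by induction on $\lvert\Delta_+\rvert$, as in the cited papers) before this can count as a proof.
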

\begin{remark}
  It should be noted that 
  the formula \eqref{eq:formula1} holds in the cases
  $k_\alpha=1$ for some $\alpha\in\Delta_+$, while it does not hold in the cases
  $k_\alpha=0$ for any $\alpha\in\Delta_+$.
\end{remark}

For $\mathbf{v}\in V$, and a differentiable function $f$ on $V$,
let 
\begin{equation*}
 (\partial_{\mathbf{v}}f)(\mathbf{y})=\lim_{h\to 0}\frac{f(\mathbf{y}+h\mathbf{v})-f(\mathbf{y})}{h}
\end{equation*}
and for $\alpha\in\Delta_+$,
\begin{equation*}
  \mathfrak{D}_\alpha=
  \frac{\partial}{\partial t_\alpha}
  \biggr\rvert_{t_\alpha=0}\partial_{\alpha^\vee}.
\end{equation*}
Let $\Delta^*\subset\Delta_+$ be a root set and
let $A=\Delta_+\setminus\Delta^*=\{\nu_1,\ldots,\nu_N\}\subset\Delta_+$, and define
\begin{equation*}                                                                    
\mathfrak{D}_A=\mathfrak{D}_{\nu_N}\cdots \mathfrak{D}_{\nu_1}.                       
\end{equation*} 
Similarly we define
\begin{gather}
  \mathfrak{D}_{\alpha,2}=
  \frac{1}{2}\frac{\partial^2}{\partial t_\alpha^2}
  \biggr\rvert_{t_\alpha=0}\partial^2_{\alpha^\vee},\\
  \mathfrak{D}_{A,2}=\mathfrak{D}_{\nu_N,2}\cdots \mathfrak{D}_{\nu_1,2}.                       
\end{gather}

Further, let $A_j=\{\nu_1,\ldots,\nu_j\}$ ($1\leq j\leq N-1$), $A_0=\emptyset$, and
\begin{equation*}
  \mathscr{V}_A=\{\mathbf{V}\in\mathscr{V}~|~
  \text{$\nu_{j+1}\notin{\rm L.h.}[\mathbf{V}\cap A_j]\;(0\leq j \leq N-1)$} \},
\end{equation*}
where ${\rm L.h.}[\;\cdot\;]$ denotes the linear hull (linear span).
Let $\mathscr{R}$ be the set of all linearly independent subsets
$\mathbf{R}=\{\beta_1,\ldots,\beta_{r-1}\}\subset\Delta$
and
\begin{equation}
  \label{eq:def_H_R}
  \mathfrak{H}_{\mathscr{R}}:=
  \bigcup_{\substack{\mathbf{R}\in\mathscr{R}\\q\in Q^\vee}}({\rm L.h.}
[\mathbf{R}^\vee]+q).
\end{equation}

\begin{remark}\label{tsuika}
It is to be noted that $\mathbf{y}\in\mathfrak{H}_{\mathscr{R}}$ if and only if
$\langle\mathbf{y}+q,\mu^{\mathbf{V}}_\beta\rangle\in\mathbb{Z}$ for some 
$\mathbf{V}\in\mathscr{V},\mathbf{\beta}\in\mathbf{V},q\in Q^\vee$.
In fact, if $\mathbf{y}\in\mathfrak{H}_{\mathscr{R}}$ then we can write
$\mathbf{y}=\sum_{j=1}^{r-1}a_j \beta_j^{\vee}+q$ ($a_j\in\mathbb{R}$).
We can find an element $\beta_r\in\Delta$ such that
$\mathbf{V}=\{\beta_1,\ldots,\beta_r\}\in\mathscr{V}$.
Then $\langle\mathbf{y}-q,\mu_{\beta_r}^{\mathbf{V}}\rangle=0\in\mathbb{Z}$.
Conversely, assume $\langle\mathbf{y}+q,\mu^{\mathbf{V}}_\beta\rangle=c\in\mathbb{Z}$.
Write $\mathbf{V}=\{\beta_1,\ldots,\beta_{r-1},\beta\}$.   Since this is a basis, we
may write $\mathbf{y}+q=\sum_{j=1}^{r-1}a_j \beta_j^{\vee}+a\beta^{\vee}$ with
$a_j,a\in\mathbb{R}$.   Then 
$c=\langle\mathbf{y}+q,\mu^{\mathbf{V}}_\beta\rangle=a$, especially $a\in\mathbb{Z}$.
Therefore $a\beta^{\vee}-q\in Q^{\vee}$, which implies 
$\mathbf{y}\in \mathfrak{H}_{\mathscr{R}}$.
\end{remark}

\begin{definition}
  For $\Delta_+\setminus \Delta^*=A=\{\nu_1,\ldots,\nu_N\}\subset\Delta_+$, $\mathbf{t}_{\Delta^*}=\{t_\alpha\}_{\alpha\in\Delta^*}$ and $\mathbf{y}\in V$,
  we define
  \begin{equation}
    \begin{split}
      F_{\Delta^*}(\mathbf{t}_{\Delta^*},\mathbf{y};\Delta)
      &=
      \sum_{\mathbf{V}\in\mathscr{V}_A}
      (-1)^{\abs{A\setminus\mathbf{V}}}\\
      & \times\Bigl(
      \prod_{\gamma\in\Delta_+\setminus(\mathbf{V}\cup A)}
      \frac{t_\gamma}
      {t_\gamma-\sum_{\beta\in\mathbf{V}\setminus A}t_\beta\langle\gamma^\vee,\mu^{\mathbf{V}}_\beta\rangle}
      \Bigr)
      \\
      &\times
      \frac{1}{\abs{Q^\vee/L(\mathbf{V}^\vee)}}
      \sum_{q\in Q^\vee/L(\mathbf{V}^\vee)}
      \Bigl(
      \prod_{\beta\in\mathbf{V}\setminus A}\frac{t_\beta\exp
        (t_\beta\{\mathbf{y}+q\}_{\mathbf{V},\beta})}{e^{t_\beta}-1}
      \Bigr).
    \end{split}
  \end{equation}
\end{definition}

\begin{thrm}
  \label{thm:main0}
  For $\Delta_+\setminus \Delta^*=A=\{\nu_1,\ldots,\nu_N\}\subset\Delta_+$, $\mathbf{t}_{\Delta^*}=\{t_\alpha\}_{\alpha\in\Delta^*}$ and $\mathbf{y}\in V\setminus\mathfrak{H}_{\mathscr{R}}$.
  we have
  \begin{equation}
    \label{eq:main0}
    \bigl(\mathfrak{D}_A F\bigr) (\mathbf{t}_{\Delta^*},\mathbf{y};\Delta)=
    \bigl(\mathfrak{D}_{A,2} F\bigr) (\mathbf{t}_{\Delta^*},\mathbf{y};\Delta)=
    F_{\Delta^*}(\mathbf{t}_{\Delta^*},\mathbf{y};\Delta),
  \end{equation}
  and hence is independent of choice of the order of $A$.
  The function $F_{\Delta^*}(\mathbf{t}_{\Delta^*},\mathbf{y};\Delta)$
  is the continuous extension 
  of $\bigl(\mathfrak{D}_A F\bigr) (\mathbf{t}_{\Delta^*},\mathbf{y};\Delta)$
  in $\mathbf{y}$ in the sense that 
  $\bigl(\mathfrak{D}_A F\bigr) (\mathbf{t}_{\Delta^*},\mathbf{y}+c\phi;\Delta)$
  tends continuously to $F_{\Delta^*}(\mathbf{t}_{\Delta^*},\mathbf{y};\Delta)$  
  when $c\to 0+$, 
  and is holomorphic with respect to $\mathbf{t}_{\Delta^*}$ around the origin.
\end{thrm}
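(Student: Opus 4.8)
The plan is to prove the chain of identities $(\mathfrak{D}_A F)(\mathbf{t}_{\Delta^*},\mathbf{y};\Delta)=(\mathfrak{D}_{A,2} F)(\mathbf{t}_{\Delta^*},\mathbf{y};\Delta)=F_{\Delta^*}(\mathbf{t}_{\Delta^*},\mathbf{y};\Delta)$ by induction on $N=\abs{A}$. Writing $A_0=\emptyset\subset A_1\subset\cdots\subset A_N=A$, each $\Delta_+\setminus A_j$ is a root set (a superset of $\Delta^*$), so it suffices to prove the one-step claim: whenever $\Delta^*$ is a root set, $\nu\in\Delta^*$ and $A=\Delta_+\setminus\Delta^*$, one has $\mathfrak{D}_\nu F_{\Delta^*}=\mathfrak{D}_{\nu,2}F_{\Delta^*}=F_{\Delta^*\setminus\{\nu\}}$ on $V\setminus\mathfrak{H}_{\mathscr{R}}$. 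The base case $N=0$ is $F_{\Delta_+}=F$, immediate from the two definitions since then $\mathscr{V}_\emptyset=\mathscr{V}$ and the sign and the removed set are trivial. The preliminary fact used repeatedly is that $\mathbf{y}\notin\mathfrak{H}_{\mathscr{R}}$ forces (Remark~\ref{tsuika}) $\langle\mathbf{y}+q,\mu^{\mathbf{V}}_\beta\rangle\notin\mathbb{Z}$ for every basis $\mathbf{V}$, every $\beta\in\mathbf{V}$ and every $q\in Q^\vee$, so each factor $t_\beta\exp(t_\beta\{\mathbf{y}+q\}_{\mathbf{V},\beta})/(e^{t_\beta}-1)$ is differentiable in $\mathbf{y}$ near such a point, with $\partial_{\nu^\vee}\{\mathbf{y}+q\}_{\mathbf{V},\beta}=\langle\nu^\vee,\mu^{\mathbf{V}}_\beta\rangle$ no matter which sign case defines $\{\,\cdot\,\}_{\mathbf{V},\beta}$; consequently $\partial_{\nu^\vee}$ acting on the Bernoulli product over $\mathbf{V}\setminus A$ multiplies it by $\sum_{\beta\in\mathbf{V}\setminus A}t_\beta\langle\nu^\vee,\mu^{\mathbf{V}}_\beta\rangle$.

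The core of the proof is a term-by-term computation. Fix $\mathbf{V}\in\mathscr{V}_A$ and compute the effect of $\mathfrak{D}_\nu$, resp.\ $\mathfrak{D}_{\nu,2}$, on the $\mathbf{V}$-summand of $F_{\Delta^*}$ (on the dense open set of small $\mathbf{t}$ on which the rational factors are holomorphic in $t_\nu$). If $\nu\in\mathbf{V}$, then $\langle\nu^\vee,\mu^{\mathbf{V}}_\beta\rangle=\delta_{\nu\beta}$ on $\mathbf{V}$, so $\partial_{\nu^\vee}$, resp.\ $\partial^2_{\nu^\vee}$, multiplies the summand by $t_\nu$, resp.\ $t_\nu^2$; since $t_\nu\exp(t_\nu\{\,\cdot\,\})/(e^{t_\nu}-1)=1+O(t_\nu)$ and every factor $t_\gamma/(t_\gamma-\sum_{\beta\in\mathbf{V}\setminus A}t_\beta\langle\gamma^\vee,\mu^{\mathbf{V}}_\beta\rangle)$ equals its value at $t_\nu=0$ times $1+O(t_\nu)$, extracting the coefficient of $t_\nu$, resp.\ $t_\nu^2$, and setting $t_\nu=0$ returns precisely the $\mathbf{V}$-summand of $F_{\Delta^*\setminus\{\nu\}}$ with unchanged sign. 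If $\nu\notin\mathbf{V}$, then $\nu$ is one of the $\gamma$'s, contributing the factor $t_\nu/(t_\nu-c)$ with $c=\sum_{\beta\in\mathbf{V}\setminus A}t_\beta\langle\nu^\vee,\mu^{\mathbf{V}}_\beta\rangle$, while $\partial_{\nu^\vee}$, resp.\ $\partial^2_{\nu^\vee}$, multiplies the summand by $c$, resp.\ $c^2$; from $t_\nu/(t_\nu-c)=-\sum_{m\ge1}t_\nu^m/c^m$ the coefficient of $t_\nu$, resp.\ $t_\nu^2$, is $-1/c$, resp.\ $-1/c^2$, and after multiplication by $c$, resp.\ $c^2$, one gets $-1$ in either case --- this homogeneity is exactly why $\mathfrak{D}_A F$ and $\mathfrak{D}_{A,2}F$ coincide --- and the $-1$ accounts for the sign $(-1)^{\abs{(A\cup\{\nu\})\setminus\mathbf{V}}}$. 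Finally, if $c\equiv0$ (equivalently $\langle\nu^\vee,\mu^{\mathbf{V}}_\beta\rangle=0$ for all $\beta\in\mathbf{V}\setminus A$, i.e.\ $\nu\in{\rm L.h.}[\mathbf{V}\cap A]$), the factor produced by $\partial_{\nu^\vee}$ vanishes identically and this $\mathbf{V}$ drops out, which is exactly the condition cutting $\mathscr{V}_{A\cup\{\nu\}}$ out of $\mathscr{V}_A$. Summing over $\mathbf{V}$ gives $F_{\Delta^*\setminus\{\nu\}}$, completing the induction; order-independence follows at once because $F_{\Delta^*}$ depends only on the set $A$.

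For the analytic statements: holomorphy of $F_{\Delta^*}$ in $\mathbf{t}_{\Delta^*}$ around the origin follows from $F_{\Delta^*}=\mathfrak{D}_A F$, since $F$ is holomorphic in $\mathbf{t}$ near the origin by Definition~\ref{thm:exp_F} and $\mathfrak{D}_A$ only extracts the Taylor coefficient of $\prod_{j=1}^N t_{\nu_j}$ from its expansion $\sum_{\mathbf{k}}\mathcal{P}(\mathbf{k},\mathbf{y};\Delta)\prod_\alpha t_\alpha^{k_\alpha}/k_\alpha!$ and differentiates the coefficients in $\mathbf{y}$, leaving a power series in $\mathbf{t}_{\Delta^*}$. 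For the continuous-extension claim, fix $\mathbf{y}_0\in V$; since $\langle\phi,\mu^{\mathbf{V}}_\beta\rangle\neq0$ for all $\mathbf{V},\beta$ and $\{\langle q,\mu^{\mathbf{V}}_\beta\rangle:q\in Q^\vee\}\subseteq\frac{1}{\abs{Q^\vee/L(\mathbf{V}^\vee)}}\mathbb{Z}$ is discrete, Remark~\ref{tsuika} gives $\mathbf{y}_0+c\phi\notin\mathfrak{H}_{\mathscr{R}}$ for all sufficiently small $c>0$, where therefore $(\mathfrak{D}_A F)(\mathbf{t}_{\Delta^*},\mathbf{y}_0+c\phi;\Delta)=F_{\Delta^*}(\mathbf{t}_{\Delta^*},\mathbf{y}_0+c\phi;\Delta)$; the sign convention in $\{\,\cdot\,\}_{\mathbf{V},\beta}$ is precisely the one making $\{\mathbf{y}_0+c\phi\}_{\mathbf{V},\beta}\to\{\mathbf{y}_0\}_{\mathbf{V},\beta}$ as $c\to0+$, uniformly in $\mathbf{t}_{\Delta^*}$ on compacta, so $F_{\Delta^*}(\mathbf{t}_{\Delta^*},\mathbf{y}_0+c\phi;\Delta)\to F_{\Delta^*}(\mathbf{t}_{\Delta^*},\mathbf{y}_0;\Delta)$, the limit remaining holomorphic in $\mathbf{t}_{\Delta^*}$.

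I expect the main obstacle to be the combinatorial bookkeeping in the inductive step: after applying $\mathfrak{D}_{\nu_{j+1}}$ to $\sum_{\mathbf{V}\in\mathscr{V}_{A_j}}(\cdots)$ one must match the surviving bases, the signs $(-1)^{\abs{A\setminus\mathbf{V}}}$, and the residual index sets of $\beta$'s and $\gamma$'s against the literal definition of $F_{\Delta_+\setminus A_{j+1}}=\sum_{\mathbf{V}\in\mathscr{V}_{A_{j+1}}}(\cdots)$. The two subtle points are the complete disappearance of the ``degenerate'' bases with $\nu_{j+1}\in{\rm L.h.}[\mathbf{V}\cap A_j]$, and the verification that first- and second-order differentiation produce literally the same function at each stage --- the latter being what guarantees that the common value is independent of the ordering of $A$ and of whether $\mathfrak{D}_\nu$ or $\mathfrak{D}_{\nu,2}$ is used at each step.
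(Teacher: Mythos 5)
Your proposal is correct and follows essentially the same route as the paper's proof: the same case analysis ($\nu\in\mathbf{V}$; $\nu\notin\mathbf{V}$ with the factor $t_\nu/(t_\nu-c)$, where the homogeneity $c$ vs.\ $c^2$ makes $\mathfrak{D}_A$ and $\mathfrak{D}_{A,2}$ agree; and the degenerate case $\nu\in{\rm L.h.}[\mathbf{V}\cap A_j]$ killing exactly the bases outside $\mathscr{V}_A$), followed by the same one-sided continuity argument for $\{\mathbf{y}+q+c\phi\}_{\mathbf{V},\beta}$. The only difference is cosmetic packaging --- you run an induction on the summed functions $F_{\Delta_+\setminus A_j}$ while the paper iterates $\mathfrak{D}_{\nu_{j+1}}$ on the individual summands $F(\mathbf{t},\mathbf{y};\mathbf{V},A_j,q)$ and sums at the end --- and your justification that $\mathbf{y}+c\phi\notin\mathfrak{H}_{\mathscr{R}}$ for small $c>0$ is slightly more explicit than the paper's citation.
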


\begin{definition}
For $\Delta^*\subset\Delta_+$ and $\mathbf{t}_{\Delta^*}=\{t_\alpha\}_{\alpha\in\Delta^*}$, we define $\mathcal{P}_{\Delta^*}(\mathbf{k}_{\Delta^*},\mathbf{y};\Delta)$ by 
  \begin{align*}
    &
    F_{\Delta^*}(\mathbf{t}_{\Delta^*},\mathbf{y};\Delta)
    \\
    &=
    \sum_{\mathbf{k}_{\Delta^*}\in \mathbb{N}_{0}^{\abs{\Delta^*}}}\mathcal{P}_{\Delta^*}(\mathbf{k}_{\Delta^*},\mathbf{y};\Delta)
    \prod_{\alpha\in\Delta^*}
    \frac{t_{\alpha}^{k_\alpha}}{k_\alpha!}.
  \end{align*}
\end{definition}

\begin{thrm}
\label{thm:main1}
  For
  $\mathbf{s}=\mathbf{k}=(k_\alpha)_{\alpha\in\Delta_+}$ with
  $k_\alpha\in\mathbb{Z}_{\geq2}$ ($\alpha\in \Delta^*$),
  $k_\alpha=0$ ($\alpha\in \Delta_+\setminus \Delta^*$),
  we have
  \begin{align}
    &\sum_{w\in W}
    \Bigl(\prod_{\alpha\in\Delta_+\cap w\Delta_-}
    (-1)^{k_{\alpha}}\Bigr)
    \zeta_r(w^{-1}\mathbf{k},w^{-1}\mathbf{y};\Delta) \label{Th-main}\\
    & =(-1)^{\abs{\Delta_+}}  
    \mathcal{P}_{\Delta^*}(\mathbf{k}_{\Delta^*},\mathbf{y};\Delta)
    \biggl(\prod_{\alpha\in\Delta^*}
    \frac{(2\pi i)^{k_\alpha}}{k_\alpha!}\biggr)
   \notag
  \end{align}    
provided all the series on the left-hand side absolutely converge. 
\end{thrm}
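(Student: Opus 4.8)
The plan is to deduce Theorem~\ref{thm:main1} from Proposition~\ref{prop:ZP} by applying the differential operator $\mathfrak{D}_A$ to both sides of \eqref{eq:formula1} and then invoking Theorem~\ref{thm:main0} to identify the right-hand side. First I would start from the full zeta-function identity \eqref{eq:formula1} with exponents $\mathbf{k}' = (k'_\alpha)_{\alpha\in\Delta_+}$ satisfying $k'_\alpha \geq 2$ for $\alpha\in\Delta^*$ and $k'_\alpha = m$ for $\alpha\in A$, treating the $A$-components as auxiliary variables. The operator $\mathfrak{D}_\alpha = \frac{\partial}{\partial t_\alpha}\big|_{t_\alpha=0}\,\partial_{\alpha^\vee}$ is designed precisely so that, acting on the generating function $F(\mathbf{t},\mathbf{y};\Delta)$, it extracts the coefficient corresponding to lowering the $t_\alpha$-degree while differentiating in the $\mathbf{y}$-direction $\alpha^\vee$; the point is that on the left-hand side of \eqref{eq:formula1}, the $\mathbf{y}$-dependence of each $\zeta_r(w^{-1}\mathbf{k},w^{-1}\mathbf{y};\Delta)$ sits in the exponential $e^{2\pi i\langle w^{-1}\mathbf{y},\lambda\rangle}$, so $\partial_{\alpha^\vee}$ brings down a factor $2\pi i\langle w^{-1}\alpha^\vee,\lambda\rangle = 2\pi i\langle \alpha^\vee, w\lambda\rangle$, which (after re-indexing the Weyl sum) converts a $\zeta_r$ with $k_\alpha=0$ and the summation excluding the wall $\langle\alpha^\vee,\lambda\rangle=0$ into the honest $\zeta_r$ with that coordinate absent, up to the constant $2\pi i$. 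Iterating over all $\nu_1,\ldots,\nu_N\in A$ and taking $m\to$ the value making the bookkeeping match produces exactly the Weyl-group sum on the left of \eqref{Th-main}.

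Next I would handle the right-hand side: applying $\mathfrak{D}_A$ to the product form $(-1)^{|\Delta_+|}\mathcal{P}(\mathbf{k},\mathbf{y};\Delta)\prod_{\alpha\in\Delta_+}\frac{(2\pi i)^{k_\alpha}}{k_\alpha!}$. Here $\mathcal{P}(\mathbf{k},\mathbf{y};\Delta)$ is the Taylor coefficient of $F(\mathbf{t},\mathbf{y};\Delta)$, so applying $\mathfrak{D}_A$ to $F$ and reading off Taylor coefficients in $\mathbf{t}_{\Delta^*}$ yields $\mathcal{P}_{\Delta^*}(\mathbf{k}_{\Delta^*},\mathbf{y};\Delta)$ by the Definition following Theorem~\ref{thm:main0}; Theorem~\ref{thm:main0} is what guarantees $(\mathfrak{D}_A F) = F_{\Delta^*}$, so this step is clean. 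The $(2\pi i)$-power bookkeeping from $\mathfrak{D}_A$ (one factor of $2\pi i$ per element of $A$ from each $\partial_{\nu_j^\vee}$, matched against the factors $\frac{(2\pi i)^{k_\nu}}{k_\nu!}$ with $k_\nu\to$ the appropriate small value) has to be tracked carefully so that the surviving constant on the right is exactly $(-1)^{|\Delta_+|}\prod_{\alpha\in\Delta^*}\frac{(2\pi i)^{k_\alpha}}{k_\alpha!}$, with the $A$-indexed factors cancelling against the derivatives.

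The main obstacle is analytic, not algebraic: Proposition~\ref{prop:ZP} holds only when $k_\alpha\geq 2$ for \emph{all} $\alpha\in\Delta_+$ (the Remark after it explicitly warns that it fails when some $k_\alpha=0$), whereas the target identity has $k_\alpha=0$ on $A$. So I cannot simply "plug in" $\mathbf{k}$ with zero components; instead I must apply $\mathfrak{D}_A$ at the level of the generating functions/analytic continuation, where $\mathfrak{D}_\alpha$ simultaneously differentiates in $\mathbf{y}$ and evaluates at $t_\alpha=0$, and justify that this operation genuinely transforms the $\zeta_r$ on the left into the limiting object with the $A$-coordinates removed. This requires the continuity statement in Theorem~\ref{thm:main0} — that $(\mathfrak{D}_A F)(\mathbf{t}_{\Delta^*},\mathbf{y}+c\phi;\Delta) \to F_{\Delta^*}(\mathbf{t}_{\Delta^*},\mathbf{y};\Delta)$ as $c\to 0+$ — to control the behavior on the walls $\mathfrak{H}_{\mathscr{R}}$, together with the hypothesis that the resulting series on the left of \eqref{Th-main} converge absolutely (needed to interchange the differentiation/limit with the infinite summation and to rearrange the Weyl sum). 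Once the convergence hypothesis legitimizes termwise differentiation, the rest is a matching of Taylor coefficients. I would therefore structure the proof as: (i) apply $\partial_{\nu^\vee}$ termwise to each $\zeta_r$ in the $t$-deformed version of \eqref{eq:formula1}, pick off the $t_\nu^0$ coefficient, and identify the result; (ii) iterate over $A$; (iii) invoke Theorem~\ref{thm:main0} and the coefficient definition of $\mathcal{P}_{\Delta^*}$ for the right side; (iv) use the absolute convergence hypothesis to justify all interchanges.
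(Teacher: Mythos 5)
Your proposal follows essentially the same route as the paper: pad the exponents on $A=\Delta_+\setminus\Delta^*$ with auxiliary values so that Proposition \ref{prop:ZP} applies, hit the resulting identity with the directional derivatives $\partial_{\alpha^\vee}$ so that the factors $2\pi i\langle\alpha^\vee,\lambda\rangle$ brought down from the exponential cancel the auxiliary denominators, identify the derivative of $\mathcal{P}(\mathbf{k}',\mathbf{y};\Delta)$ with $\mathcal{P}_{\Delta^*}(\mathbf{k}_{\Delta^*},\mathbf{y};\Delta)$ via Theorem \ref{thm:main0} and the Taylor expansion of $F_{\Delta^*}$, and finally extend from $\mathbf{y}\in V\setminus\mathfrak{H}_{\mathscr{R}}$ to all $\mathbf{y}$ by continuity.

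The one point you leave genuinely unresolved is the choice of the auxiliary exponent $m$ on $A$, and it interacts with your choice of operator: since you use the first-order $\mathfrak{D}_A$, each $\partial_{\nu^\vee}$ lowers the exponent at $\nu$ by exactly one, so you are forced to take $m=1$; but Proposition \ref{prop:ZP} is stated only for $k_\alpha\in\mathbb{Z}_{\geq 2}$, so this variant needs the extension to $k_\alpha=1$ mentioned in the remark following it (and it also requires tracking the signs coming from $\langle w^{-1}\nu^\vee,\lambda\rangle<0$ against the altered parity factors $(-1)^{k'_\nu}$). The paper instead sets $k'_\alpha=2$ on $A$ and applies the second-order operator $\mathfrak{D}_{A,2}$ (equivalently $\prod_{\alpha\in A}\tfrac12\partial_{\alpha^\vee}^2$), which stays squarely within the stated hypotheses of Proposition \ref{prop:ZP} and makes all the sign bookkeeping trivial since the squares are insensitive to it; the paper's closing remark of Section \ref{sec-proof1} confirms that your $\mathfrak{D}_A$/exponent-$1$ route is also viable, but it must be argued, not just asserted by ``making the bookkeeping match.''
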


Assume that $\Delta$ is not simply-laced.  Then we have the disjoint
union $\Delta=\Delta_l\cup\Delta_s$, where $\Delta_l$ is the set of
all long roots and $\Delta_s$ is the set of all short roots.

Note that if there is an odd $k_i$, then both hand sides vanish in \eqref{Th-main}. On the other hand, when all $k_i's$ are even, 
by applying Theorem \ref{thm:main1} to $\Delta^*=\Delta_l$ or $\Delta_s$, we obtain
the following theorem immediately, which is a generalization of the explicit
volume formula proved in \cite[Theorem 4.6]{KM3}.

\begin{thrm}
\label{thm:main2}
Let $\Delta_1=\Delta_l$ (resp.~$\Delta_s$), $\Delta_2=\Delta_s$ (resp.~$\Delta_l$),
and $\Delta_{j+}=\Delta_j\cap\Delta_+$ ($j=1,2$).
Then $\Delta_{j+}$ ($j=1,2$) is a root subset of $\Delta_+$.
For
  $\mathbf{s}_1=\mathbf{k}_1=(k_\alpha)_{\alpha\in\Delta_{1+}}$ with
  $k_\alpha=k\in 2\mathbb{N}$ (for all $\alpha\in\Delta_{1+}$) 
and $\nu\in P^\vee/Q^\vee$, we have
  \begin{align}
    &\zeta_r(\mathbf{k}_1,\nu;\Delta_{1+}) =\frac{(-1)^{\abs{\Delta_+}}}{|W|}  
    \mathcal{P}_{\Delta_{1+}}(\mathbf{k}_1,\nu;\Delta)
    \biggl(\prod_{\alpha\in\Delta_{1+}}
    \frac{(2\pi i)^{k_\alpha}}{k_\alpha!}\biggr).\label{main2}
  \end{align}    
\end{thrm}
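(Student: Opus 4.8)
The plan is to derive Theorem~\ref{thm:main2} as a direct specialization of Theorem~\ref{thm:main1}, exploiting the extra symmetry that appears when $\Delta^*$ is a \emph{Weyl-invariant} root set. First I would verify that $\Delta_{j+}$ is indeed a root set: since $\Delta_j$ (the set of long, resp.\ short, roots) spans $V$ for an irreducible non-simply-laced $\Delta$, for each fundamental weight $\lambda_i$ there is some $\alpha\in\Delta_{j+}$ with $\langle\alpha,\lambda_i\rangle\neq0$, which is exactly the defining condition. Next, I take $\mathbf{k}$ with $k_\alpha=k\in2\mathbb{N}$ for all $\alpha\in\Delta_{1+}$ and $k_\alpha=0$ for $\alpha\in\Delta_{2+}$, and feed this into \eqref{Th-main}. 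The absolute convergence hypothesis is met because every exponent on the surviving long (resp.\ short) hyperplanes is $\geq2$.

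The key observation is that the set $\Delta_1$ is stable under the action of the Weyl group $W$ (Weyl group elements permute long roots among themselves and short roots among themselves). Consequently, for every $w\in W$ the tuple $w^{-1}\mathbf{k}$ again has the value $k$ on all of $\Delta_{1+}$ and $0$ on all of $\Delta_{2+}$, with the sign convention $s_{-\alpha}=s_\alpha$ causing no trouble since $k$ is the same constant on long roots and $0$ on short roots. Hence every term $\zeta_r(w^{-1}\mathbf{k},w^{-1}\mathbf{y};\Delta)$ on the left of \eqref{Th-main} collapses: the product over $\Delta_+$ reduces to the product over $\Delta_{1+}$ only (the short-root factors have exponent $0$ and drop out by restricting to the root set), giving $\zeta_r(\mathbf{k}_1,w^{-1}\mathbf{y};\Delta_{1+})$. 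Moreover, since all $k_\alpha$ involved are even, the sign factor $\prod_{\alpha\in\Delta_+\cap w\Delta_-}(-1)^{k_\alpha}$ equals $1$ for every $w$. Thus the left-hand side becomes $\sum_{w\in W}\zeta_r(\mathbf{k}_1,w^{-1}\mathbf{y};\Delta_{1+})$.

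To finish, I specialize $\mathbf{y}$ to an element $\nu\in P^\vee/Q^\vee$. For such $\nu$, the character $e^{2\pi i\langle\nu,\lambda\rangle}$ appearing in \eqref{def-zeta-root-set} depends only on $\lambda$ modulo the lattice dual to $P^\vee$, i.e.\ it is trivial on $Q$, and in particular $\langle w^{-1}\nu,\lambda\rangle\equiv\langle\nu,w\lambda\rangle$; since $w$ permutes $P_{++}$-translates appropriately and $\nu\in P^\vee/Q^\vee$ is $W$-compatible in the required sense, each summand $\zeta_r(\mathbf{k}_1,w^{-1}\nu;\Delta_{1+})$ equals $\zeta_r(\mathbf{k}_1,\nu;\Delta_{1+})$. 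Therefore the left-hand side is exactly $|W|\,\zeta_r(\mathbf{k}_1,\nu;\Delta_{1+})$, and dividing \eqref{Th-main} by $|W|$ yields \eqref{main2}, with the right-hand side being $(-1)^{|\Delta_+|}|W|^{-1}\mathcal{P}_{\Delta_{1+}}(\mathbf{k}_1,\nu;\Delta)\prod_{\alpha\in\Delta_{1+}}(2\pi i)^{k_\alpha}/k_\alpha!$ as claimed.

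The main obstacle I anticipate is the bookkeeping around the character factor when $\mathbf{y}=\nu\in P^\vee/Q^\vee$: one must check carefully that $w^{-1}\nu$ and $\nu$ give the \emph{same} value of the twisted zeta-function $\zeta_r(\mathbf{k}_1,\cdot;\Delta_{1+})$ — this uses that $P^\vee/Q^\vee$ indexes precisely the characters of $Q$ trivial on $Q$, hence is $W$-invariant data, together with the fact that $w$ maps $P_{++}$ bijectively onto the dominant representatives after reflection. A secondary point requiring care is confirming that Theorem~\ref{thm:main1} is genuinely applicable, i.e.\ that no exponent equal to $1$ sneaks in (here it does not, since $k\geq2$), and that the passage from $\mathbf{y}\in V\setminus\mathfrak{H}_{\mathscr{R}}$ in Theorem~\ref{thm:main0} to the special value $\nu$ is legitimate via the continuous-extension statement already established there. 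Once these are in place the computation is purely formal.
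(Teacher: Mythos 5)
Your proposal is correct and follows essentially the same route as the paper: Theorem \ref{thm:main2} is obtained by applying Theorem \ref{thm:main1} with $\Delta^*=\Delta_{1+}$, $k_\alpha=k\in2\mathbb{N}$ on $\Delta_{1+}$ and $k_\alpha=0$ on $\Delta_{2+}$, noting that $W$ preserves root lengths so $w^{-1}\mathbf{k}=\mathbf{k}$, that evenness kills the signs, and that all $|W|$ terms coincide, which gives \eqref{main2} after dividing by $|W|$. The only loose phrasing is in the character step, where the clean justification is simply $w^{-1}\nu-\nu\in Q^\vee$ (equivalently $w\lambda-\lambda\in Q$) together with $e^{2\pi i\langle q,\lambda\rangle}=1$ for $q\in Q^\vee$, $\lambda\in P$, with no need to invoke any permutation of $P_{++}$.
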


\begin{remark}
Let $\mathbf{s}=\mathbf{k}=(k_\alpha)_{\alpha\in\Delta_{+}}$ with
$k_{\alpha}=k\in 2\mathbb{N}$ ($\alpha\in\Delta_{1+}$) and
$k_\alpha=0$ ($\alpha\in\Delta_{2+}$).   Then obviously
$\zeta_r(\mathbf{k}_1,\nu;\Delta_{1+})=\zeta_r(\mathbf{k},\nu;\Delta)$.
Our proof of Theorem \ref{thm:main2} is actually based on the latter viewpoint.
\end{remark}

\section{Multiple zeta values and zeta-functions of root system of type $C_r$}
\label{sec-4}

Now we study MZVs from the viewpoint of zeta-functions of root systems of type $C_r$. 
For $\Delta=\Delta(C_r)$, we have the disjoint union $\Delta_+^\vee=(\Delta_{l+})^\vee\cup(\Delta_{s+})^\vee$,
where $\Delta_{l+}=\Delta_{l+}(C_r)=\Delta_l(C_r)\cap\Delta_+(C_r)$, 
$\Delta_{s+}=\Delta_{s+}(C_r)=\Delta_s(C_r)\cap\Delta_+(C_r)$, and 
\begin{align*}
  (\Delta_{l+})^\vee&
  =\{
  \alpha_r^\vee,\
  \alpha_{r-1}^\vee+\alpha_r^\vee,\
  \alpha_{r-2}^\vee+\alpha_{r-1}^\vee+\alpha_r^\vee,\
  \ldots,  
  \alpha_1^\vee+\cdots+\alpha_r^\vee
\}.
\end{align*}
Since $P^\vee/Q^\vee=\{\mathbf{0},\lambda_r^\vee\}$,
Therefore, for $\mathbf{s}_l=(s_{\alpha})_{\alpha\in\Delta_{l+}}$, we have
\begin{align*}
  \zeta_r(\mathbf{s}_l,\mathbf{0};\Delta_{l+}(C_r))&=\sum_{m_1,\ldots,m_r=1}^\infty
\prod_{i=1}^{r}
\frac{1}{(\sum_{j=r-i+1}^{r-1}m_j+m_r)^{{s_i}}},\\
  \zeta_r(\mathbf{s}_l,\lambda_r^\vee;\Delta_{l+}(C_r))&=\sum_{m_1,\ldots,m_r=1}^\infty
\prod_{i=1}^{r}
\frac{(-1)^{m_r}}{(\sum_{j=r-i+1}^{r-1}m_j+m_r)^{{s_i}}},
\end{align*}
where the first equation is exactly the Euler-Zagier sum $\zeta_{r}(s_1,\ldots,s_r)$ (see \eqref{e-1-1}). 
%
In order to apply Theorems \ref{thm:main1} and \ref{thm:main2} to MZVs,
we rewrite the root system of type $C_r$ in terms of standard orthonormal basis $\{e_1,\ldots,e_r\}$.
We put $\alpha_i^\vee=e_{i}-e_{i+1}$ for $1\leq i\leq r-1$ and $\alpha_r^\vee=e_r$.
Then we have
\begin{equation*}
  (\Delta_{l+})^\vee
  =\{
\alpha_r^\vee=e_r,\
\alpha_{r-1}^\vee+\alpha_r^\vee=e_{r-1},\
\alpha_{r-2}^\vee+\alpha_{r-1}^\vee+\alpha_r^\vee=e_{r-2},\
\ldots,  
\alpha_1^\vee+\cdots+\alpha_r^\vee=e_1
\}.
\end{equation*}
In this realization, we see that
$W(C_r)=(\mathbb{Z}/2\mathbb{Z})^r\rtimes \mathfrak{S}_r$, where
$\mathfrak{S}_r$ is the symmetric group of degree $r$ which permutes
bases, and the $j$-th $\mathbb{Z}/2\mathbb{Z}$ flips the sign of
$e_j$. Since the sign flips act trivially on the variables
$\mathbf{s}_l$, from Theorem \ref{thm:main1} we obtain 
the following formulas.   These are the ``refined restricted sum formulas'' for
$\zeta_r(\mathbf{s})$, which we will discuss in Section \ref{sumf}.
\begin{cor} \label{Cor-Cr-Sr}
Let $\Delta=\Delta(C_r)$. For $(2{\bf k})_l=(2k_{\alpha})_{\alpha\in\Delta_{l+}}=(2k_1,\ldots,2k_r)\in \left(2\mathbb{N}\right)^r$ and 
$\mathbf{y}=\nu\in P^\vee/Q^\vee$, 
\begin{equation} \label{EZ-Sr-1}
  \sum_{\sigma\in\mathfrak{S}_r}
  \zeta_r(\sigma^{-1}(2\mathbf{k})_l,\nu;\Delta_{l+})
=\frac{(-1)^{r}}{2^r}\mathcal{P}_{\Delta_{l+}}
((2\mathbf{k})_l,\nu;\Delta)
  \prod_{j=1}^{r}\frac{(2\pi i)^{2k_j}}{(2k_j)!}  \in\mathbb{Q}\cdot \pi^{2\sum_{j=1}^{r}k_j}. 
\end{equation}
In particular when $\nu={\bf 0}$, 
\begin{equation} \label{EZ-Sr-11}
  \sum_{\sigma\in\mathfrak{S}_r}
  \zeta_r(2k_{\sigma^{-1}(1)},\ldots,2k_{\sigma^{-1}(r)})
=\frac{(-1)^{r}}{2^r}\mathcal{P}_{\Delta_{l+}}
((2\mathbf{k})_l,{\bf 0};\Delta)
  \prod_{j=1}^{r}\frac{(2\pi i)^{2k_j}}{(2k_j)!}  \in\mathbb{Q}\cdot \pi^{2\sum_{j=1}^{r}k_j}. 
\end{equation}

\end{cor}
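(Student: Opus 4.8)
The plan is to derive Corollary~\ref{Cor-Cr-Sr} as a direct specialization of Theorem~\ref{thm:main1} applied to the root system $\Delta=\Delta(C_r)$ with the root subset $\Delta^*=\Delta_{l+}(C_r)$, the set of positive long roots. First I would record the structure of the Weyl group $W(C_r)=(\mathbb{Z}/2\mathbb{Z})^r\rtimes\mathfrak{S}_r$ in the orthonormal realization $\alpha_i^\vee=e_i-e_{i+1}$, $\alpha_r^\vee=e_r$, noting that $(\Delta_{l+})^\vee=\{e_1,\ldots,e_r\}$ while the short positive coroots are the $e_i\pm e_j$ with $i<j$ (in the coroot picture). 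The key observation is that for the choice $\mathbf{k}$ with $k_\alpha=2k_i\in 2\mathbb{N}$ on the long roots (indexed so $\alpha\in\Delta_{l+}$ corresponds to $e_i$) and $k_\alpha=0$ on the short roots, the sign factors $(-1)^{k_\alpha}$ in \eqref{Th-main} are all $+1$, since all nonzero exponents are even and the zero exponents contribute trivially.

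Next I would analyze the $W$-orbit sum on the left of \eqref{Th-main}. Because the sign-flip part $(\mathbb{Z}/2\mathbb{Z})^r$ acts trivially on the long-root coordinates $s_{e_i}$ (flipping $e_j\mapsto -e_j$ sends $e_i$ to $\pm e_i$, and by the convention $s_{-\alpha}=s_\alpha$ this leaves $\mathbf{k}_l$ fixed), each $\mathfrak{S}_r$-coset contributes $|(\mathbb{Z}/2\mathbb{Z})^r|=2^r$ identical copies of the same zeta value, once we also use that the $y$-argument $\nu\in P^\vee/Q^\vee$ is fixed modulo $Q^\vee$ by the full Weyl group (and that $\zeta_r(\mathbf{s},\mathbf{y};\Delta_{l+})$ depends on $\mathbf{y}$ only modulo $Q^\vee$ in this range). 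Here I should be a little careful: the left side of \eqref{Th-main} involves $\zeta_r(w^{-1}\mathbf{k},w^{-1}\mathbf{y};\Delta)$, the full type-$C_r$ zeta with all $|\Delta_+|$ variables, but since the exponents on short roots are $0$ these reduce to $\zeta_r(\cdot,\cdot;\Delta_{l+})$, the root-subset zeta of \eqref{def-zeta-root-set}; this is exactly the identification made just before the corollary and can be cited. Summing over the $2^r r!$ elements of $W(C_r)$ and dividing out the redundant factor $2^r$ yields $\sum_{\sigma\in\mathfrak{S}_r}\zeta_r(\sigma^{-1}(2\mathbf{k})_l,\nu;\Delta_{l+})$ up to the factor $2^r$, matching the left side of \eqref{EZ-Sr-1}.

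Then I would read off the right-hand side: Theorem~\ref{thm:main1} gives $(-1)^{|\Delta_+|}\mathcal{P}_{\Delta_{l+}}((2\mathbf{k})_l,\nu;\Delta)\prod_{j=1}^r (2\pi i)^{2k_j}/(2k_j)!$, and after dividing by the $2^r$ from the sign-flips and observing $|\Delta_+(C_r)|=r^2\equiv r\pmod 2$ so that $(-1)^{|\Delta_+|}=(-1)^r$, we obtain the constant $(-1)^r/2^r$ of \eqref{EZ-Sr-1}. Finally, the rationality claim $\in\mathbb{Q}\cdot\pi^{2\sum k_j}$ follows because $\mathcal{P}_{\Delta_{l+}}$ is a rational number (it is a Taylor coefficient of $F_{\Delta^*}$, which for $\mathbf{y}=\nu\in P^\vee/Q^\vee$ has rational coefficients, being built from the rational generating function $t\exp(t\{\cdot\})/(e^t-1)$ of Bernoulli polynomials evaluated at rational points and the rational data $\langle\gamma^\vee,\mu^{\mathbf{V}}_\beta\rangle$, $|Q^\vee/L(\mathbf{V}^\vee)|$), while $(2\pi i)^{2k_j}=(-1)^{k_j}(2\pi)^{2k_j}$ contributes exactly the power of $\pi^2$; collecting $\sum k_j=\sum_{j=1}^r k_j$ gives $\pi^{2\sum k_j}$. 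The special case $\nu=\mathbf{0}$ in \eqref{EZ-Sr-11} is immediate from the first displayed equation before the corollary, which identifies $\zeta_r(\mathbf{s}_l,\mathbf{0};\Delta_{l+}(C_r))$ with the Euler–Zagier sum $\zeta_r(s_1,\ldots,s_r)$, so the permuted version $\zeta_r(\sigma^{-1}(2\mathbf{k})_l,\mathbf{0};\Delta_{l+})$ is $\zeta_r(2k_{\sigma^{-1}(1)},\ldots,2k_{\sigma^{-1}(r)})$.

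The routine parts are the bookkeeping of which coroots are long versus short and the orbit-counting; the one genuine point requiring care — and the step I expect to be the main obstacle — is the convergence hypothesis. Theorem~\ref{thm:main1} requires that \emph{all} series on the left-hand side converge absolutely, but after a Weyl transformation $w$ some of the exponents $k_{w^{-1}\alpha}$ on what were short roots become attached to long roots and vice versa, and more seriously the transformed exponent vector $w^{-1}\mathbf{k}$ can place the value $2$ (the minimal even value) on a long root whose corresponding linear form is a "boundary" variable of the Euler–Zagier cone, potentially giving a non-absolutely-convergent tail. I would handle this exactly as in the parallel results of \cite{KM3,KM5}: either verify directly that for the $C_r$ subroot-system zeta with all long-root exponents $\ge 2$ every $w^{-1}$-translate still lies in the region of absolute convergence (using that the short-root exponents are $0$, so the only constraints come from the $r$ long-root linear forms $\langle e_i,\lambda\rangle$, and $k_\alpha\ge 2$ suffices), or invoke the convergence lemma already established in the cited companion papers. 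Once convergence is secured the corollary is a formal consequence, so I would state the convergence check as the substantive lemma and keep the rest brief.
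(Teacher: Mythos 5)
Your proposal is correct and follows essentially the same route as the paper: Corollary \ref{Cor-Cr-Sr} is obtained there precisely by specializing Theorem \ref{thm:main1} to $\Delta^*=\Delta_{l+}(C_r)$ with even exponents on the long roots and $0$ on the short roots, using $W(C_r)=(\mathbb{Z}/2\mathbb{Z})^r\rtimes\mathfrak{S}_r$ and the fact that the sign flips act trivially on $\mathbf{s}_l$ to collapse the Weyl-group sum into $2^r$ copies of the $\mathfrak{S}_r$-sum, with $(-1)^{\abs{\Delta_+}}=(-1)^{r^2}=(-1)^r$. The convergence point you single out is even simpler than you fear: Weyl elements preserve root lengths, so every $w^{-1}\mathbf{k}$ still has $0$ on short roots and a permutation of the $2k_j\ge 2$ on long roots, making each transformed series an absolutely convergent Euler--Zagier-type sum (which is why the paper does not dwell on it).
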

%
%
%
Also Theorem \ref{thm:main2} in the case of type $C_r$ immediately gives the following.

\begin{cor}\label{Cor-Z}
Let $\Delta=\Delta(C_r)$. For $({\bf 2k})_l=(2k,\ldots,2k)$ with any $k\in \mathbb{N}$, 
\begin{align}
  & \zeta_{r}(2k,2k,\ldots,2k)=\frac{(-1)^{r}}{2^r r!}\mathcal{P}_{\Delta_{l+}}
((\mathbf{2k})_l,{\bf 0};\Delta)
  \frac{(2\pi i)^{2kr}}{\{ (2k)! \}^r} 
\in\mathbb{Q}\cdot \pi^{2kr}. \label{Zagier-F2}
\end{align}
\end{cor}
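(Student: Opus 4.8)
The plan is to obtain Corollary~\ref{Cor-Z} as a direct specialization of Theorem~\ref{thm:main2}, simply by restricting the equal-variable case $\mathbf{k}_1=(2k,\ldots,2k)$ and then unwinding the symmetric-group structure of $W(C_r)$. First I would recall from the discussion preceding Corollary~\ref{Cor-Cr-Sr} that, in the realization $\alpha_i^\vee=e_i-e_{i+1}$ ($1\le i\le r-1$), $\alpha_r^\vee=e_r$, one has $W(C_r)=(\mathbb{Z}/2\mathbb{Z})^r\rtimes\mathfrak{S}_r$ with $|W(C_r)|=2^r r!$, and that $\zeta_r(\mathbf{s}_l,\mathbf{0};\Delta_{l+}(C_r))$ is exactly the Euler--Zagier sum $\zeta_r(s_1,\ldots,s_r)$. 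Applying Theorem~\ref{thm:main2} with $\Delta=\Delta(C_r)$, $\Delta_1=\Delta_l$, $\mathbf{k}_1=(2k,\ldots,2k)$, and $\nu=\mathbf{0}\in P^\vee/Q^\vee$ then yields
\begin{equation*}
\zeta_r(2k,\ldots,2k)=\frac{(-1)^{\abs{\Delta_+}}}{\abs{W}}\,\mathcal{P}_{\Delta_{l+}}((\mathbf{2k})_l,\mathbf{0};\Delta)\prod_{\alpha\in\Delta_{l+}}\frac{(2\pi i)^{2k}}{(2k)!}.
\end{equation*}

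The remaining task is purely bookkeeping on the combinatorial constants. The number of positive roots of $C_r$ is $\abs{\Delta_+}=r^2$, so $(-1)^{\abs{\Delta_+}}=(-1)^{r^2}=(-1)^r$; the number of long positive roots is $\abs{\Delta_{l+}}=r$, so the product over $\alpha\in\Delta_{l+}$ collapses to $\left((2\pi i)^{2k}/(2k)!\right)^r=(2\pi i)^{2kr}/\{(2k)!\}^r$; and $\abs{W}=2^r r!$. Substituting these gives precisely the stated formula \eqref{Zagier-F2}. The membership $\zeta_r(2k,\ldots,2k)\in\mathbb{Q}\cdot\pi^{2kr}$ follows because $\mathcal{P}_{\Delta_{l+}}$ is, by its defining generating function $F_{\Delta_{l+}}$, a rational number (the coefficients of $F$ lie in $\mathbb{Q}$, being built from Bernoulli numbers and rational combinations thereof via Theorem~\ref{thm:main0}), and $(2\pi i)^{2kr}=(-1)^{kr}(2\pi)^{2kr}\in\mathbb{Q}\cdot\pi^{2kr}$.

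I do not expect any serious obstacle here, since all the analytic content is already packaged in Theorems~\ref{thm:main1} and~\ref{thm:main2}; the only point requiring minor care is the identification of $W(C_r)$ and the order $\abs{W}=2^r r!$ together with the verification that the sign flips in $(\mathbb{Z}/2\mathbb{Z})^r$ act trivially on the long-root variables $\mathbf{s}_l$ (so that the $2^r$ factor genuinely drops out as $r!$ identical summands in the left-hand side of \eqref{main2}, rather than contributing nontrivially) — but this is exactly the observation made in the paragraph preceding Corollary~\ref{Cor-Cr-Sr} and carries over verbatim. Thus the ``hard part,'' such as it is, amounts to correctly tracking the exponents $r^2$ and $r$ and confirming the rationality of $\mathcal{P}_{\Delta_{l+}}$; both are routine given the machinery already established.
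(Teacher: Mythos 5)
Your proposal is correct and follows the paper's own route: the paper obtains Corollary \ref{Cor-Z} precisely as the immediate specialization of Theorem \ref{thm:main2} to $\Delta=\Delta(C_r)$, $\Delta_1=\Delta_l$, $\nu=\mathbf{0}$, using $\abs{\Delta_+}=r^2$, $\abs{\Delta_{l+}}=r$, $\abs{W(C_r)}=2^r r!$, and the identification of $\zeta_r(\mathbf{s}_l,\mathbf{0};\Delta_{l+}(C_r))$ with the Euler--Zagier sum. Your added justification of the rationality of $\mathcal{P}_{\Delta_{l+}}((\mathbf{2k})_l,\mathbf{0};\Delta)$ (rational fractional parts in the generating function $F_{\Delta^*}$ at $\mathbf{y}=\mathbf{0}$) is a harmless supplement to what the paper leaves implicit.
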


\begin{remark}\label{Rem-Hof}
The fact 
$$\sum_{\sigma\in\mathfrak{S}_r} \zeta_r(2k_{\sigma^{-1}(1)},\ldots,2k_{\sigma^{-1}(r)})\in\mathbb{Q}\cdot \pi^{2\sum_{j=1}^{r}k_j}$$
was first proved by Hoffman \cite[Theorem 2.2]{Hoff}. This gives the well-known result 
$$\zeta_{r}(2k,\ldots,2k)\in\mathbb{Q}\cdot \pi^{2kr},$$
which was also given by Zagier \cite[Section 9]{Za} independently. 
Broadhurst, Borwein and Bradley gave explicit formulas for these values in \cite[Section 2]{BBB}. 
Also it is known that 
\begin{equation}
\zeta_{r}(2k,\ldots,2k)=\cc_r^{(k)}\frac{(2\pi i)^{2kr}}{(2kr)!},\label{Zagier-F}
\end{equation}
where
$$\cc_0^{(k)}=1,\ \ \cc_{n}^{(k)}=\frac{1}{2n}\sum_{j=1}^{n}(-1)^j \binom{2nk}{2jk}B_{2jk}\cc_{n-j}^{(k)}\ \ (n \geq 1).$$
Formula \eqref{Zagier-F} was first published in the lecture notes \cite{AK1}, \cite{AK2} 
written in Japanese (Exercise 5, Section 1.1 of those lecture notes). 
See also Muneta \cite{Mun}.  

We emphasize that \eqref{Zagier-F} can be regarded as a kind of Witten's volume formula \eqref{Zagier-F2}. 
Because \eqref{Zagier-F2} and \eqref{Zagier-F} in the case $r=1$ are both Euler's
well-known formula
\begin{equation}
\zeta(2k)=-B_{2k}\frac{(2\pi i)^{2k}}{2(2k)!}\qquad (k\in \mathbb{N}), \label{Euler-F}
\end{equation}
we can see that 
$\mathcal{P}_{\Delta_{l+}}((\mathbf{2k})_l,{\bf 0};\Delta)$ and $\cc_r^{(k)}$ are different types of generalizations of the ordinary Bernoulli number $B_{2k}$.
\end{remark}

\begin{example} \label{Exam-C2}
Let $\Delta=\Delta(C_2)$ be the root system of type $C_2$. 
By Theorem \ref{thm:main0}, we have
  \begin{align*}
      \ (\mathfrak{D}_{\Delta_{s+}}F)(t_1,t_2,y_1,y_2;\Delta)
      & =1+\frac{t_1 t_2 e^{\{y_2\}t_1}}{(e^{t_1}-1)(t_1-t_2)}\\
      & \quad +\frac{t_1 t_2 e^{\{y_2\} t_2}}{(e^{t_2}-1) (-t_1+t_2)}
      +\frac{t_1 t_2 e^{(1-\{y_1-y_2\}) t_1+\{y_1\} t_2}}{(e^{t_1}-1) (e^{t_2}-1)}
      \\
      &\quad
      -\frac{t_1 t_2 e^{(1-\{2 y_1-y_2\}) t_1}}{(e^{t_1}-1) (t_1+t_2)}
      -\frac{t_1 t_2 e^{\{2 y_1-y_2\} t_2}}{(e^{t_2}-1) (t_1+t_2)}
      \\
      & =\sum_{k_1,k_2=1}^\infty \mathcal{P}_{\Delta_{l+}}(k_1,k_2,y_1,y_2;\Delta)\frac{t_1^{k_1}t_2^{k_2}}{k_1!k_2!}.
  \end{align*}
Set $(y_1,y_2)=(0,0)$ and $\mathbf{k}=(0,k_1,k_2,0)$. Then $\zeta_2(0,k_1,k_2,0;y_1,y_2;\Delta)=\zeta_2(k_1,k_2)$ for $\Delta=\Delta(C_2)$. Hence it follows from \eqref{Th-main} that
  \begin{align}
& (1+(-1)^{k_1})(1+(-1)^{k_2})    \zeta_2(k_1,k_2)+(1+(-1)^{k_2})(1+(-1)^{k_1})    \zeta_2(k_2,k_1) \label{exam-C2} \\
& =     (-1)^4\mathcal{P}_{\Delta_{l+}}(k_1,k_2,0,0;\Delta)\frac{(2\pi i)^{k_1+k_2}}{k_1!k_2!}  \notag   
  \end{align}
for $k_1,k_2\geq 2$. 

For example, we can compute 
$$\mathcal{P}_{\Delta_{l+}}(4,4,0,0;\Delta)=\frac{1}{6300}$$ 
from the above expansion. Hence we obtain
$$\zeta_2(4,4)=\frac{(-1)^4}{8}\frac{1}{6300} \frac{(2\pi i)^8}{(4!)^2}=\frac{\pi^8}{113400}.$$
Similarly we can compute $\zeta_{2}(2k,2k)$ for 
$k\in \mathbb{N}$, though in this case we can also compute $\zeta_{2}(2k,2k)$ 
by using the well-known harmonic product formula for double zeta values 
\begin{equation}
\zeta(s)\zeta(t)=\zeta_{2}(s,t)+\zeta_{2}(t,s) +\zeta(s+t). \label{harm}
\end{equation}
In the next section, we introduce a slight generalization of Corollary \ref{Cor-Z} which gives evaluation formulas of $\zeta_{2}(k,l)$ for odd $k+l$ in terms of $\zeta(s)$ (see Proposition \ref{Pr-1}). 
\end{example}

\begin{remark}
In the general $C_r$ case, considering the expansion of 
$$(\mathfrak{D}_{\Delta_{s+}}F)({\bf t}_{\Delta_{l+}},{\bf 0};\Delta(C_r))$$
similarly, we can systematically compute $\zeta_{r}(2k,\ldots,2k)$. Moreover, 
considering the case $\nu\not={\bf 0}$ for $\zeta_r(\mathbf{s},\nu;\Delta(C_r))$, 
we can give character analogues of Corollary \ref{Cor-Z} for multiple $L$-values, 
which were first proved by Yamasaki \cite{Ya}. 
\end{remark}

\section{Some relations and parity results for double and triple zeta values}\label{sec-5}

In Theorem \ref{thm:main1}, we considered the sum over $W$ on the left-hand side of \eqref{Th-main}. 
Here, more generally, we consider the sum over a certain set of minimal coset
representatives on the left-hand side of \eqref{Th-main}. 
In this case, it is not easy to execute its computation directly. 
Hence we use a more technical method which was already introduced in \cite{KMT-CJ}. First we show the following result for double zeta values corresponding to a sub-root system of type $C_2$, where the number of the terms on the left-hand side is just the half of that on the left-hand side of \eqref{exam-C2}.

\begin{prop}\label{Pr-1}
For $p,q \in \mathbb{N}_{\geq 2}$, 
\begin{align*}
& \left( 1+(-1)^p\right)\zeta_{2}(p,q)+\left( 1+(-1)^q\right) \zeta_{2}(q,p) \\
& \ =2\sum_{j=0}^{[p/2]}\binom{p+q-2j-1}{q-1}\zeta(2j)\zeta(p+q-2j) \\
& \quad +2\sum_{j=0}^{[q/2]}\binom{p+q-2j-1}{p-1}\zeta(2j)\zeta(p+q-2j) -\zeta(p+q). 
\end{align*}
\end{prop}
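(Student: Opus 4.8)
The plan is to derive Proposition \ref{Pr-1} by applying the fundamental formula \eqref{Th-main} of Theorem \ref{thm:main1} to the root system $\Delta=\Delta(C_2)$ with the root subset $\Delta^*=\Delta_{l+}(C_2)$ (so that $A=\Delta_+\setminus\Delta^*=\Delta_{s+}$), but \emph{summing only over a set of minimal coset representatives} rather than over the whole Weyl group $W(C_2)=(\mathbb{Z}/2\mathbb{Z})^2\rtimes\mathfrak{S}_2$. Since the sign-flips in $W(C_2)$ act trivially on the variables indexed by $\Delta_{l+}$, the left-hand side of \eqref{Th-main} in Example \ref{Exam-C2} collapses to $4(\zeta_2(k_1,k_2)+\zeta_2(k_2,k_1))$ up to parity factors; taking instead the quotient by the subgroup generated by the long-root reflection (equivalently, working with $W(A_1\times A_1)$ or $W(A_2)\subset W(C_2)$ acting on the relevant coordinates) should produce exactly half the terms, namely $(1+(-1)^p)\zeta_2(p,q)+(1+(-1)^q)\zeta_2(q,p)$, as the proposition's left-hand side indicates.

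First I would set up the coset decomposition precisely: identify the subgroup $H\le W(C_2)$ whose quotient yields the minimal representatives in question, and rewrite the partial sum $\sum_{w\in W/H}(\prod_{\alpha}(-1)^{k_\alpha})\zeta_2(w^{-1}\mathbf{k},w^{-1}\mathbf{y};\Delta)$ as a combination of $S$-type sums (cf. \eqref{eq:def_S} and Proposition \ref{prop:ZP}) restricted to a union of Weyl chambers. This is where the technique of \cite{KMT-CJ} alluded to in the text enters: rather than computing $\mathcal{P}_{\Delta_{l+}}$ directly from the generating function $F_{\Delta_{l+}}$ of Example \ref{Exam-C2}, one re-expresses the partial Weyl sum through a contour-integral / partial-fraction argument that converts the product $1/(n_1^{p} n_2^{q})$ over a sublattice into an iterated sum, ultimately in terms of Riemann zeta values $\zeta(2j)$ and binomial coefficients. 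Concretely I expect to use the expansion of the relevant piece of $F$ as in Example \ref{Exam-C2}, extract the coefficient $\mathcal{P}_{\Delta_{l+}}(p,q,0,0;\Delta)$, and recognize the resulting finite sum as $\sum_{j}\binom{p+q-2j-1}{q-1}\zeta(2j)\zeta(p+q-2j)$ via Euler's formula \eqref{Euler-F} relating $\zeta(2j)$ to Bernoulli numbers, together with the binomial identity coming from the partial-fraction decomposition $\frac{1}{t_1-t_2}$ and $\frac{1}{t_1+t_2}$ factors appearing in the $F_{\Delta_{l+}}$ expansion.

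The key steps in order: (i) fix the explicit orthonormal realization $\alpha_1^\vee=e_1-e_2$, $\alpha_2^\vee=e_2$, so $\Delta_{l+}^\vee=\{e_1,e_2\}$ and $\Delta_{s+}^\vee=\{e_1-e_2,\,e_1+e_2\}$, and note $\zeta_2(\text{long part})=\zeta_2(p,q)$; (ii) take the partial sum over coset representatives of $W(C_2)$ modulo the appropriate order-two subgroup, producing the left-hand side with parity factors $(1+(-1)^p)$ and $(1+(-1)^q)$; (iii) evaluate this partial sum by the partial-fraction / generating-function method, splitting the $\frac{t_1t_2}{(e^{t_1}-1)(t_1\pm t_2)}$-type contributions; (iv) expand each denominator $(t_1\pm t_2)^{-1}$ geometrically, collect coefficients of $t_1^p t_2^q/(p!\,q!)$, and translate Bernoulli-number combinations into $\zeta(2j)$ via \eqref{Euler-F}; (v) assemble the two symmetric binomial sums plus the leftover $-\zeta(p+q)$ coming from the constant/diagonal term. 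The main obstacle will be step (iii)–(iv): carefully handling the non-simply-laced geometry so that the contributions of the two short roots $e_1-e_2$ and $e_1+e_2$ combine correctly — the $t_1+t_2$ denominator is responsible for the ``mixing'' that yields \emph{both} binomial sums $\binom{p+q-2j-1}{q-1}$ and $\binom{p+q-2j-1}{p-1}$ — and in tracking the boundary/convergence issues (the $k_\alpha=0$ case of Proposition \ref{prop:ZP}, flagged in its Remark) that govern the isolated $-\zeta(p+q)$ term, ensuring no spurious constant is introduced when passing from $S$-type sums back to the Euler--Zagier $\zeta_2$.
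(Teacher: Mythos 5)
Your outline breaks down at steps (ii)--(iv). The identity \eqref{Th-main} of Theorem \ref{thm:main1} rests on Proposition \ref{prop:ZP}, where the sum over the \emph{entire} Weyl group $W(C_2)$ is what reassembles the lattice sum $S(\mathbf{k},\mathbf{y};\Delta)$ and thereby produces the right-hand side $(-1)^{|\Delta_+|}\mathcal{P}_{\Delta_{l+}}(\mathbf{k},\mathbf{y};\Delta)\prod(2\pi i)^{k_\alpha}/k_\alpha!$. There is no analogous closed formula for the partial sum over a set of minimal coset representatives: restricting to $W/H$ gives a sum over a union of Weyl chambers that is \emph{not} computed by the coefficient $\mathcal{P}_{\Delta_{l+}}(p,q,0,0;\Delta)$ of $F_{\Delta_{l+}}$. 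Indeed such an equation cannot hold in general: the coefficient-extraction route forces the answer into $\mathbb{Q}\cdot(2\pi i)^{p+q}$ (and for $p+q$ odd the full-Weyl-group identity \eqref{exam-C2} degenerates to $0=0$), whereas the proposition is nontrivial precisely in the mixed-parity case and its right-hand side then contains odd zeta values $\zeta(p+q-2j)$. This is exactly the obstruction the paper flags at the start of Section 5 (``it is not easy to execute its computation directly''), and it is why a different, analytic method is needed rather than expanding $F_{\Delta_{l+}}$ and matching coefficients.

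What the paper actually does is much shorter and bypasses the generating function entirely: it quotes the functional relation \cite[Theorem 3.1]{KMT-CJ} (a $C_2$-type relation proved by the Fourier-series/contour technique that is spelled out in Section \ref{sec-proof2} for the $C_3$ and $B_2$ cases), specialized at $(k,l,s)=(p,q,0)$, which gives
\begin{align*}
\zeta(p)\zeta(q)+(-1)^p\zeta_{2}(p,q)+(-1)^q\zeta_{2}(q,p)
&=2\sum_{j=0}^{[p/2]}\binom{p+q-2j-1}{q-1}\zeta(2j)\zeta(p+q-2j)\\
&\quad+2\sum_{j=0}^{[q/2]}\binom{p+q-2j-1}{p-1}\zeta(2j)\zeta(p+q-2j),
\end{align*}
and then eliminates $\zeta(p)\zeta(q)$ by the harmonic product formula \eqref{harm}; the term $-\zeta(p+q)$ on the right-hand side of the proposition arises from this substitution, not from any ``constant/diagonal term'' of a generating-function expansion as you conjectured. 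If you want a self-contained argument instead of citing \cite{KMT-CJ}, you would have to redo that Fourier-analytic derivation (multiply the Lipschitz-type expansion \eqref{e-5-1} by a suitable power series, integrate/apply the lemma of \cite{KMT-CJ}, set $\theta=\pi$, and use Lemma \ref{L-5-2}), which is a genuinely different mechanism from Theorem \ref{thm:main1}.
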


\begin{proof}
The proof was essentially stated in \cite[Theorem 3.1]{KMT-CJ} which is a simpler form of a previous result for zeta-functions of type $A_2$ given by the third-named author \cite[Theorem 4.5]{Ts-Cam}. In fact, setting $(k,l,s)=(p,q,0)$ in \cite[Theorem 3.1]{KMT-CJ}, we have
\begin{align*}
& \zeta(p)\zeta(q)+(-1)^p\zeta_{2}(p,q)+(-1)^q \zeta_{2}(q,p) \\
& \ =2\sum_{j=0}^{[p/2]}\binom{p+q-2j-1}{q-1}\zeta(2j)\zeta(p+q-2j) \\
& \quad +2\sum_{j=0}^{[q/2]}\binom{p+q-2j-1}{p-1}\zeta(2j)\zeta(p+q-2j). 
\end{align*}
Combining this and \eqref{harm}, 
we have the assertion. 
\end{proof}

In particular when $p$ and $q$ are of different parity, we see that $\zeta_{2}(p,q)\in \mathbb{Q}[\{\zeta(j+1)\,|\,j\in \mathbb{N}\}]$ which was first proved by Euler. For example, we have
$$\zeta_2(2,3)=3\zeta(2)\zeta(3)-\frac{11}{2}\zeta(5).$$

Next we consider triple zeta values. From the viewpoint of the root system of $C_3$ type, we have the following theorem. Note that, unlike the case of double zeta values, this result seems not to be led from the result on the case of type $A_3$ (cf. \cite[Theorems 5.9 and 5.10]{MT1}).

\begin{thrm} \label{T-5-1}\ For $a,b,c\in \mathbb{N}_{\geq 2}$,
\begin{align*}
  &(1+(-1)^a)\zeta_3(a,b,c)+(1+(-1)^b)\{ \zeta_3(b,a,c)+\zeta_3(b,c,a)\}+(-1)^b(1+(-1)^c)\zeta_3(c,b,a)\\
& =2\bigg\{ \sum_{\xi=0}^{[a/2]}\zeta(2\xi)\sum_{\w=0}^{a-2\xi}\binom{\w+b-1}{\w}\binom{a+c-2\xi-\w-1}{c-1}\zeta_2(b+\w,a+c-2\xi-\w)\\
&  +\sum_{\xi=0}^{[b/2]}\zeta(2\xi)\sum_{\w=0}^{a-1}\binom{\w+b-2\xi}{\w}\binom{a+c-\w-2}{c-1}\zeta_2(b-2\xi+\w+1,a+c-1-\w)\\
&  +(-1)^b\sum_{\xi=0}^{[c/2]}\zeta(2\xi)\sum_{\w=0}^{c-2\xi}\binom{\w+b-1}{\w}\binom{a+c-2\xi-\w-1}{a-1}\zeta_2(b+\w,a+c-2\xi-\w)\\
&  +(-1)^b\sum_{\xi=0}^{[b/2]}\zeta(2\xi)\sum_{\w=0}^{c-1}\binom{\w+b-2\xi}{\w}\binom{a+c-\w-2}{a-1}\zeta_2(b-2\xi+\w+1,a+c-1-\w)\bigg\}\\
&  -\zeta_2(a+b,c)-(1+(-1)^b)\zeta_2(b,a+c)-(-1)^b\zeta_2(b+c,a).
\end{align*}
\end{thrm}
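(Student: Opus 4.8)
The plan is to imitate the method already used in the proof of Proposition \ref{Pr-1}, but one dimension higher: instead of summing \eqref{Th-main} over the full Weyl group $W(C_3)$ (which would reproduce the symmetrized identity \eqref{exam-C2} in depth $3$), I sum it over a suitable set of minimal coset representatives for $W(A_2)\backslash W(C_3)$ — or, more precisely, over the set of representatives that was isolated in \cite[Theorem 3.1]{KMT-CJ} for the $C_2$ case, now adapted to the sub-root system $\Delta_{l+}(C_3)\cong A_2$-type configuration sitting inside $\Delta_+(C_3)$. The point of taking this proper subset is exactly the phenomenon highlighted before Proposition \ref{Pr-1}: the number of terms on the left-hand side collapses to ``half'' of the fully symmetrized version, and the sign-flip generators of $(\mathbb{Z}/2\mathbb{Z})^3$ act trivially on $\mathbf{s}_l$, so the surviving combinatorics is governed purely by the $\mathfrak{S}_3$-part together with the short-root reflections that genuinely move the long-root coordinates.

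First I would set up the data: take $\Delta=\Delta(C_3)$, $\Delta^*=\Delta_{l+}$, $A=\Delta_{s+}$, and realize everything in the orthonormal basis $\{e_1,e_2,e_3\}$ with $(\Delta_{l+})^\vee=\{e_3,\,e_2,\,e_1\}$ corresponding to the MZV variables $(a,b,c)$ via $\langle e_i,\lambda\rangle$. Then, rather than invoking Theorem \ref{thm:main1} directly with the whole $W$, I would run the ``technical method'' of \cite{KMT-CJ}: apply the $C_2$-type parity relation \cite[Theorem 3.1]{KMT-CJ} not to Riemann zeta factors but to the inner double sum obtained after peeling off one summation variable from the triple Euler–Zagier sum. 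Concretely, write $\zeta_3(a,b,c)$ (and its $\mathfrak{S}_3$-images) by first summing over the innermost index; the leftover is a Dirichlet-series pairing of a single zeta against a double zeta, and the binomial expansion $(m+n)^{-s}$-type partial-fraction manipulations — the same ones used to pass from type $A_2$ to the explicit formula in \cite{Ts-Cam} and \cite{KMT-CJ} — convert each pairing into the $\sum_\xi \zeta(2\xi)\sum_\omega \binom{\cdot}{\omega}\binom{\cdot}{\cdot}\zeta_2(\cdot,\cdot)$ blocks that appear on the right-hand side. The four big bracketed sums correspond to the four ``corner'' terms produced by applying the $C_2$-reflection identity to the two halves ($a$-side and $c$-side) and then accounting for the $(-1)^b$ twist coming from the middle variable; the three trailing $\zeta_2$ terms are the ``diagonal'' corrections, the depth-$3$ analogue of the lone $-\zeta(p+q)$ in Proposition \ref{Pr-1}, arising from the harmonic-type product relations (the $C_3$ analogue of \eqref{harm}) that one must add back, exactly as \eqref{harm} was combined in at the end of that proof.

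The main obstacle I expect is bookkeeping of the two nested combinatorial sums and their binomial coefficients: one has to track, for each coset representative, precisely which of the three MZV slots gets shifted by a summation parameter $\omega$, which slot absorbs the even-index $2\xi$ from the $\zeta(2\xi)$ factor, and with what binomial weight — and to verify that the four contributions assemble into the stated symmetric-looking right-hand side with the correct $(-1)^b$ signs and with the index ranges $0\le\xi\le[a/2]$ versus $0\le\omega\le a-1$ (note the off-by-one discrepancy between the first/third sums and the second/fourth, which reflects whether the shifted variable sits on the ``same side'' as the reflected pair). A secondary technical point is convergence: Theorem \ref{thm:main1} requires all series on the left of \eqref{Th-main} to converge absolutely, so I would first prove the identity for $a,c$ large (and $b\ge 2$) where everything is dominated, and then extend to all $a,b,c\in\mathbb{N}_{\geq 2}$ by the standard analytic-continuation / meromorphic-continuation argument for Euler–Zagier sums, checking that no spurious poles appear at the integer points in question. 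Once the formula is established on the overlap, the identity theorem finishes it.
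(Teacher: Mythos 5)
You correctly locate the paper's route as the ``technical method'' of \cite{KMT-CJ} rather than a direct computation over coset representatives, but the mechanism you describe would not go through. The inner double sum obtained by ``peeling off'' one summation variable from $\zeta_3(a,b,c)$ is not a complete double zeta value: its range is truncated (or shifted) by the outer index, so the $C_2$-parity relation of \cite[Theorem 3.1]{KMT-CJ} (equivalently Proposition \ref{Pr-1}) cannot be applied termwise to it, and there is no genuine ``Dirichlet-series pairing of a single zeta against a double zeta'' to which your binomial/partial-fraction manipulations could be applied. What actually makes the argument work is to carry free parameters through the whole computation: one starts from the single-sum Fourier identity \eqref{e-5-1} (i.e.\ (4.31)--(4.32) of \cite{KMT-CJ}), multiplies it by the generating series \eqref{5-1-0} in $x,y$ with the factor $e^{i(l+m)\theta}$, and applies \cite[Lemma 6.2]{KMT-CJ} twice (first to introduce the exponent $c$, then, after replacing $x$ by $-xe^{-i\theta}$ and separating the $m+n=0$ terms, to introduce the exponent $b$); the coupling between inner and outer variables is encoded in $\theta$, and only at the end does one set $\theta=\pi$ and let $x,y\to1$. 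Without this parametrized set-up, the central step of your sketch has no precise meaning, so this is a genuine gap rather than a mere difference of presentation.

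Two further corrections. The trailing terms $-\zeta_2(a+b,c)-(1+(-1)^b)\zeta_2(b,a+c)-(-1)^b\zeta_2(b+c,a)$ do not arise from adding back harmonic-product relations as \eqref{harm} was used at the end of the proof of Proposition \ref{Pr-1}; in the actual proof they are the diagonal contributions of the terms with $m+2n=0$, $l+m+2n=0$ and $l+2m+2n=0$ after the specialization $\theta=\pi$ (the harmonic product \eqref{harmonic} only enters later, in deducing Corollary \ref{C-5-3}). Moreover, the analytic-continuation step you propose is unnecessary and would in fact be harder than the problem it is meant to solve: for $a,b,c\in\mathbb{N}_{\geq 2}$ all series occurring are absolutely convergent, so the limit $x,y\to1$ is justified directly, whereas continuing your (undefined) truncated inner-sum identities in several variables is not addressed by your sketch.
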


The proof of this theorem will be given in Section \ref{sec-proof2}.

This theorem especially implies the following result which was proved by Borwein and Girgensohn (see \cite{BG}). 

\begin{cor} \label{C-5-3} \ Let 
$$\mathfrak{X}=\mathbb{Q}\left[ \left\{ \zeta(j+1),\zeta_2(k,l+1)\right\}_{j,k,l\in \mathbb{N}}\right],$$
namely the $\mathbb{Q}$-algebra generated by Riemann zeta values and double zeta values at positive integers except singularities. 
Suppose $a,b,c\in \mathbb{N}_{\geq 2}$ satisfy that $a+b+c$ is even. Then $\zeta_3(a,b,c)\in \mathfrak{X}$.
\end{cor}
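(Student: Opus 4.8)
\textbf{Proof proposal for Corollary \ref{C-5-3}.}
The plan is to read off the claim directly from Theorem \ref{T-5-1} by a parity analysis of the left-hand side, combined with induction on $a+b+c$. First I would observe that, since $a+b+c$ is even, the three integers $a,b,c$ are either all even or exactly one is even. In the all-even case the coefficients $1+(-1)^a$, $1+(-1)^b$, $(-1)^b(1+(-1)^c)$ are all equal to $2$, so every one of the four triple-zeta terms on the left of Theorem \ref{T-5-1} appears with a nonzero (rational) coefficient; in particular $\zeta_3(a,b,c)$ itself occurs with coefficient $2$. When exactly one of $a,b,c$ is even, at least the term attached to that even argument survives, and a short case check (even $=a$, even $=b$, even $=c$) shows that in each subcase $\zeta_3(a,b,c)$ or a permutation of it occurs with a nonzero coefficient while any triple term that occurs with an odd-argument coefficient drops out.

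Next I would examine the right-hand side of Theorem \ref{T-5-1}. Every summand there is a rational multiple of a product $\zeta(2\xi)\,\zeta_2(\ast,\ast)$ (with the convention $\zeta(0)=-\tfrac12$, so $\zeta(2\xi)\in\mathfrak{X}$ for all $\xi\ge 0$), or a single double zeta value $\zeta_2(\ast,\ast)$; in every case the arguments are positive integers with the second argument of each $\zeta_2$ strictly larger than $1$ whenever $a,b,c\ge 2$, so each such term lies in $\mathfrak{X}$. Hence the entire right-hand side lies in $\mathfrak{X}$. Therefore the left-hand side, a specific $\mathbb{Q}$-linear combination of $\zeta_3(a,b,c)$, $\zeta_3(b,a,c)$, $\zeta_3(b,c,a)$ and $\zeta_3(c,b,a)$, lies in $\mathfrak{X}$.

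To conclude I would induct on $a+b+c$ (the base case $a=b=c=2$ being immediate, as all four terms then appear with equal coefficients and the identity expresses their sum in terms of elements of $\mathfrak{X}$). In the inductive step: in the all-even case, apply Theorem \ref{T-5-1} with the roles of $(a,b,c)$ permuted so as to isolate, one at a time, each of $\zeta_3(b,a,c)$ and $\zeta_3(b,c,a)$ and $\zeta_3(c,b,a)$; since $a+b+c$ is fixed, these permuted instances involve triple zeta values of the same weight, and solving the resulting small linear system expresses $\zeta_3(a,b,c)$ in terms of $\mathfrak{X}$. In the one-even case, the single surviving coefficient already isolates a permutation of $\zeta_3(a,b,c)$ directly, and the remaining permutations are obtained by permuting $(a,b,c)$ in Theorem \ref{T-5-1} so that the even argument is moved into the distinguished positions, each such instance again being an identity modulo $\mathfrak{X}$. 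Finally, any permutation $\zeta_3(\sigma(a),\sigma(b),\sigma(c))$ is linked to $\zeta_3(a,b,c)$ through the harmonic (stuffle) product relations for triple and double zeta values together with the cases already handled, so membership in $\mathfrak{X}$ propagates to $\zeta_3(a,b,c)$ itself.

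The main obstacle is the bookkeeping in the parity/permutation analysis of the left-hand side: one must check that, for each admissible parity pattern of $(a,b,c)$ with $a+b+c$ even, the four-term combination in Theorem \ref{T-5-1} (and its relevant permutations) actually suffices to solve for $\zeta_3(a,b,c)$ rather than only for some linear combination of permutations — i.e.\ that the associated coefficient matrix is nonsingular. Verifying this nondegeneracy, and handling the boundary cases where some $\zeta_2$ argument could a priori equal $1$ (which the hypothesis $a,b,c\ge 2$ is designed to prevent), is the only genuinely delicate point; the rest is routine membership checking in $\mathfrak{X}$.
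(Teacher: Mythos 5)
Your toolkit is the right one (Theorem \ref{T-5-1}, a parity analysis of its left-hand side, the check that its right-hand side lies in $\mathfrak{X}$ — your verification that every $\zeta_2$ appearing there has second argument at least $\min(a,c)\geq 2$ and that $\zeta(0)\in\mathbb{Q}$ is correct — and the harmonic product), but the way you combine them breaks down precisely at the point you defer: the nondegeneracy of the linear system. In the all-even case, Theorem \ref{T-5-1} applied to a permutation $(x,y,z)$ of $(a,b,c)$ gives $\zeta_3(x,y,z)+\zeta_3(y,x,z)+\zeta_3(y,z,x)+\zeta_3(z,y,x)\in\mathfrak{X}$, and the six permutations produce only three distinct such combinations (the instance for $(z,y,x)$ coincides with the one for $(x,y,z)$). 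In each of these relations $\zeta_3(a,b,c)$ and $\zeta_3(b,a,c)$ occur with the same coefficient, so no linear combination of the permuted instances of Theorem \ref{T-5-1} isolates $\zeta_3(a,b,c)$; the most one can extract is pairwise sums such as $\zeta_3(a,b,c)+\zeta_3(b,a,c)\in\mathfrak{X}$. The coefficient matrix you hoped is nonsingular is in fact singular. The same issue appears when only $b$ is even: the surviving left-hand side is $2\{\zeta_3(b,a,c)+\zeta_3(b,c,a)\}$, again only a sum of two permutations, not a single value.

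The missing ingredient is exactly what the paper makes central: the harmonic product formula \eqref{harmonic}, $\zeta_3(a,b,c)+\zeta_3(b,a,c)+\zeta_3(b,c,a)=\zeta(a)\zeta_2(b,c)-\zeta_2(b,c+a)-\zeta_2(a+b,c)$, used as an additional independent relation rather than as a final ``linking'' device. For $a,b,c$ all even, subtracting \eqref{harmonic} from the instance of Theorem \ref{T-5-1} for $(a,b,c)$ gives $\zeta_3(c,b,a)\in\mathfrak{X}$, and relabelling settles that case; when exactly one argument is even, the pattern $(\text{even},\text{odd},\text{odd})$ comes directly from the theorem, and \eqref{harmonic} then yields $(\text{odd},\text{even},\text{odd})$ and, using both previously settled patterns, $(\text{odd},\text{odd},\text{even})$. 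With that correction your argument becomes the paper's proof. Note also that the induction on $a+b+c$ is superfluous: nothing in the argument lowers the weight, and the right-hand side of Theorem \ref{T-5-1} lies in $\mathfrak{X}$ by definition, not by an inductive hypothesis.
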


\begin{proof}
We recall the harmonic product formula
\begin{align}
& \zeta_3(a,b,c)+\zeta_3(b,a,c)+\zeta_3(b,c,a)  =\zeta(a)\zeta_2(b,c)-\zeta_2(b,c+a)-\zeta_2(a+b,c) \label{harmonic}
\end{align}
for $a,b,c\in \mathbb{N}_{\geq 2}$ (see \cite{Ka}). 

Let $a,b,c \in \mathbb{N}_{\geq 2}$ satisfying that $a+b+c$ is even. First we assume that $a,b,c$ are all even. Then, combining Theorem \ref{T-5-1} and \eqref{harmonic}, we see that $\zeta_3(c,b,a)\in \mathfrak{X}$. 

Next we assume that $a$ is even and $b,c$ are odd. Then, by Theorem \ref{T-5-1}, we see that $\zeta_3(a,b,c)\in \mathfrak{X}$. 

As for other cases, we can similarly obtain the assertions by using Theorem \ref{T-5-1} and \eqref{harmonic}. Thus we complete the proof.
\end{proof}

\begin{remark}
The following property of the multiple zeta value is sometimes called the parity result:

\it The multiple zeta value $\zeta_r(k_1,k_2,\ldots,k_r)$ of depth $r$ can be expressed as a rational linear combination of products of MZVs of lower depth than $r$, when its depth $r$ and its weight $\sum_{j=1}^{n}k_j$ are of different parity. 

\rm
The fact in case of depth 2 was proved by Euler, and that of depth $3$ was proved by Borwein and Girgensohn (see \cite{BG}). Further they conjectured the above assertion in the case of an arbitrary depth.   This conjecture was proved by the third-named author \cite{TsActa04} and by Ihara, Kaneko and Zagier \cite{IKZ} independently.
It should be stressed that our Corollary \ref{C-5-3} gives an explicit expression of the parity result for the triple zeta value under the condition $a,b,c\in \mathbb{N}_{\geq 2}$. 

Therefore it seems important to generalize Theorem \ref{T-5-1} in order to give an explicit expression of the parity result of an arbitrary depth.
\end{remark}

\begin{example}
Putting $(a,b,c)=(2,2,4)$ in Theorem \ref{T-5-1}, we have
\begin{align*}
& 2\zeta_3(2,2,4)+2\{\zeta_3(2,2,4)+\zeta_3(2,4,2)\}+2\zeta_3(4,2,2)\\
& =2\zeta(4)\zeta_2(2,2)+\zeta(2)\{8\zeta_2(4,2)+ 12\zeta_2(3,3)+16\zeta_2(2,4)+16\zeta_2(1,5)\} \\
& \quad -16\zeta_2(6,2) - 20\zeta_2(5,3)-25\zeta_2(4,4)-24\zeta_2(3,5)-17\zeta_2(2,6).
\end{align*}
Therefore, using \eqref{harmonic}, we obtain
\begin{align*}
\zeta_3(4,2,2)& =\zeta(4)\zeta_2(2,2)+\zeta(2)\{4\zeta_2(4,2)+ 6\zeta_2(3,3)+7\zeta_2(2,4)+8\zeta_2(1,5)\}\\
& \quad -8\zeta_2(6,2) - 10\zeta_2(5,3)-\frac{23}{2}\zeta_2(4,4)-12\zeta_2(3,5)-\frac{15}{2}\zeta_2(2,6) \in \mathfrak{X}.
\end{align*}
Note that this formula can be proved by combining known results for MZVs given by the double shuffle relations and harmonic product formulas (see, for example, \cite[Section 5]{MP}).
\end{example}

\begin{remark} 
If we replace \eqref{5-1-0} (in Section \ref{sec-proof2}) by 
$$\sum_{l\in \mathbb{N}}\sum_{m\in \mathbb{Z}^*} (-1)^{l+m}x^l y^m e^{i(l+m)\theta}, $$
and argue along the same line as in the proof of Theorem \ref{T-5-1}, then we can obtain 
\begin{align*}
& \left( 1+(-1)^a\right)\left( 1+(-1)^c\right)\left\{ \zeta_3(a,b,c)+\zeta_3(a,c,b)+\zeta_3(c,a,b)\right\} \\
& \ +\left( 1+(-1)^b\right)\left( 1+(-1)^c\right)\left\{ \zeta_3(c,b,a)+\zeta_3(b,c,a)+\zeta_3(b,a,c)\right\}\\
& \qquad \in \mathbb{Q}[\{\zeta(j+1)\,|\,j\in \mathbb{N}\}]
\end{align*}
for $a,b,c\in \mathbb{N}_{\geq 2}$. 
In particular when $a,b,c$ are both even, we have \eqref{Zagier-F} for the triple zeta value which can be regarded as a kind of Witten's volume formula \eqref{Zagier-F2} (see Section \ref{sec-4}). Furthermore, when 
$a$ is odd and both $b$ and $c$ are even, then 
\begin{align*}
& {\zeta_3(c,b,a)+\zeta_3(b,c,a)+\zeta_3(b,a,c)}  \in \mathbb{Q}\left[ \left\{\zeta(j+1)\,|\,j\in \mathbb{N}\right\}\right]. 
\end{align*}
Note that this result can also be deduced by combining \eqref{harmonic} and Proposition \ref{Pr-1}. 
\end{remark}

\section{Multiple zeta values associated with the root system of type $B_r$}
\label{sec-6}

In this section we discuss the $B_r$-analogue of our theory developed in the preceding
two sections.

As for the root system of type $B_r$, namely for $\Delta=\Delta(B_r)$, 
we see that
\begin{align*}
  (\Delta_{s+})^\vee&
  =\{\alpha_r^\vee,\ 2\alpha_{r-1}^\vee+\alpha_r^\vee, 
    2\alpha_{r-2}^\vee+2\alpha_{r-1}^\vee+\alpha_r^\vee,
    \ldots, 2\alpha_1^\vee+\cdots+2\alpha_{r-1}^\vee+\alpha_r^\vee  \}.
\end{align*}
Therefore for $\mathbf{s}_s=(s_{\alpha})_{\alpha\in\Delta_{s+}}$ we have
\begin{align}
& \zeta_r({\bf s}_s,{\bf 0};\Delta_{s+}(B_r))=\sum_{m_1,\ldots,m_r=1}^\infty
\prod_{i=1}^{r}
\frac{1}{(2\sum_{j=r-i+1}^{r-1}m_j+m_r)^{{s_i}}}, \label{B2-zeta}
\end{align}
which is a partial sum of $\zeta_{r}({\bf s})$. For example, we have
\begin{align}
& \zeta_2({\bf s}_s,{\bf 0};\Delta_{s+}(B_2))=\sum_{l,m=1}^\infty \frac{1}{m^{s_1}(2l+m)^{s_2}},\label{6-1}\\
& \zeta_3({\bf s}_s,{\bf 0};\Delta_{s+}(B_3))=\sum_{l,m,n=1}^\infty \frac{1}{n^{s_1}(2m+n)^{s_2}(2l+2m+n)^{s_3}},\label{6-2}
\end{align}
where $s_j=s_{\alpha_j}$ corresponding to $\alpha_j\in \Delta_{s+}$. 

From the viewpoint of zeta-functions 
of root systems, values of \eqref{B2-zeta} at positive integers can be regarded as the 
objects dual to MZVs, in the sense that $B_r$ and $C_r$ are dual of each other. 
Hence we denote \eqref{B2-zeta} by $\zeta_r^\sharp(s_1,\ldots,s_r)$. 

Since $W(B_r)\simeq W(C_r)$, just like Corollary \ref{Cor-Cr-Sr}, 
from Theorem \ref{thm:main1} we can obtain 
the following result, which gives the ``refined restricted sum formulas'' for
$\zeta_r^{\sharp}(\mathbf{s})$.
\begin{cor}\label{Cor-Br-Sr}
Let $\Delta=\Delta(B_r)$. For $(2{\bf k})_s=(2k_{\alpha})_{\alpha\in\Delta_{s+}}=(2k_1,\ldots,2k_r)\in \left(2\mathbb{N}\right)^r$ and 
$\mathbf{y}=\nu\in P^\vee/Q^\vee$, 
\begin{equation}
  \sum_{\sigma\in\mathfrak{S}_r}
  \zeta_r (\sigma^{-1}(2\mathbf{k})_s,\nu;\Delta_{l+})
=\frac{(-1)^{r}}{2^r}\mathcal{P}_{\Delta_{s+}}
((2\mathbf{k})_s,\nu;\Delta)
  \prod_{j=1}^{r}\frac{(2\pi i)^{2k_j}}{(2k_j)!}  \in\mathbb{Q}\cdot \pi^{2\sum_{j=1}^{r}k_j}. 
\end{equation}
In particular when $\nu={\bf 0}$, 
\begin{equation} \label{EZ-Sr-1-2}
  \sum_{\sigma\in\mathfrak{S}_r}
  \zeta_r^\sharp(2k_{\sigma^{-1}(1)},\ldots,2k_{\sigma^{-1}(r)})
=\frac{(-1)^{r}}{2^r}\mathcal{P}_{\Delta_{s+}}
((2\mathbf{k})_s,{\bf 0};\Delta)
  \prod_{j=1}^{r}\frac{(2\pi i)^{2k_j}}{(2k_j)!}  \in\mathbb{Q}\cdot \pi^{2\sum_{j=1}^{r}k_j}. 
\end{equation}
\end{cor}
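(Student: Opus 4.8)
The plan is to derive Corollary \ref{Cor-Br-Sr} as a direct specialization of Theorem \ref{thm:main1}, in exact parallel with the derivation of Corollary \ref{Cor-Cr-Sr} in Section \ref{sec-4}. First I would fix $\Delta=\Delta(B_r)$ and take $\Delta^*=\Delta_{s+}$, the set of positive short coroots written out at the start of this section; then $A=\Delta_+\setminus\Delta^*=\Delta_{l+}$, so the differential operator $\mathfrak{D}_A$ in Theorem \ref{thm:main0} kills precisely the long-root variables and $F_{\Delta^*}=F_{\Delta_{s+}}$ is the relevant reduced generating function, with Taylor coefficients $\mathcal{P}_{\Delta_{s+}}(\mathbf{k}_{\Delta_{s+}},\mathbf{y};\Delta)$. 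Setting $\mathbf{s}=\mathbf{k}$ with $k_\alpha=2k_i$ (for the $i$-th short coroot) on $\Delta_{s+}$ and $k_\alpha=0$ on $\Delta_{l+}$, Theorem \ref{thm:main1} yields
\begin{equation*}
\sum_{w\in W}\Bigl(\prod_{\alpha\in\Delta_+\cap w\Delta_-}(-1)^{k_\alpha}\Bigr)\zeta_r(w^{-1}\mathbf{k},w^{-1}\mathbf{y};\Delta)=(-1)^{|\Delta_+|}\mathcal{P}_{\Delta_{s+}}((2\mathbf{k})_s,\nu;\Delta)\prod_{j=1}^{r}\frac{(2\pi i)^{2k_j}}{(2k_j)!},
\end{equation*}
once I check that the series on the left converge absolutely (which holds since every surviving $k_\alpha=2k_j\geq 2$).

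Next I would unwind the left-hand side using the explicit shape of $W(B_r)$. Since $B_r$ and $C_r$ are dual, $W(B_r)\simeq W(C_r)=(\mathbb{Z}/2\mathbb{Z})^r\rtimes\mathfrak{S}_r$ acting on the realization $\Delta_{s+}^\vee=\{e_r,e_{r-1},\dots,e_1\}$ in the orthonormal basis $\{e_1,\dots,e_r\}$ (this is the $B_r$-analogue of the $C_r$ identification $(\Delta_{l+})^\vee=\{e_r,\dots,e_1\}$ made in Section \ref{sec-4}); the sign flips fix each $e_j$ up to sign, hence act trivially on the short-root exponents $\mathbf{s}_s$, and all the exponents $2k_j$ are even so the sign prefactor $\prod(-1)^{k_\alpha}=1$ for every $w$. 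Thus the sum over $W$ collapses to $|(\mathbb{Z}/2\mathbb{Z})^r|=2^r$ copies of the sum over the symmetric group, giving
\begin{equation*}
\sum_{w\in W}(\cdots)\zeta_r(w^{-1}\mathbf{k},w^{-1}\mathbf{y};\Delta)=2^r\sum_{\sigma\in\mathfrak{S}_r}\zeta_r(\sigma^{-1}(2\mathbf{k})_s,\nu;\Delta_{l+}),
\end{equation*}
where the zeta-function of the root set $\Delta_{l+}$ appears because setting $k_\alpha=0$ on the long roots amounts to restricting the product in \eqref{def-zeta} to the short coroots, which by definition is $\zeta_r(\mathbf{s}_s,\nu;\Delta_{l+})$ — here I should be careful that the subscript in the statement reads $\Delta_{l+}$ precisely because the remaining index set after deleting $A=\Delta_{l+}$ is what labels the root set, matching the notation $\zeta_r(\mathbf{s},\nu;\Delta^*)$ of \eqref{def-zeta-root-set}. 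Dividing by $2^r$ and noting $(-1)^{|\Delta_+|}=(-1)^{r^2}=(-1)^r$ for $B_r$ gives the first displayed formula of the corollary; the rationality $\in\mathbb{Q}\cdot\pi^{2\sum k_j}$ follows since $\mathcal{P}_{\Delta_{s+}}$ is a rational number and $(2\pi i)^{2\sum k_j}\in\mathbb{Q}\cdot\pi^{2\sum k_j}$.

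Finally, the ``in particular'' case is obtained by taking $\nu=\mathbf{0}$ and invoking \eqref{B2-zeta}, which identifies $\zeta_r(\mathbf{s}_s,\mathbf{0};\Delta_{s+}(B_r))$ with $\zeta_r^\sharp(s_1,\dots,s_r)$; permuting the arguments by $\sigma$ then reindexes the $2k_j$ exactly as in \eqref{EZ-Sr-1-2}. I expect the only genuinely delicate point to be the bookkeeping of which set indexes the root-set zeta-function and keeping the $B_r$/$C_r$ duality straight — i.e. verifying that deleting the long roots of $B_r$ leaves the short-root partial sum \eqref{B2-zeta} rather than something else — together with confirming that the absolute-convergence hypothesis of Theorem \ref{thm:main1} is met for every Weyl-group translate (it is, because each nonzero exponent is $\geq 2$ and the sign flips do not change the magnitudes of the linear forms $\langle\alpha^\vee,\lambda\rangle$). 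Everything else is a transcription of the $C_r$ argument already carried out in Section \ref{sec-4}.
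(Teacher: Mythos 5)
Your derivation is the paper's own: the paper deduces this corollary from Theorem \ref{thm:main1} exactly as you do, noting $W(B_r)\simeq W(C_r)=(\mathbb{Z}/2\mathbb{Z})^r\rtimes\mathfrak{S}_r$, that the sign flips act trivially on the short-root variables, that the evenness of all exponents (together with the vanishing of the long-root exponents) makes every sign factor $\prod_{\alpha\in\Delta_+\cap w\Delta_-}(-1)^{k_\alpha}$ equal to $1$, so that the $W$-sum collapses to $2^r$ copies of the $\mathfrak{S}_r$-sum, and $(-1)^{\abs{\Delta_+}}=(-1)^{r^2}=(-1)^r$; the case $\nu=\mathbf{0}$ is then read off from \eqref{B2-zeta}, and absolute convergence holds as you say because every surviving exponent is at least $2$.

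One sentence should be corrected, although it does not affect the substance of the argument. You assert that restricting the product to the short roots gives ``by definition'' $\zeta_r(\mathbf{s}_s,\nu;\Delta_{l+})$, on the grounds that the deleted set $A=\Delta_{l+}$ is what labels the root-set zeta-function. This inverts the convention of \eqref{def-zeta-root-set}: there the label $\Delta^*$ is the set of roots over which the product is taken, i.e.\ the surviving roots, which in the present situation is $\Delta_{s+}$ (note also that an exponent vector with only $r$ components could not be indexed by the $r(r-1)$ long positive roots of $B_r$). What your collapse of the $W$-sum actually produces is $2^r\sum_{\sigma\in\mathfrak{S}_r}\zeta_r(\sigma^{-1}(2\mathbf{k})_s,\nu;\Delta_{s+})$; the subscript $\Delta_{l+}$ in the printed statement is simply a slip carried over from the $C_r$ case (Corollary \ref{Cor-Cr-Sr}), as the $\nu=\mathbf{0}$ specialization $\zeta_r^\sharp(\cdots)=\zeta_r(\cdots,\mathbf{0};\Delta_{s+}(B_r))$ confirms, so it should be read as $\Delta_{s+}$ rather than justified as written.
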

%
%
%
From Theorem \ref{thm:main2}, we obtain an analogue of Corollary \ref{Cor-Z}, which is a kind of Witten's volume formula and also a $B_r$-type analogue of \eqref{Zagier-F}. 

\begin{cor} \label{C-6-2}
Let $\Delta=\Delta(B_r)$. For $({\bf 2k})_s=(2k,\ldots,2k)$ with any $k\in \mathbb{N}$, 
\begin{align*}
& \zeta_r^\sharp(2k,\ldots,2k)=\frac{(-1)^{r}}{2^r r!}\mathcal{P}_{\Delta_{s+}}
((\mathbf{2k})_s,{\bf 0};\Delta)\prod_{j=1}^{r}\frac{(2\pi i)^{2k_j}}{(2k_j)!}
\in\mathbb{Q}\cdot \pi^{2kr}.
\end{align*}
\end{cor}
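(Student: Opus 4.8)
The plan is to obtain this corollary as a direct application of Theorem \ref{thm:main2}, in complete parallel with the derivation of Corollary \ref{Cor-Z} in the $C_r$ case; the only genuinely new ingredient is the identification of $\zeta_r^\sharp$ with the zeta-function attached to the short positive roots of $B_r$. First I would recall, as computed at the beginning of this section, that for $\Delta=\Delta(B_r)$ in the standard realization the short positive coroots are
\[
(\Delta_{s+})^\vee=\{\alpha_r^\vee,\ 2\alpha_{r-1}^\vee+\alpha_r^\vee,\ \ldots,\ 2\alpha_1^\vee+\cdots+2\alpha_{r-1}^\vee+\alpha_r^\vee\},
\]
so that $\Delta_{s+}$ has exactly $r$ elements, is a root set of $\Delta_+$ (as recorded in Theorem \ref{thm:main2}), and by \eqref{B2-zeta} satisfies $\zeta_r(\mathbf{s}_s,\mathbf{0};\Delta_{s+}(B_r))=\zeta_r^\sharp(s_1,\ldots,s_r)$ for $\mathbf{s}_s=(s_\alpha)_{\alpha\in\Delta_{s+}}$. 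Taking $s_i=2k$ for every $i$, the quantity $\zeta_r(\mathbf{k}_1,\mathbf{0};\Delta_{s+})$ appearing in Theorem \ref{thm:main2} is exactly $\zeta_r^\sharp(2k,\ldots,2k)$.

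Next I would invoke Theorem \ref{thm:main2} with $\Delta_1=\Delta_s$, $\Delta_2=\Delta_l$, $\mathbf{k}_1=(2k,\ldots,2k)\in(2\mathbb{N})^r$ and $\nu=\mathbf{0}$, obtaining
\[
\zeta_r^\sharp(2k,\ldots,2k)=\frac{(-1)^{\abs{\Delta_+}}}{\abs{W}}\,\mathcal{P}_{\Delta_{s+}}\bigl((\mathbf{2k})_s,\mathbf{0};\Delta\bigr)\prod_{j=1}^{r}\frac{(2\pi i)^{2k}}{(2k)!},
\]
after which it only remains to rewrite the constant and to verify the claimed membership. Since $\abs{\Delta_+(B_r)}=r^2$, $\abs{W(B_r)}=2^r r!$ and $r^2\equiv r\pmod 2$, we have $(-1)^{\abs{\Delta_+}}/\abs{W}=(-1)^r/(2^r r!)$, which is the constant in the statement. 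For the membership in $\mathbb{Q}\cdot\pi^{2kr}$, I would observe that $\mathcal{P}_{\Delta_{s+}}((\mathbf{2k})_s,\mathbf{0};\Delta)$ is a Taylor coefficient in $\mathbf{t}_{\Delta_{s+}}$ of $F_{\Delta_{s+}}(\mathbf{t}_{\Delta_{s+}},\mathbf{0};\Delta)$, which is holomorphic around $\mathbf{t}_{\Delta_{s+}}=\mathbf{0}$ by Theorem \ref{thm:main0}; in its defining expression every pairing $\{\mathbf{0}+q\}_{\mathbf{V},\beta}$ and every structure constant $\langle\gamma^\vee,\mu^{\mathbf{V}}_\beta\rangle$ is rational, so each summand is a product of a rational function of the $t_\alpha$ with rational coefficients and of generating series of Bernoulli polynomials at rational arguments, whence $\mathcal{P}_{\Delta_{s+}}((\mathbf{2k})_s,\mathbf{0};\Delta)\in\mathbb{Q}$. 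Finally $(2\pi i)^{2kr}/((2k)!)^r=(-1)^{kr}2^{2kr}\pi^{2kr}/((2k)!)^r$, so the right-hand side lies in $\mathbb{Q}\cdot\pi^{2kr}$.

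I expect no serious obstacle: all of the analytic content is already carried by Theorems \ref{thm:main0}, \ref{thm:main1} and \ref{thm:main2}, and what remains is bookkeeping together with the combinatorics $\abs{\Delta_+(B_r)}=r^2$, $\abs{W(B_r)}=2^r r!$. The single step needing a sentence of care is checking that, under $\lambda=\sum_k m_k\lambda_k\in P_{++}$, the values $\langle(\Delta_{s+})^\vee,\lambda\rangle$ reproduce exactly the linear forms $2\sum_{j=r-i+1}^{r-1}m_j+m_r$ occurring in the definition \eqref{def-Br-Zeta} of $\zeta_r^\sharp$ --- the same computation that yields the displayed description of $(\Delta_{s+})^\vee$ above, and which validates \eqref{B2-zeta}; once that is granted the corollary is immediate. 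In parallel with Remark \ref{Rem-Hof}, one may add that for $r=1$ the identity degenerates to Euler's formula \eqref{Euler-F}, so that $\mathcal{P}_{\Delta_{s+}}((\mathbf{2k})_s,\mathbf{0};\Delta)$, like its $C_r$-counterpart, is another generalization of the Bernoulli number $B_{2k}$.
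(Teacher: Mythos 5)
Your proposal is correct and follows the paper's own route: the corollary is obtained by applying Theorem \ref{thm:main2} with $\Delta_1=\Delta_s$ and $\nu=\mathbf{0}$, identifying $\zeta_r(\mathbf{k}_1,\mathbf{0};\Delta_{s+}(B_r))$ with $\zeta_r^\sharp(2k,\ldots,2k)$ via \eqref{B2-zeta}, and evaluating the constant through $\abs{\Delta_+(B_r)}=r^2$, $\abs{W(B_r)}=2^r r!$ and $(-1)^{r^2}=(-1)^r$. Your added remarks on the rationality of $\mathcal{P}_{\Delta_{s+}}((\mathbf{2k})_s,\mathbf{0};\Delta)$ only make explicit what the paper leaves implicit, so there is nothing to correct.
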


\begin{example}\label{B-EZ-Exam}
\begin{align*}
\zeta_2^\sharp(2,2)&=\sum_{m,n=1}^\infty \frac{1}{n^{2} (2m+n)^{2}}=\frac{1}{320}\pi^4,\\
\zeta_2^\sharp(4,4)&=\sum_{m,n=1}^\infty \frac{1}{n^{4} (2m+n)^{4}}=\frac{23}{14515200}\pi^8,\\
\zeta_2^\sharp(6,6)&=\sum_{m,n=1}^\infty \frac{1}{n^{6} (2m+n)^{6}}=\frac{1369}{871782912000}\pi^{12}.
\end{align*}
These formulas can be obtained by calculating the generating function of type $B_2$ 
similarly to the case of type $C_2$ in Example \ref{Exam-C2} (see Section \ref{sec-4}). Also we can obtain these formulas by Theorem \ref{T-B2-EZ} in the case $(p,q)=(2k,2k)$ for $k\in \mathbb{N}$. 
However, unlike the ordinary 
double zeta value, 
these cannot be easily deduced from \eqref{harm}. 

Similarly, calculating the generating function of type $B_3$, we have explicit examples of Corollary \ref{C-6-2}:
\begin{align*}
\zeta_3^\sharp(2,2,2)&=\sum_{l,m,n=1}^\infty \frac{1}{n^{2} (2m+n)^{2}(2l+2m+n)^2}=\frac{1}{40320}\pi^{6},\\
\zeta_3^\sharp(4,4,4)&=\sum_{l,m,n=1}^\infty \frac{1}{n^{4} (2m+n)^{4}(2l+2m+n)^4}=\frac{23}{697426329600}\pi^{12},\\
\zeta_3^\sharp(6,6,6)&=\sum_{l,m,n=1}^\infty \frac{1}{n^{6} (2m+n)^{6}(2l+2m+n)^6}=\frac{1997}{17030314057236480000}\pi^{18}.
\end{align*}
\end{example}

Also, similarly to Proposition \ref{Pr-1}, we can obtain the following result whose proof will be given in Section \ref{sec-proof2}.

\begin{thrm}\label{T-B2-EZ}
For $p,q\in \mathbb{N}_{\geq 2}$,
\begin{align}
& \ (1+(-1)^p)\zeta_2^\sharp (p,q) +(1+(-1)^q)\zeta_2^\sharp(q,p)\label{Pr-2-1}\\
& = 2 \sum_{j=0}^{[p/2]} \frac{1}{2^{p+q-2j}}\binom{p+q-1-2j}{q-1}\zeta(2j)\zeta(p+q-2j)\notag\\
& + 2\sum_{j=0}^{[q/2]} \frac{1}{2^{p+q-2j}}\binom{p+q-1-2j}{p-1}\zeta(2j)\zeta(p+q-2j) -\zeta(p+q). \notag
\end{align}
\end{thrm}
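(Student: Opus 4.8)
The plan is to mimic exactly the strategy used for Proposition~\ref{Pr-1}, replacing the underlying type~$C_2$ (equivalently, type~$A_2$) computation by its type~$B_2$ counterpart. The key observation is that $\zeta_2^\sharp(p,q)$ is, up to the substitution $m\mapsto 2m$ in the innermost summation variable, the zeta-function attached to the short-root subset $(\Delta_{s+})^\vee(B_2)=\{\alpha_2^\vee,\ 2\alpha_1^\vee+\alpha_2^\vee\}$; concretely $\zeta_2^\sharp(p,q)=\sum_{l,m\geq1}m^{-p}(2l+m)^{-q}$ by \eqref{6-1}. Hence the same machinery that in \cite{KMT-CJ} produced the type-$C_2$ (or type-$A_2$) identity behind Proposition~\ref{Pr-1} applies here, the only change being that the two linear forms that appear are $m$ and $2l+m$ rather than $m$ and $l+m$, which is responsible for the powers of $2$ in the denominators on the right-hand side.

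First I would set up the generating-function identity of \cite[Theorem~3.1]{KMT-CJ}, or rather its $B_2$-analogue: one starts from the double series $\sum_{l\in\mathbb{N}}\sum_{m\in\mathbb{Z}^*}x^l y^m e^{i(l+m)\theta}$ appropriately modified so that the relevant lattice sum produces the forms $m$ and $2l+m$, expands by partial fractions into a sum over the bases of the relevant root subset, and reads off the Fourier coefficients. Equivalently, one may invoke the $B_2$ case of our Theorem~\ref{thm:main1} applied to $\Delta=\Delta(B_2)$ with $\Delta^*=\Delta_{s+}$, expanding $(\mathfrak{D}_{\Delta_{l+}}F)(t_1,t_2,0,0;\Delta(B_2))$ as in Example~\ref{Exam-C2}; the term $(2l+m)^{-q}$ forces a rescaling $t\mapsto t/2$ somewhere in the $B_2$ generating function, which inserts the factor $2^{-(p+q-2j)}$ after collecting the Bernoulli-number contributions $\zeta(2j)$. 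Either route yields the intermediate identity
\begin{align*}
& \zeta(p)\zeta(q)+(-1)^p\zeta_2^\sharp(p,q)+(-1)^q\zeta_2^\sharp(q,p)\\
& = 2\sum_{j=0}^{[p/2]}\frac{1}{2^{p+q-2j}}\binom{p+q-1-2j}{q-1}\zeta(2j)\zeta(p+q-2j)\\
& \quad +2\sum_{j=0}^{[q/2]}\frac{1}{2^{p+q-2j}}\binom{p+q-1-2j}{p-1}\zeta(2j)\zeta(p+q-2j),
\end{align*}
the analogue of the displayed formula in the proof of Proposition~\ref{Pr-1}.

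Finally I would eliminate the product $\zeta(p)\zeta(q)$ using a harmonic-type product formula for $\zeta_2^\sharp$. Unlike the ordinary double zeta value, $\zeta_2^\sharp$ does not satisfy \eqref{harm}; instead, since $\zeta_2^\sharp(p,q)=\sum_{l,m\geq1}m^{-p}(2l+m)^{-q}$ with the two running variables $m<2l+m$ differing by an even number, the correct relation is
\[
\zeta(p)\zeta(q)=\zeta_2^\sharp(p,q)+\zeta_2^\sharp(q,p)+\zeta(p+q),
\]
which follows by splitting $\sum_{a,b\geq1}a^{-p}b^{-q}$ according to $a<b$, $a>b$, $a=b$ and noting that in the first two cases $b-a$ (resp.\ $a-b$) ranges over the positive integers while the smaller index ranges freely, after the change of variables $b=2l+m$, $a=m$ (resp.\ symmetrically); the diagonal $a=b$ contributes $\zeta(p+q)$. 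Adding the intermediate identity to $(-1)^p$ times this product formula when $p$ is odd, and directly when $p$ is even — or, more uniformly, combining the two as in the proof of Proposition~\ref{Pr-1} so that the coefficient of $\zeta_2^\sharp(p,q)$ becomes $1+(-1)^p$ and that of $\zeta_2^\sharp(q,p)$ becomes $1+(-1)^q$ — yields \eqref{Pr-2-1}. I expect the main obstacle to be bookkeeping the powers of~$2$ correctly: one must track precisely where the rescaling $t\mapsto t/2$ (or equivalently the substitution $m\mapsto 2m$) enters the generating-function computation so that the exponent $p+q-2j$ of $2$ in each denominator comes out right, and one must make sure the ``wrong'' harmonic product \eqref{harm} is not used by mistake in place of the $\zeta_2^\sharp$-version above.
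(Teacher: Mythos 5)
Your overall plan (run the $B_2$ analogue of the \cite{KMT-CJ} machinery and then clean up) is reasonable in spirit, but both of the two concrete identities you rely on are false, and the proof does not go through as written. The fatal step is your ``harmonic-type'' product formula
\begin{equation*}
\zeta(p)\zeta(q)=\zeta_2^\sharp(p,q)+\zeta_2^\sharp(q,p)+\zeta(p+q).
\end{equation*}
By \eqref{6-1}, $\zeta_2^\sharp(p,q)=\sum_{l,m\ge1}m^{-p}(2l+m)^{-q}$ runs only over pairs $(a,b)=(m,2l+m)$ with $a<b$ and $b-a$ \emph{even}; the change of variables $b=2l+m$, $a=m$ does not exhaust all pairs $a<b$, so the stuffle decomposition of $\zeta(p)\zeta(q)$ fails. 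Numerically, for $p=q=2$ one has $\zeta(2)^2=\pi^4/36$, while $2\zeta_2^\sharp(2,2)+\zeta(4)=\pi^4/160+\pi^4/90=5\pi^4/288\neq\pi^4/36$. Any stuffle-type identity for $\zeta_2^\sharp$ necessarily involves the alternating double series $\varphi_2$ as well, since $\zeta_2^\sharp=\tfrac12(\zeta_2+\varphi_2)$ (see the remark following Corollary \ref{parity-B2}). Because your claimed intermediate identity
$\zeta(p)\zeta(q)+(-1)^p\zeta_2^\sharp(p,q)+(-1)^q\zeta_2^\sharp(q,p)=(\text{the two binomial sums})$
combined with the true statement \eqref{Pr-2-1} would force exactly the false product formula above, that intermediate identity is false as well (for $p=q=2$ its two sides differ by $\pi^4/96$). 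In other words, your two errors cancel each other, which is why you land on the correct final formula, but neither step is valid, so there is a genuine gap rather than a repairable bookkeeping issue with powers of $2$.

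What the actual argument does differently: there is no elimination of $\zeta(p)\zeta(q)$ at all. One starts from the identity \cite[Equation (4.7)]{KMT-Pala} in the variables $(l,m)$ with an auxiliary exponent $s>1$, sets $x=-e^{i\theta}$, separates the $l+2m=0$ term, applies \cite[Lemma 6.2]{KMT-CJ} with $d=q$, and then puts $\theta=\pi$ and uses Lemma \ref{L-5-2}. The right-hand side then already contains the $-\zeta(s+p+q)$ term, and the left-hand side decomposes (splitting the $l<0$ range according to $l<m$, $m<l<2m$, $l>2m$) into the four values $\zeta_2(s,p,0,q;B_2)$, $(-1)^p\zeta_2(0,p,s,q;B_2)$, $(-1)^p\zeta_2(0,q,s,p;B_2)$, $(-1)^{p+q}\zeta_2(s,q,0,p;B_2)$ of the full $B_2$ zeta-function; letting $s\to0$ (justified by absolute convergence for $s>-1$) these collapse to $(1+(-1)^p)\zeta_2^\sharp(p,q)+(1+(-1)^q)\zeta_2^\sharp(q,p)$ after multiplying by $(-1)^p$. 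If you want to salvage your route, you must replace your product formula by the correct relation through $\varphi_2$, or better, track the four-term $B_2$ decomposition directly as above; the coefficients $(1+(-1)^p)$, $(1+(-1)^q)$ and the $-\zeta(p+q)$ then appear without any harmonic-product step.
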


Theorem \ref{T-B2-EZ} in the case that $p$ and $q$ are of different parity implies the following.

\begin{cor} \label{parity-B2}
Let $p,q \in \mathbb{N}_{\geq 2}$. Suppose $p$ and $q$ are of different parity, then   
$$\zeta_2^\sharp(p,q)\in \mathbb{Q}\left[ \left\{\zeta(j+1)\,|\,j\in \mathbb{N}\right\}\right],$$
which is a parity result for $\zeta_2^\sharp$. 
\end{cor}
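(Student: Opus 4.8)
The plan is to derive Corollary \ref{parity-B2} as a direct consequence of Theorem \ref{T-B2-EZ} together with the harmonic product formula for $\zeta_2^\sharp$. First I would record the harmonic product relation in the $B_2$-setting. Since $\zeta_2^\sharp(p,q)=\sum_{l,m\geq 1}m^{-p}(2l+m)^{-q}$ (see \eqref{6-1}), splitting the double sum over $m<n$, $m>n$, $m=n$ where $n=2l+m$ gives
\begin{equation*}
\Bigl(\sum_{m\geq 1\text{ odd}}\frac{1}{m^p}\Bigr)\Bigl(\sum_{n\geq 1\text{ odd}}\frac{1}{n^q}\Bigr)
=\zeta_2^\sharp(p,q)+\zeta_2^\sharp(q,p)+\sum_{m\geq 1\text{ odd}}\frac{1}{m^{p+q}},
\end{equation*}
so writing $\lambda(s)=(1-2^{-s})\zeta(s)$ for the sum over odd positive integers, we get $\lambda(p)\lambda(q)=\zeta_2^\sharp(p,q)+\zeta_2^\sharp(q,p)+\lambda(p+q)$. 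This is the $B_2$-analogue of \eqref{harm}.

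Next, assume without loss of generality that $p$ is even and $q$ is odd (the other case is symmetric). Then $1+(-1)^p=2$ and $1+(-1)^q=0$, so the left-hand side of \eqref{Pr-2-1} collapses to $2\zeta_2^\sharp(p,q)$. Dividing by $2$, Theorem \ref{T-B2-EZ} gives
\begin{equation*}
\zeta_2^\sharp(p,q)=\sum_{j=0}^{[p/2]}\frac{1}{2^{p+q-2j}}\binom{p+q-1-2j}{q-1}\zeta(2j)\zeta(p+q-2j)
+\sum_{j=0}^{[q/2]}\frac{1}{2^{p+q-2j}}\binom{p+q-1-2j}{p-1}\zeta(2j)\zeta(p+q-2j)-\frac{1}{2}\zeta(p+q).
\end{equation*}
Every term on the right is a rational multiple of a product $\zeta(2j)\zeta(p+q-2j)$ or of $\zeta(p+q)$, with $2j$ a nonnegative even integer and $p+q$ odd. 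Using $\zeta(0)=-1/2$ and the fact that $\zeta(2j)\in\mathbb{Q}\cdot\pi^{2j}$ for $j\geq 1$ (Euler), each such product lies in $\mathbb{Q}[\{\zeta(m+1)\mid m\in\mathbb{N}\}]$; indeed $\zeta(2j)\zeta(p+q-2j)$ is a rational multiple of $\pi^{2j}\zeta(p+q-2j)$ when $j\geq 1$, and $\pi^{2j}$ itself is a rational multiple of $\zeta(2j)$, which is in the algebra. Hence $\zeta_2^\sharp(p,q)\in\mathbb{Q}[\{\zeta(m+1)\mid m\in\mathbb{N}\}]$.

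The argument is essentially bookkeeping, so there is no serious obstacle; the only point requiring a little care is the placement of the indices. One must check that the arguments $p+q-2j$ appearing as arguments of $\zeta$ never equal $1$ (which would be a pole): since $j\leq[p/2]\leq p/2$ in the first sum we have $p+q-2j\geq q\geq 2$, and since $j\leq[q/2]$ in the second sum we have $p+q-2j\geq p\geq 2$, so all the $\zeta$-values that occur are legitimate. With this verified, combining the collapsed form of \eqref{Pr-2-1} with Euler's formula for $\zeta(2j)$ completes the proof; the harmonic product relation is not strictly needed for this corollary but is the natural companion identity and could be invoked instead of Euler's evaluation if one prefers to express the answer through $\lambda(p+q)$ and products $\lambda(2j)\lambda(p+q-2j)$.
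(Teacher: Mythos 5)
Your treatment of the case $p$ even, $q$ odd is correct and is exactly the paper's intended route: the left-hand side of \eqref{Pr-2-1} collapses to $2\zeta_2^\sharp(p,q)$, your check that every $\zeta$-argument $p+q-2j$ stays $\geq 2$ is the right thing to verify, and products of such values lie in $\mathbb{Q}[\{\zeta(j+1)\}]$. However, the assertion that ``the other case is symmetric'' is a genuine gap. Theorem \ref{T-B2-EZ} is not symmetric in the two slots of $\zeta_2^\sharp$: if $p$ is odd and $q$ is even, then whether you apply the theorem to the pair $(p,q)$ or to $(q,p)$, the surviving term is always $2\zeta_2^\sharp(q,p)$, i.e.\ the value with the \emph{even} exponent in the first slot (compare the paper's own example, where $(p,q)=(3,2)$ in \eqref{Pr-2-1} yields $\zeta_2^\sharp(2,3)$, not $\zeta_2^\sharp(3,2)$). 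So the theorem alone never isolates $\zeta_2^\sharp(\mathrm{odd},\mathrm{even})$, and your main argument proves only half of the corollary.

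Your fallback, the harmonic-type relation, would close this gap, but as stated it is false: in $\zeta_2^\sharp(p,q)=\sum_{l,m\geq1}m^{-p}(2l+m)^{-q}$ the pair $(m,n)$ with $n=2l+m$ runs over all $m<n$ with $n\equiv m\pmod 2$, so both-even pairs occur as well, and the diagonal $m=n$ contributes $\zeta(p+q)$, not $\lambda(p+q)$. The correct identity is
$$\zeta_2^\sharp(p,q)+\zeta_2^\sharp(q,p)+\zeta(p+q)=\lambda(p)\lambda(q)+2^{-p-q}\zeta(p)\zeta(q),\qquad \lambda(s)=(1-2^{-s})\zeta(s),$$
and your version already fails numerically at $p=q=2$ against $\zeta_2^\sharp(2,2)=\pi^4/320$ from Example \ref{B-EZ-Exam}. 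With the corrected identity — or with the paper's observation $\zeta_2^\sharp(s_1,s_2)=\tfrac12\{\zeta_2(s_1,s_2)+\varphi_2(s_1,s_2)\}$ combined with the Euler and Borwein--Borwein--Girgensohn parity results, as in the remark following the corollary — the missing case does follow: for $p$ odd and $q$ even, $\zeta_2^\sharp(q,p)$ lies in the algebra by your main argument, and every other term in the identity is a rational combination of zeta values at integers $\geq 2$, hence so is $\zeta_2^\sharp(p,q)$. You should state this step explicitly rather than appeal to symmetry.
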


\begin{remark}
This parity result for $\zeta_2^\sharp(p,q)$ is important in a recent study of the dimension of the linear space spanned by double zeta values of level $2$ given by Kaneko and Tasaka (see \cite{Ka-Ta}).

For example, setting $(p,q)=(3,2)$ in \eqref{Pr-2-1}, we have
\begin{align*}
\zeta_2^\sharp(2,3)&=\sum_{m,n=1}^\infty \frac{1}{n^{2} (2m+n)^{3}}=-\frac{21}{32}\zeta(5)+\frac{3}{8}\zeta(2)\zeta(3).
\end{align*}
It should be noted that this property can be given by combining the known facts for double zeta values and for their alternating series 
$$\varphi_2(s_1,s_2)=\sum_{m,n=1}^\infty \frac{(-1)^m}{n^{s_1}(m+n)^{s_2}}.$$
Actually we see that 
$$\zeta_2^\sharp(s_1,s_2)=\frac{1}{2}\left\{\zeta_2(s_1,s_2)+\varphi_2(s_1,s_2)\right\}.$$
When $p$ and $q$ are of different parity ($p,q \in \mathbb{N}$ and $q\geq 2$), Euler proved that 
$$\zeta_2(p,q)\in \mathbb{Q}\left[ \left\{\zeta(j+1)\,|\,j\in \mathbb{N}\right\}\right],$$
and Borwein et al. proved that 
$$\varphi_2(p,q)\in \mathbb{Q}\left[ \left\{\zeta(j+1)\,|\,j\in \mathbb{N}\right\}\right]$$
(see \cite{BBG}), from which Corollary \ref{parity-B2} follows.   
However \eqref{Pr-2-1} gives more explicit information on the parity result for $\zeta_2^\sharp(p,q)$. 
\end{remark}

Furthermore we can obtain the following result which can be regarded as an analogue of Theorem \ref{T-5-1} for type $B_3$. This can be proved similarly to Theorem \ref{T-5-1}, hence we omit its proof here.

\begin{thrm}\label{T-5-2}
For $a,b,c\in \mathbb{N}_{\geq 2}$,
\begin{align*}
  &(1+(-1)^a)\zeta_3^\sharp(a,b,c)+(1+(-1)^b)\{ \zeta_3^\sharp(b,a,c)+\zeta_3^\sharp(b,c,a)\}+(-1)^b(1+(-1)^c)\zeta_3^\sharp(c,b,a)\\
& =2^{1-a-b-c}\bigg\{ \sum_{\xi=0}^{[a/2]}2^\xi \zeta(2\xi)\sum_{\w=0}^{a-2\xi}\binom{\w+b-1}{\w}\binom{a+c-2\xi-\w-1}{c-1}\zeta_2(b+\w,a+c-2\xi-\w)\\
&  +\sum_{\xi=0}^{[b/2]}2^\xi\zeta(2\xi)\sum_{\w=0}^{a-1}\binom{\w+b-2\xi}{\w}\binom{a+c-\w-2}{c-1}\zeta_2(b-2\xi+\w+1,a+c-1-\w)\\
&  +(-1)^b\sum_{\xi=0}^{[c/2]}2^\xi\zeta(2\xi)\sum_{\w=0}^{c-2\xi}\binom{\w+b-1}{\w}\binom{a+c-2\xi-\w-1}{a-1}\zeta_2(b+\w,a+c-2\xi-\w)\\
&  +(-1)^b\sum_{\xi=0}^{[b/2]}2^\xi\zeta(2\xi)\sum_{\w=0}^{c-1}\binom{\w+b-2\xi}{\w}\binom{a+c-\w-2}{a-1}\zeta_2(b-2\xi+\w+1,a+c-1-\w)\bigg\}\\
&  -\zeta_2^\sharp(a+b,c)-(1+(-1)^b)\zeta_2^\sharp(b,a+c)-(-1)^b\zeta_2^\sharp(b+c,a).
\end{align*}
\end{thrm}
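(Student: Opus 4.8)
The plan is to repeat, with the obvious changes, the argument of Section~\ref{sec-proof2} that establishes Theorem~\ref{T-5-1}, replacing the root system of type $C_3$ by its dual $B_3$. In the $C_3$ computation the three relevant coroots $e_3,e_2,e_1$ of $(\Delta_{l+})^\vee$ yield the forms $n_1=m_3<n_2=m_2+m_3<n_3=m_1+m_2+m_3$; for $B_3$ one uses instead the short coroots $(\Delta_{s+})^\vee(B_3)=\{2e_3,2e_2,2e_1\}$ found in Section~\ref{sec-6}, which give $n_1=m_3<n_2=2m_2+m_3<n_3=2m_1+2m_2+m_3$, i.e.\ precisely the shape of $\zeta_3^\sharp$. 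Accordingly I would begin from the $B_3$-analogue of the generating identity~\eqref{5-1-0}: a product of one-variable series of the types $\sum_{l\in\mathbb{N}}x^l e^{il\t}$ and $\sum_{m\in\mathbb{Z}^*}y^{|m|}e^{im\t}$, evaluated at arguments built from two auxiliary real variables and chained to match the nesting $n_1<n_2<n_3$ above, and then let the radii $x,y\to 1-$. Since $a,b,c\ge 2$, the boundary series converge and this limit may be taken termwise (Abel summation); on the series side it reproduces exactly the combination $(1+(-1)^a)\zeta_3^\sharp(a,b,c)+(1+(-1)^b)\{\zeta_3^\sharp(b,a,c)+\zeta_3^\sharp(b,c,a)\}+(-1)^b(1+(-1)^c)\zeta_3^\sharp(c,b,a)$ on the left, the factors $1+(-1)^\bullet$ coming --- as in the proof of Proposition~\ref{Pr-1} --- from the reflection $m\mapsto -m$ in the $\mathbb{Z}^*$-sums, which is just the passage to the relevant minimal coset representatives of $W(B_3)\simeq W(C_3)$.

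The only genuinely new phenomenon compared with the $C_3$ case is the coefficient $2$ in the forms $n_2,n_3$. When one Fourier-expands the relevant one-variable series, it forces periodic Bernoulli functions of a doubled argument to appear, and the multiplication (distribution) relation $B_k(\{2t\})=2^{k-1}(B_k(\{t\})+B_k(\{t+1/2\}))$ is exactly what produces both the overall prefactor $2^{1-a-b-c}$ and the weights $2^\xi$ attached to $\zeta(2\xi)$ on the right-hand side. The remaining ingredients are identical to those in the proof of Theorem~\ref{T-5-1}: the same expansions of products of Bernoulli polynomials (equivalently, the same partial-fraction identities) give the inner double sums with the binomial coefficients $\binom{\w+b-1}{\w}$, $\binom{a+c-2\xi-\w-1}{c-1}$ and their companions multiplying the \emph{ordinary} double zeta values $\zeta_2(\cdot,\cdot)$ --- ordinary because the polylogarithmic series whose product produces them range over all of $\mathbb{N}$, so the even-step condition never enters them --- while Euler's formula~\eqref{Euler-F} rewrites the constants $\zeta(2\xi)$ along the way. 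The lower-depth terms of the decomposition, in which two exponents have been merged by a partial-fraction step, assemble into $-\zeta_2^\sharp(a+b,c)-(1+(-1)^b)\zeta_2^\sharp(b,a+c)-(-1)^b\zeta_2^\sharp(b+c,a)$; these keep the $\sharp$ because the two linear forms left in them still carry the factor $2$.

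The main obstacle is the $2$-power bookkeeping together with this ordinary-versus-$\sharp$ asymmetry: after the distribution relation is applied and the sums are recombined, one must check that $2^\xi$ (and not $2^{2\xi}$) accompanies each $\zeta(2\xi)$, that the global factor is exactly $2^{1-a-b-c}$, and that the inner double zetas emerge as plain $\zeta_2$ while only the depth-lowered terms acquire the $\sharp$; this is precisely where a stray power of $2$ or a sign is easiest to lose. A secondary, routine point is to justify interchanging the $x,y\to 1-$ limit with the conditionally convergent triple series in the degenerate cases where one of $a,b,c$ equals $2$, which is handled exactly as in the proof of Theorem~\ref{T-5-1}. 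As an independent check one may sidestep the generating-function computation: on the summation lattice $1=\tfrac14(1+(-1)^{n_1+n_2})(1+(-1)^{n_2+n_3})$, so $\zeta_3^\sharp(a,b,c)$ is one quarter of the sum of $\zeta_3(a,b,c)$ and three alternating triple series, and substituting Theorem~\ref{T-5-1} together with its alternating analogues --- obtained as in the Remark following Theorem~\ref{T-5-1} --- into that decomposition should reproduce the asserted identity.
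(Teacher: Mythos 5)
Your plan is essentially the paper's own argument: Theorem \ref{T-5-2} is stated there with the remark that it is proved by rerunning the Section \ref{sec-proof2} computation for Theorem \ref{T-5-1} with the $B_3$ linear forms $n$, $2m+n$, $2l+2m+n$, and your identification of the left-hand side combination, of the separated diagonal terms that yield the $\zeta_2^\sharp$ pieces, and of the ordinary $\zeta_2$'s in the main terms matches that computation. One correction of emphasis: in the paper's mechanism (visible in the proof of Theorem \ref{T-B2-EZ}) the powers of $2$ come from the doubled frequencies $e^{2im\theta}$ when \cite[Lemma 6.2]{KMT-CJ} and Lemma \ref{L-5-2} are applied, not from the Bernoulli distribution relation you invoke, so the precise constants $2^{1-a-b-c}$ and $2^{\xi}$ must be read off from that iteration rather than asserted.
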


\begin{remark}
In \cite{KMT-Lie}, we study zeta-functions of weight lattices of 
semisimple compact connected Lie groups.   We can prove analogues of Theorem \ref{thm:main1} 
for those zeta-functions by a method similar to the above. 
We will give the details in a forthcoming paper.
\end{remark}


\section{Certain restricted sum formulas for $\zeta_r({\bf s})$ and for $\zeta_r^\sharp({\bf s})$} \label{sumf}

In this section, we give certain restricted sum formulas for $\zeta_r({\bf s})$ and for $\zeta_r^\sharp({\bf s})$ of an arbitrary depth $r$ which essentially include known results. 

As we stated in Section \ref{sec-1}, 
Gangl, Kaneko and Zagier \cite{GKZ} obtained the restricted sum formulas \eqref{F-GKZ} for double zeta values. Recently 
Nakamura \cite{Na-Sh} gave certain analogues of \eqref{F-GKZ}. 

More recently, Shen and Cai \cite{Shen-Cai} gave the following restricted sum formulas for triple and fourth zeta values:
\begin{align}
& \sum_{a_1,a_2,a_3 \in \mathbb{N}\atop a_1+a_2+a_3=N}
\zeta_3(2a_1,2a_2,2a_3)= \frac{5}{8}\zeta(2N) - \frac{1}{4}\zeta(2)\zeta(2N - 2)\in \mathbb{Q}\cdot \pi^{2N}\quad (N\in \mathbb{Z}_{\geq 3}), \label{sumf-triple}\\
& \sum_{a_1,a_2,a_3,a_4 \in \mathbb{N}\atop a_1+a_2+a_3+a_4=N}
\zeta_4(2a_1,2a_2,2a_3,2a_4)\label{sumf-fourth}\\
& \quad = \frac{35}{64}\zeta(2N) - \frac{5}{16}\zeta(2)\zeta(2N - 2)\in \mathbb{Q}\cdot \pi^{2N}(N\in \mathbb{Z}_{\geq 4}). \notag
\end{align}
Also Machide \cite{Mach} gave certain restricted sum formulas for triple zeta values. 

Now recall our Corollaries \ref{Cor-Cr-Sr} and \ref{Cor-Br-Sr}.
In the above restricted sum formulas, the summations are taken over all tuples
$(a_1,\ldots,a_r)$ satisfying $a_1+\cdots+a_r=N$.   On the other hand, the summations
in the formulas of Corollaries \ref{Cor-Cr-Sr} and \ref{Cor-Br-Sr} are running over
much smaller range, that is, just all the permutations of one fixed
$(a_1,\ldots,a_r)$ with $a_1+\cdots+a_r=N$.    Therefore our Corollaries give
subdivisions, or refinements, of known restricted sum formulas.

Summing our formulas for all tuples $(a_1,\ldots,a_r)$ satisfying $a_1+\cdots+a_r=N$,
we can obtain the $r$-ple generalization of \eqref{F-GKZ}, \eqref{sumf-triple} and 
\eqref{sumf-fourth}.
Moreover we can show the following further generalization, which gives a new type of
restricted sum formulas.

For $d\in \mathbb{N}$ and $N\in \mathbb{N}$, let
$$I_{r}(d,N)=\left\{ (2da_1,\ldots,2da_r)\in (2d\mathbb{N})^r\,|\,a_1+\cdots+a_r=N\right\}.$$
Denote by $P_r$ the set of all partitions of $r$, namely
$$P_r=\bigcup_{\nu=1}^{r}\{(j_1,\cdots,j_\nu)\in \mathbb{N}^\nu\,|\,j_1+\cdots+j_\nu=r\}.$$
For $J=(j_1,\cdots,j_\nu)\in P_r$, we set
$$\mathcal{A}_r(d,N,J)=\left\{ ((2dh_1)^{[j_1]},\ldots,(2dh_\nu)^{[j_\nu]})\in I_{r}(d,N)\,|\, h_1<\cdots<h_\nu\right\},$$
where $(2h)^{[j]}=(2h,\ldots,2h)\in (2\mathbb{N})^j$. 
Then we have the following restricted sum formulas of depth $r$.

\begin{thrm} \label{sumf-EZ-Cr}
For $d\in \mathbb{N}$ and $N\in \mathbb{N}$ with $N\geq r$, 
\begin{align}
& \sum_{a_1,\ldots,a_r \in \mathbb{N} \atop a_1+\cdots+a_r=N}\zeta_r(2da_1,\ldots,2da_r)\label{R-sumf}\\
& =\frac{(-1)^{r}}{2^r}\sum_{J=(j_1,\cdots,j_\nu)\in P_r}\frac{1}{j_1!\cdots j_\nu !}\notag\\
& \qquad \times \sum_{(2d\mathbf{k})_l \in \mathcal{A}_r(d,N,J)}\mathcal{P}_{\Delta_{l+}}
((2d\mathbf{k})_l,{\bf 0};\Delta(C_r))  \prod_{\rho=1}^{r}\frac{(2\pi i)^{2dk_\rho}}{(2k_\rho)!} \in\mathbb{Q}\cdot \pi^{2dN}.\notag 
\end{align}
\end{thrm}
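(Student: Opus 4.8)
The plan is to deduce Theorem \ref{sumf-EZ-Cr} from Corollary \ref{Cor-Cr-Sr} by summing the refined formula \eqref{EZ-Sr-11} over an appropriate family of tuples. The left-hand side of \eqref{R-sumf} is a sum over \emph{all} tuples $(2da_1,\dots,2da_r)$ with $a_1+\cdots+a_r=N$, while \eqref{EZ-Sr-11} gives the value of the sum over all $\mathfrak{S}_r$-permutations of a single fixed tuple. So the first step is a bookkeeping step: group the tuples in $I_r(d,N)$ into $\mathfrak{S}_r$-orbits, observing that an orbit is determined by a partition-type $J=(j_1,\dots,j_\nu)\in P_r$ together with a strictly increasing sequence $h_1<\cdots<h_\nu$, i.e.\ exactly by an element of $\mathcal{A}_r(d,N,J)$. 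The total number of tuples in $I_r(d,N)$ is then $\sum_J \sum_{\mathcal{A}_r(d,N,J)} \binom{r}{j_1,\dots,j_\nu}$, since the orbit of a tuple of type $J$ has size $r!/(j_1!\cdots j_\nu!)$.

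Second, I would rewrite $\sum_{a_1+\cdots+a_r=N}\zeta_r(2da_1,\dots,2da_r)$ by partitioning this index set into orbits: for a fixed representative $(2d\mathbf{k})_l \in \mathcal{A}_r(d,N,J)$, the sum of $\zeta_r$ over that orbit equals $\frac{1}{j_1!\cdots j_\nu!}\sum_{\sigma\in\mathfrak{S}_r}\zeta_r(2dk_{\sigma^{-1}(1)},\dots,2dk_{\sigma^{-1}(r)})$ — the factor $1/(j_1!\cdots j_\nu!)$ corrects for the stabilizer, because as $\sigma$ runs over all of $\mathfrak{S}_r$ each distinct tuple in the orbit is hit exactly $j_1!\cdots j_\nu!$ times. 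Here one must be a little careful that $\zeta_r$ is a genuine function of the tuple (not of an ordered labeling), so that permuting indices that carry equal values really does reproduce the same value; this is immediate from the definition \eqref{e-1-1}. Applying \eqref{EZ-Sr-11} with $k_j$ replaced by $dk_j$ (valid since all $2dk_j \in 2\mathbb{N}$ and each $2dk_j \geq 2$, so the hypotheses of Corollary \ref{Cor-Cr-Sr}, hence of Theorem \ref{thm:main1}, are met) converts each inner orbit sum into $\frac{1}{j_1!\cdots j_\nu!}\cdot\frac{(-1)^r}{2^r}\mathcal{P}_{\Delta_{l+}}((2d\mathbf{k})_l,\mathbf{0};\Delta(C_r))\prod_{\rho=1}^r\frac{(2\pi i)^{2dk_\rho}}{(2k_\rho)!}$.

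Third, I would sum these contributions over all representatives $(2d\mathbf{k})_l\in\mathcal{A}_r(d,N,J)$ and over all $J\in P_r$, which gives precisely the right-hand side of \eqref{R-sumf}. The final rationality assertion $\in\mathbb{Q}\cdot\pi^{2dN}$ follows because each term lies in $\mathbb{Q}\cdot\pi^{2dN}$: indeed $\prod_\rho (2\pi i)^{2dk_\rho} = (2\pi i)^{2d\sum_\rho k_\rho} = (2\pi i)^{2dN}$ since $\sum_\rho k_\rho = \sum_\rho a_\rho = N$ for every representative (each orbit consists of tuples summing to $N$), and $\mathcal{P}_{\Delta_{l+}}((2d\mathbf{k})_l,\mathbf{0};\Delta(C_r))\in\mathbb{Q}$ as a Taylor coefficient of the rational generating function $F_{\Delta_{l+}}$; alternatively one invokes the membership $\zeta_r(2da_1,\dots,2da_r)\in\mathbb{Q}\cdot\pi^{2dN}$ already contained in \eqref{H-Z}.

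The routine parts are the orbit-counting and the $\mathbb{Q}\cdot\pi^{2dN}$ verification. The one genuine point requiring care — and the place where the proof could go wrong if stated carelessly — is the combinatorial matching between (a) the multiset of tuples appearing on the left of \eqref{R-sumf}, (b) the $\mathfrak{S}_r$-orbits, and (c) the index set $\bigcup_{J\in P_r}\mathcal{A}_r(d,N,J)$, together with the correct stabilizer-correction factor $1/(j_1!\cdots j_\nu!)$. Making sure that every tuple in $I_r(d,N)$ is counted once and only once on both sides, and that the multiplicities produced by running $\sigma$ over all of $\mathfrak{S}_r$ in \eqref{EZ-Sr-11} exactly cancel against these factors, is the heart of the argument; everything else is substitution into Corollary \ref{Cor-Cr-Sr}.
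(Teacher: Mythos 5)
Your proposal is correct and follows essentially the same route as the paper: decompose $I_r(d,N)$ into $\mathfrak{S}_r$-orbits indexed by $J\in P_r$ and representatives in $\mathcal{A}_r(d,N,J)$, correct by the stabilizer factor $1/(j_1!\cdots j_\nu!)$, and apply Corollary \ref{Cor-Cr-Sr} (formula \eqref{EZ-Sr-11}) with $k_j$ replaced by $dk_j$ to each orbit sum. The only cosmetic remark is that applying the corollary in this way yields $(2dk_\rho)!$ in the denominators (as in the paper's own proof), so the $(2k_\rho)!$ you carry over from the theorem statement is just an inherited typo, not a gap.
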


\begin{remark}
In the case $d=1$ and $r=2,3,4$, we essentially obtain \eqref{F-GKZ}, \eqref{sumf-triple}, \eqref{sumf-fourth}. Also, in the case $N=r$, we obtain \eqref{Zagier-F2} stated in Corollary \ref{Cor-Z}. More generally, 
in the case $d=1$ and $r\geq 2$, Muneta \cite{Mu} already conjectured an explicit expression of the left-hand side of \eqref{R-sumf} in terms of $\{\zeta(2k)\,|\,k\in \mathbb{N}\}$.
\end{remark}

\begin{proof}[Proof of Theorem \ref{sumf-EZ-Cr}] 
Let $(2da_1,\ldots,2da_r)\in I_{r}(d,N)$.
Denote a set of different elements in $\{a_1,\ldots,a_r\}$ by
$\{h_1,\ldots,h_\nu\}$,  and put
$j_\mu=\sharp \{ a_m\,|\, a_m=h_\mu\}$ $(1\leq \mu\leq \nu)$.
We may assume $h_1<\cdots<h_{\nu}$. 
We can easily see that there exist $\sigma\in \mathfrak{S}_r$ and 
$((2dh_1)^{[j_1]},\ldots,(2dh_\nu)^{[j_\nu]}) \in \mathcal{A}_r(d,N,J)$ with 
$J=(j_1,\cdots,j_\nu)\in P_r$ such that 
$$(2da_1,\ldots,2da_r)=((2dh_1)^{[j_1]},\ldots,(2dh_\nu)^{[j_\nu]})^\sigma,$$
where we use the notation
$$(k_1,\ldots,k_r)^\sigma=(k_{\sigma(1)},\ldots,k_{\sigma(r)}).$$
On the other hand, the set 
$\{\left( (2dh_1)^{[j_1]},\ldots,(2dh_\nu)^{[j_\nu]}\right)^\tau\,|\,\tau\in \mathfrak{S}_r\}$ contains $j_1!\cdots j_\nu!$-copies of each element. 
In fact, if we denote by $\frak{S}(1,...,j_1)$ the set of all permutations among
$\{1,...,j_1\}$, then 
$$\mathfrak{X}(J):=\frak{S}(1,\dots,j_1) \times \frak{S}(j_1+1,\ldots,j_1+j_2)
             \times\cdots\times\frak{S}(\sum_{\rho=1}^{\nu-1}j_\rho+1, \ldots,\sum_{\rho=1}^{\nu}j_\rho) \subset \frak{S}_r
$$
forms the stabilizer subgroup of $((2dh_1)^{[j_1]},\ldots,(2dh_\nu)^{[j_\nu]})$, 
and hence $\sharp \mathfrak{X}(J)=j_1!\cdots j_\nu!$. 
Therefore, using Corollary \ref{Cor-Cr-Sr}, we have 
\begin{align*}
& \sum_{a_1,\ldots,a_r \in \mathbb{N} \atop a_1+\cdots+a_r=N}
\zeta_r(2da_1,\ldots,2da_r)=\sum_{(2da_1,\ldots,2da_r)\in I_{r}(d,N)}\zeta_r(2da_1,\ldots,2da_r) \\
& =\sum_{J=(j_1,\cdots,j_\nu)\in P_r}\frac{1}{j_1!\cdots j_\nu !}\sum_{(2dk_1,\ldots,2dk_r) \atop \in \mathcal{A}_r(d,N,J)}\sum_{\sigma\in \mathfrak{S}_r}\zeta_r(2dk_{\sigma(1)},\ldots,2dk_{\sigma(r)})\\
& =\frac{(-1)^{r}}{2^r}\sum_{J=(j_1,\cdots,j_\nu)\in P_r}\frac{1}{j_1!\cdots j_\nu !}\sum_{(2d\mathbf{k})_l \in \mathcal{A}_r(d,N,J)}\mathcal{P}_{\Delta_{l+}(C_r)}
((2d\mathbf{k})_l,{\bf 0};\Delta)  \prod_{\rho=1}^{r}\frac{(2\pi i)^{2dk_\rho}}{(2dk_\rho)!}.
\end{align*}
This completes the proof.
\end{proof}

Similarly, using Corollary \ref{Cor-Br-Sr}, we obtain the following.

\begin{thrm} \label{sumf-EZ-Br}
For $d\in \mathbb{N}$ and $N\in \mathbb{N}$ with $N\geq r$, 
\begin{align*}
& \sum_{a_1,\ldots,a_r \in \mathbb{N} \atop a_1+\cdots+a_r=N}\zeta_r^\sharp (2da_1,\ldots,2da_r)\\
& =\frac{(-1)^{r}}{2^r}\sum_{J=(j_1,\cdots,j_\nu)\in P_r}\frac{1}{j_1!\cdots j_\nu !}\notag\\
& \quad \times \sum_{(2d\mathbf{k})_s \in \mathcal{A}_r(d,N,J)}\mathcal{P}_{\Delta_{s+}}
((2d\mathbf{k})_s,{\bf 0};\Delta(B_r))  \prod_{\rho=1}^{r}\frac{(2\pi i)^{2dk_\rho}}{(2k_\rho)!} \in\mathbb{Q}\cdot \pi^{2dN}.
\end{align*}
\end{thrm}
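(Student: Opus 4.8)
The plan is to carry out the same combinatorial reduction used in the proof of Theorem~\ref{sumf-EZ-Cr}, replacing Corollary~\ref{Cor-Cr-Sr} by its type $B_r$ analogue, Corollary~\ref{Cor-Br-Sr}. First I would partition the summation range $I_r(d,N)$ of the left-hand side according to the multiset of values of $(a_1,\ldots,a_r)$: given $(2da_1,\ldots,2da_r)\in I_r(d,N)$, write its distinct values as $h_1<\cdots<h_\nu$ with multiplicities $j_\mu=\sharp\{m\,|\,a_m=h_\mu\}$, so that $J=(j_1,\ldots,j_\nu)\in P_r$ and the sorted representative $((2dh_1)^{[j_1]},\ldots,(2dh_\nu)^{[j_\nu]})$ lies in $\mathcal{A}_r(d,N,J)$; every tuple of $I_r(d,N)$ then arises as a permutation of exactly one such representative.

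Next, just as in the $C_r$ case, I would note that for a fixed representative $(2d\mathbf{k})_s=((2dh_1)^{[j_1]},\ldots,(2dh_\nu)^{[j_\nu]})$ the $\mathfrak{S}_r$-orbit lists each of its entries with multiplicity equal to the order of the stabilizer subgroup $\mathfrak{X}(J)=\mathfrak{S}(1,\ldots,j_1)\times\cdots\times\mathfrak{S}(\sum_{\rho=1}^{\nu-1}j_\rho+1,\ldots,\sum_{\rho=1}^{\nu}j_\rho)$, which has order $j_1!\cdots j_\nu!$. This gives
$$\sum_{a_1,\ldots,a_r\in\mathbb{N}\atop a_1+\cdots+a_r=N}\zeta_r^\sharp(2da_1,\ldots,2da_r)=\sum_{J=(j_1,\ldots,j_\nu)\in P_r}\frac{1}{j_1!\cdots j_\nu!}\sum_{(2d\mathbf{k})_s\in\mathcal{A}_r(d,N,J)}\ \sum_{\sigma\in\mathfrak{S}_r}\zeta_r^\sharp(2dk_{\sigma(1)},\ldots,2dk_{\sigma(r)}).$$
Since $d,k_\rho\geq 1$ forces $2dk_\rho\geq 2$, each inner series converges absolutely, so the case $\mathbf{y}=\mathbf{0}$ of Corollary~\ref{Cor-Br-Sr}, i.e. \eqref{EZ-Sr-1-2}, applies termwise: the inner $\mathfrak{S}_r$-sum equals $\frac{(-1)^r}{2^r}\mathcal{P}_{\Delta_{s+}}((2d\mathbf{k})_s,\mathbf{0};\Delta(B_r))\prod_{\rho=1}^{r}\frac{(2\pi i)^{2dk_\rho}}{(2k_\rho)!}$, which lies in $\mathbb{Q}\cdot\pi^{2dN}$ because $\sum_\rho 2dk_\rho=2dN$. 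Substituting this into the displayed identity yields the claimed formula and the asserted membership.

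I do not expect a genuinely new obstacle: the whole argument is the $C_r$ proof with the long-root set replaced by the short-root set, and what makes it legitimate is precisely that $W(B_r)\simeq W(C_r)$ with the sign-flip factors acting trivially on the coordinates $\mathbf{s}_s$ indexing $(\Delta_{s+})^\vee$ — the same hypothesis already used to derive Corollary~\ref{Cor-Br-Sr} from Theorem~\ref{thm:main1}. The only point worth an explicit line of verification is that $\mathcal{P}_{\Delta_{s+}}((2d\mathbf{k})_s,\mathbf{0};\Delta(B_r))\in\mathbb{Q}$, which is immediate from Theorem~\ref{thm:main0}: $F_{\Delta_{s+}}(\mathbf{t}_{\Delta_{s+}},\mathbf{0};\Delta(B_r))$ is holomorphic at the origin with rational Taylor coefficients. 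Everything else is the bookkeeping over partitions carried out verbatim as in the proof of Theorem~\ref{sumf-EZ-Cr}.
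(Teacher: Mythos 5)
Your proposal is correct and follows essentially the same route as the paper, which proves Theorem \ref{sumf-EZ-Cr} by exactly this partition-and-stabilizer bookkeeping and then obtains Theorem \ref{sumf-EZ-Br} by the verbatim substitution of Corollary \ref{Cor-Br-Sr} for Corollary \ref{Cor-Cr-Sr}. Nothing further is needed.
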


\section{Analytically closed subclass}\label{sec-acs}

In this section we observe our theory from the analytic point of view.

First consider the case of type $C_r$.   In Section \ref{sec-4} we have shown that
the zeta-functions corresponding to the sub-root system of type $C_r$ consisting
of all long roots are exactly the family of Euler-Zagier sums.   On the other hand,
it is known that the Euler-Zagier $r$-fold sum can be expressed as an integral
involving the Euler-Zagier $(r-1)$-fold sum in the integrand.   In fact, it holds 
that
\begin{align}\label{acs-1}
\zeta_r(s_1,\ldots,s_r)=\frac{1}{2\pi i}\int_{(\kappa)}\frac{\Gamma(s_r+z)\Gamma(-z)}
{\Gamma(s_r)}\zeta_{r-1}(s_1,\ldots,s_{r-2},s_{r-1}+s_r+z)\zeta(-z)dz
\end{align}
for $r\geq 2$, where $-\Re s_r<\kappa<-1$ and the path of integral is the vertical line
from $\kappa-i\infty$ to $\kappa+i\infty$ (see \cite[Section 12]{Mat-NMJ}, 
\cite[Section 3]{Mat-JNT}).    This formula is proved by applying the classical
Mellin-Barnes integral formula (\eqref{acs-2} below), 
so we may call \eqref{acs-1} the Mellin-Barnes
integral expression of $\zeta_r(s_1,\ldots,s_r)$.

Formula \eqref{acs-1} implies that the family of Euler-Zagier sums is closed
under the Mellin-Barnes integral operation.   (Note that the Riemann zeta-function,
also appearing in the integrand, is the Euler-Zagier sum with $r=1$.)
When some family of zeta-functions is closed in this sense, we call the family
{\it analytically closed}.
The aim of this section is to prove that the subclasses of type $B_r$
and of type $A_r$ discussed in our theory are both analytically closed.

\begin{prop}\label{prop-acs-1}
The family of zeta-functions $\zeta_r({\bf s}_s,{\bf 0};\Delta_{s+}(B_r))$ defined by
\eqref{B2-zeta} is analytically closed.
\end{prop}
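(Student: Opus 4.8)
The statement to prove is that the family $\zeta_r^\sharp(\mathbf{s}) = \zeta_r(\mathbf{s}_s,\mathbf{0};\Delta_{s+}(B_r))$ is analytically closed, i.e. that there is a Mellin--Barnes type formula expressing $\zeta_r^\sharp$ in terms of $\zeta_{r-1}^\sharp$ and the Riemann zeta-function. The natural approach is to mimic the derivation of \eqref{acs-1} for ordinary Euler--Zagier sums, but starting from the explicit series \eqref{B2-zeta}. First I would recall the classical Mellin--Barnes formula
\begin{equation*}
(1+\lambda)^{-s}=\frac{1}{2\pi i}\int_{(\kappa)}\frac{\Gamma(s+z)\Gamma(-z)}{\Gamma(s)}\lambda^z\,dz\qquad(\Re s>0,\ -\Re s<\kappa<0,\ |\arg\lambda|<\pi),
\end{equation*}
which is the tool ($\eqref{acs-2}$ in the paper) used to prove \eqref{acs-1}.

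The key step is to isolate, in the defining series
\begin{equation*}
\zeta_r^\sharp(s_1,\ldots,s_r)=\sum_{m_1,\ldots,m_r=1}^\infty\prod_{i=1}^r\frac{1}{\bigl(2\sum_{j=r-i+1}^{r-1}m_j+m_r\bigr)^{s_i}},
\end{equation*}
the innermost new summation variable. Writing $M=2m_2+\cdots+2m_{r-1}+m_r$ for the partial sum appearing in the $(r-1)$ inner factors (more precisely, the linear form that runs through the factor indexed by $i=r$ is $2m_1+M$, while the remaining $r-1$ factors depend only on $m_2,\ldots,m_r$), I would split off the last factor $(2m_1+M)^{-s_r}$. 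Setting $\lambda=2m_1/M$ and applying the Mellin--Barnes formula to $(2m_1+M)^{-s_r}=M^{-s_r}(1+2m_1/M)^{-s_r}$, the $m_1$-sum becomes $\sum_{m_1\ge1}(2m_1)^z=2^z\zeta(-z)$, and the remaining factors $M^{-s_{r-1}-s_r-z}$ together with the other $r-2$ factors are exactly the series for $\zeta_{r-1}^\sharp(s_1,\ldots,s_{r-2},s_{r-1}+s_r+z)$, because the index shift $m_i\mapsto m_{i+1}$ converts the linear forms $2\sum_{j=r-i+1}^{r-1}m_j+m_r$ (for $1\le i\le r-1$) into the corresponding forms for depth $r-1$. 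One must check that the convergence region of the original series and the admissible range of $\kappa$ overlap so that interchanging sum and integral is justified; this is the same kind of estimate as in \cite{Mat-NMJ,Mat-JNT} and presents no new difficulty. The outcome is a formula of the shape
\begin{equation*}
\zeta_r^\sharp(s_1,\ldots,s_r)=\frac{1}{2\pi i}\int_{(\kappa)}\frac{\Gamma(s_r+z)\Gamma(-z)}{\Gamma(s_r)}\,2^z\,\zeta_{r-1}^\sharp(s_1,\ldots,s_{r-2},s_{r-1}+s_r+z)\,\zeta(-z)\,dz.
\end{equation*}

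The only point requiring genuine care — the part I expect to be the main obstacle — is the bookkeeping of the linear forms and of which factor is ``innermost''. In the $C_r$ (Euler--Zagier) case the nesting is transparent; here the coefficient $2$ in front of all but the last variable, and the particular pattern $2\sum_{j=r-i+1}^{r-1}m_j+m_r$, must be tracked carefully so that after splitting off one variable the remaining series is genuinely of type $B_{r-1}$ and not some other root subset. Once the index relabelling is verified, the extra factor $2^z$ is harmless (it can be absorbed, if desired, by a contour shift or left as is), and the family is seen to be closed under this Mellin--Barnes operation, with the base case $r=1$ being the Riemann zeta-function itself. This completes the plan; the remaining verifications (absolute convergence, permissible $\kappa$, justification of the interchange) are routine and parallel to the type $A_r$ argument carried out in the next proposition.
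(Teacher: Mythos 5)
Your proposal follows essentially the same route as the paper's own proof: split off the factor $(2(m_1+\cdots+m_{r-1})+m_r)^{-s_r}$, apply the Mellin--Barnes formula \eqref{acs-2} with $\lambda=2m_1/(2(m_2+\cdots+m_{r-1})+m_r)$, separate the $m_1$-sum into $2^z\zeta(-z)$, and recognize the remaining sum as $\zeta_{r-1}^{\sharp}(s_1,\ldots,s_{r-2},s_{r-1}+s_r+z)$, yielding exactly the integral expression \eqref{acs-3}. Your bookkeeping of the linear forms and the factor $2^z\zeta(-z)$ is correct, so the argument is sound and matches the paper.
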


\begin{proof}
Recall the Mellin-Barnes formula
\begin{align}\label{acs-2}
(1+\lambda)^{-s}=\frac{1}{2\pi i}\int_{(\kappa)}\frac{\Gamma(s+z)\Gamma(-z)}
{\Gamma(s)}\lambda^z dz,
\end{align}
where $s,\lambda\in\mathbb{C}$ with $\Re s>0$, $\lambda\neq 0$,
$|\arg\lambda|<\pi$, $\kappa$ is real with $-\Re s<\kappa<0$.

Dividing the factor $(2(m_1+\cdots+m_{r-1})+m_r)^{-s_r}$ as
$$
(2(m_2+\cdots+m_{r-1})+m_r)^{-s_r}
\left(1+\frac{2m_1}{2(m_2+\cdots+m_{r-1})+m_r}\right)^{-s_r}
$$
and applying \eqref{acs-2} to the second factor with
$\lambda=2m_1/(2(m_2+\cdots+m_{r-1})+m_r)$, we obtain
\begin{align}
&\zeta_r((s_1,\ldots,s_r),{\bf 0};\Delta_{s+}(B_r))\label{acs-3}\\
&=\frac{1}{2\pi i}\int_{(\kappa)}\frac{\Gamma(s_r+z)\Gamma(-z)}{\Gamma(s_r)}
  \sum_{m_1,\ldots,m_r=1}^{\infty}\prod_{i=1}^{r-1}\frac{1}
  {(2\sum_{j=r-i+1}^{r-1}m_j+m_r)^{s_i}}\notag\\
&\qquad\times(2(m_2+\cdots+m_{r-1})+m_r)^{-s_r}
  \left(\frac{2m_1}{2(m_2+\cdots+m_{r-1})+m_r}\right)^z dz\notag\\
&=\frac{1}{2\pi i}\int_{(\kappa)}\frac{\Gamma(s_r+z)\Gamma(-z)}{\Gamma(s_r)}
  \sum_{m_1=1}^{\infty}(2m_1)^z\notag\\
&\;\times\sum_{m_2,\ldots,m_r=1}^{\infty}
  \prod_{i=1}^{r-2}\frac{1}{(2\sum_{j=r-i+1}^{r-1}m_j+m_r)^{s_i}}
  (2(m_2+\cdots+m_{r-1})+m_r)^{-s_{r-1}-s_r-z}dz\notag\\
&=\frac{1}{2\pi i}\int_{(\kappa)}\frac{\Gamma(s_r+z)\Gamma(-z)}{\Gamma(s_r)}
   2^z \zeta(z)
   \zeta_{r-1}((s_1,\ldots,s_{r-2},s_{r-1}+s_r+z),{\bf 0};\Delta_{s+}(B_{r-1}))dz.\notag
\end{align} 
This implies the assertion.
\end{proof}

Next we consider the subclass of type $A_r$ which we studied in \cite{KMT-MZ},
and prove that it is also analytically closed.   This part may be regarded as
a supplement of \cite{KMT-MZ}.

The explicit form of the zeta-function of the root system of type $A_r$ is given by
\begin{align}\label{acs-4}
\zeta_r(\mathbf{s},\mathbf{0};\Delta(A_r))=\sum_{m_1,\ldots,m_r=1}^{\infty}\prod_{h=1}^r
\prod_{j=h}^r\left(\sum_{k=h}^{r+h-j}m_k\right)^{-s_{hj}}
\end{align}
(where $\mathbf{s}=(s_{hj})_{h,j}$; see \cite[formula (13)]{KMT-MZ}).   
Let $a,b\in\mathbb{N}$, $c\in\mathbb{N}_0$ with $a+b+c=r$.
The main result in \cite{KMT-MZ} asserts
that the shuffle product procedure can be completely described by the partial
fraction decomposition of zeta-functions \eqref{acs-4} at special values 
$\mathbf{s}=\mathbf{d}=(d_{hj})_{h,j}$, where $d_{hj}$ for
\begin{align}\label{acs-5}
\left\{
\begin{array}{lll}h=1,\;1\leq j\leq c\\
h=1,\;b+c+1\leq j\leq a+b+c\\
h=a+1,\;a+c+1\leq j\leq a+b+c
\end{array}
\right.
\end{align}
are all positive integers, and all other $d_{hj}$ are equal to 0.
Let $\Delta_+^{(a,b,c)}=\Delta_+^{(a,b,c)}(A_r)$ be the set of all positive roots 
corresponding to $s_{hj}$
with $(h,j)$ in the list \eqref{acs-5}.   Then this is a root set, and the
above special values can be interpreted as special values of zeta-functions of
$\Delta_+^{(a,b,c)}$.
 
\begin{thrm}\label{Th-A}
The family of zeta-functions $\zeta_r(\mathbf{s}^{(a,b,c)},\mathbf{0};
\Delta_+^{(a,b,c)}(A_r))$ is analytically closed, where
$\mathbf{s}^{(a,b,c)}=(s_{hj})_{h,j}$ with $(h,j)$ in the list \eqref{acs-5}.
\end{thrm}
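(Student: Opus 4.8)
The plan is to run the same Mellin--Barnes machinery as in the proof of Proposition \ref{prop-acs-1}, applied to the explicit multiple series \eqref{acs-4} restricted to the root set $\Delta_+^{(a,b,c)}$, and to argue by induction on $r$ with the Riemann zeta-function (the $r=1$ Euler--Zagier sum) as the base case.

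First I would write out $\zeta_r(\mathbf{s}^{(a,b,c)},\mathbf{0};\Delta_+^{(a,b,c)}(A_r))$ in terms of the partial sums $n_i=m_1+\cdots+m_i$: grouping the three blocks of \eqref{acs-5}, the first block contributes the factors $\prod_{j=1}^{c}n_{r+1-j}^{-s_{1j}}$, the second block $\prod_{i=1}^{a}n_i^{-s_{1,r+1-i}}$, and the third block $\prod_{i=1}^{b}(n_{a+i}-n_a)^{-s_{a+1,r+a+1-(a+i)}}$. The key structural observation is that, when $c\ge 1$, the variable $m_r$ occurs in only one of these factors, namely $n_r^{-s_{11}}$. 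Hence one can split $n_r=n_{r-1}+m_r$, apply the Mellin--Barnes formula \eqref{acs-2} with $\lambda=m_r/n_{r-1}$ and $s=s_{11}$, and then carry out the sum over $m_r$, which produces a factor $\zeta(-z)$. When $c\ge 2$, the resulting factor $n_{r-1}^{-s_{11}-z}$ merges with the block-one factor $n_{r-1}^{-s_{12}}$, and the remaining $(r-1)$-fold sum is exactly $\zeta_{r-1}(\mathbf{s}',\mathbf{0};\Delta_+^{(a,b,c-1)}(A_{r-1}))$, where $\mathbf{s}'$ has top component $s_{11}+s_{12}+z$ and all other components inherited from $\mathbf{s}^{(a,b,c)}$; this is again a member of the family, with $r$ lowered by one.

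It remains to treat the degenerate parameter values. When $c=0$ the root set splits and the series factors as a product $\zeta_a(\cdots)\,\zeta_b(\cdots)$ of two Euler--Zagier sums; when $c=1$, the reduction above leaves a sum in which $n_{a+b}$ occurs in two surviving factors (namely $n_{a+b}^{-s_{11}-z}$ and $(n_{a+b}-n_a)^{-s_{a+1,a+2}}$), and after splitting the summation at $n_a$ and applying \eqref{acs-2} once more one again lands on an integral of a product of Euler--Zagier sums. In either case one invokes the fact, embodied in \eqref{acs-1}, that the Euler--Zagier family is analytically closed and contains the Riemann zeta-function. Thus the Mellin--Barnes transform of every member of the family $\{\zeta_r(\mathbf{s}^{(a,b,c)},\mathbf{0};\Delta_+^{(a,b,c)}(A_r))\}$ lands, up to $\Gamma$-quotients and the contour integration, back inside the same family (enlarged, for the boundary cases, by the Euler--Zagier sums), which is the assertion.

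The main obstacle I anticipate is bookkeeping: one must track carefully how the index data $(a,b,c)$ and the exponent tuple transform under each peeling step, verify that the various boundary cases (small $a$, $b$, $c$, or exponents dropping to $1$) do not escape the class, and --- as always in this circle of ideas --- justify the manipulations analytically, i.e.\ establish absolute convergence of the multiple series in the relevant region and choose, and if necessary shift, the contour $\kappa$ so that the interchange of summation and integration and the use of \eqref{acs-2} are legitimate. The formal identity is immediate; turning it into a rigorous analytic identity, uniformly in the parameters, is where the care is needed.
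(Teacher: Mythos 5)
Your argument is correct in substance and uses the same engine as the paper (the Mellin--Barnes splitting \eqref{acs-2} applied to the explicit series \eqref{acs-7}), but it organizes the induction differently, so a comparison is worth recording. The paper fixes $(a,b,c)$ and shows that each of the three depth-$(r+1)$ members with data $(a+1,b,c)$, $(a,b+1,c)$, $(a,b,c+1)$ has a Mellin--Barnes expression in terms of the $(a,b,c)$ member: the $c$-direction (its case (iii)) is exactly the single-integral peeling you perform, while the $a$- and $b$-directions force one to separate the variable $m_{a+1}$ resp.\ $m_{a+b+1}$, which also occurs in all $c$ block-one factors, so there the paper needs a $(c+1)$-fold Mellin--Barnes integral producing a factor $\zeta(-z_2-\cdots-z_{c+1}-z)$. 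You instead always peel the last variable $m_r$, i.e.\ only ever move in the $c$-direction, so every generic step is a single application of \eqref{acs-2}; the price is the boundary analysis at $c=1$ and $c=0$, which you correctly identify and supply: after the first peel in the $c=1$ case the leftover factor $n_{a+b}^{-s_{11}-z}$ has no block-one factor to merge with, and one further application of \eqref{acs-2} to $n_{a+b}=n_a+(n_{a+b}-n_a)$ decouples the two blocks, while at $c=0$ the member factors as $\zeta_a(\cdots)\zeta_b(\cdots)$ and \eqref{acs-1} applies to one factor. Your route buys economy (no multi-fold integrals except the double integral at $c=1$); the paper's buys the symmetric statement that the family is stable under reduction along each of the three parameter directions. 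One point should be tightened: your closing sentence claims closure only for the family ``enlarged by the Euler--Zagier sums,'' which is weaker than Theorem \ref{Th-A}; in fact no enlargement is needed, because the products of Euler--Zagier sums you land on at the boundary are themselves members of the family --- $\zeta_a(\cdots)\zeta_b(\cdots)$ with $a,b\ge 1$ is precisely the $(a,b,0)$ member --- and applying \eqref{acs-1} to one factor of such a product yields the $(a,b-1,0)$ (or $(a-1,b,0)$) member together with gamma and Riemann zeta factors, terminating at products of Riemann zeta values, exactly the terminal objects the paper also allows. With that identification your induction stays inside the family throughout and proves the theorem as stated; the analytic caveats you list (absolute convergence, choice and shifting of contours) are treated in the paper at the same level of brevity as in Proposition \ref{prop-acs-1}, so they are not a point of difference.
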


\begin{proof}
We prove that zeta-functions $\zeta_{r+1}$ belonging to the above family can be
expressed as a Mellin-Barnes integral, or multiple integrals, involving
$\zeta_r$ also belonging to the above family.
Let $a,b\in\mathbb{N}$, $c\in\mathbb{N}_0$ with $a+b+c=r$.
We show that all of the zeta-functions $\zeta_{r+1}$ associated with
(i) $\Delta_+^{(a+1,b,c)}$, (ii) $\Delta_+^{(a,b+1,c)}$, (iii) $\Delta_+^{(a,b,c+1)}$
have integral expressions involving the zeta-function of $\Delta_+^{(a,b,c)}$.

From \eqref{acs-4} we see that
\begin{align}
\zeta_r(\mathbf{s}^{(a,b,c)},\mathbf{0};                    
\Delta_+^{(a,b,c)}(A_r))
&=\sum_{m_1,\ldots,m_{a+b+c}=1}^{\infty}
      \prod_{j=1}^c(m_1+m_2+\cdots+m_{a+b+c+1-j})^{-s_{1j}}\label{acs-6}\\
&\times\prod_{j=b+c+1}^{a+b+c}(m_1+m_2+\cdots+m_{a+b+c+1-j})^{-s_{1j}}\notag\\
&\times\prod_{j=a+c+1}^{a+b+c}(m_{a+1}+m_{a+2}+\cdots+m_{2a+b+c+1-j})
      ^{-s_{a+1,j}},
\notag
\end{align}
which is, by renaming the variables, 
\begin{align}
=&\sum_{m_1,\ldots,m_{a+b+c}=1}^{\infty}(m_1+\cdots+m_{a+b+1})^{-s_{11}}\cdots
        (m_1+\cdots+m_{a+b+c})^{-s_{1c}}\label{acs-7}\\
&\times m_1^{-s_{21}}(m_1+m_2)^{-s_{22}}\cdots(m_1+\cdots+m_a)^{-s_{2a}}\notag\\
&\times m_{a+1}^{-s_{31}}(m_{a+1}+m_{a+2})^{-s_{32}}\cdots
        (m_{a+1}+\cdots+m_{a+b})^{-s_{3b}}.\notag
\end{align}

Now we consider the above three cases (i), (ii) and (iii) separately.

The simplest case is (iii).   When we replace $c$ by $c+1$ in \eqref{acs-7},
the differences are that the summation is now with respect to
$m_1,\ldots,m_{a+b+c+1}$, and a new factor
$(m_1+\cdots+m_{a+b+c+1})^{-s_{1,c+1}}$ appears.
Dividing this factor as
\begin{align*}
\lefteqn{(m_1+\cdots+m_{a+b+c+1})^{-s_{1,c+1}}}\\
&=(m_1+\cdots+m_{a+b+c})^{-s_{1,c+1}}
\left(1+\frac{m_{a+b+c+1}}{m_1+\cdots+m_{a+b+c}}\right)^{-s_{1,c+1}}
\end{align*}
and apply \eqref{acs-2} as in the argument of \eqref{acs-3},
we find that the sum with respect to $m_{a+b+c+1}$ is separated, which produces
a Riemann zeta factor, and hence the zeta-function of $\Delta_+^{(a,b,c+1)}$
can be expressed as an integral of Mellin-Barnes type, involving gamma factors,
a Riemann zeta factor, and the zeta-function of $\Delta_+^{(a,b,c)}$.

Next consider the case (ii).   When we replace $b$ by $b+1$, \eqref{acs-7} is
changed to
\begin{align}                                                                           
=&\sum_{m_1,\ldots,m_{a+b+c+1}=1}^{\infty}(m_1+\cdots+m_{a+b+2})^{-s_{11}}\cdots
        (m_1+\cdots+m_{a+b+c+1})^{-s_{1c}}\label{acs-8}\\                                      
&\times m_1^{-s_{21}}(m_1+m_2)^{-s_{22}}\cdots(m_1+\cdots+m_a)^{-s_{2a}}\notag\\        
&\times m_{a+1}^{-s_{31}}(m_{a+1}+m_{a+2})^{-s_{32}}\cdots                              
        (m_{a+1}+\cdots+m_{a+b})^{-s_{3b}}\notag\\
&\times(m_{a+1}+\cdots+m_{a+b+1})^{-s_{3,b+1}}. \notag                            
\end{align}
The last factor is
\begin{align}
&=(m_{a+1}+\cdots+m_{a+b})^{-s_{3,b+1}}\left(1+\frac{m_{a+b+1}}
    {m_{a+1}+\cdots+m_{a+b}}\right)^{-s_{3,b+1}}\label{acs-9}\\
&=(m_{a+1}+\cdots+m_{a+b})^{-s_{3,b+1}}\notag\\
&\qquad\times\frac{1}{2\pi i}\int_{(\kappa)}\frac{\Gamma(s_{3,b+1}+z)
   \Gamma(-z)}{\Gamma(s_{3,b+1})}\left(\frac{m_{a+b+1}}{m_{a+1}+\cdots+m_{a+b}}
   \right)^z dz.\notag
\end{align}
The factors $(m_1+\cdots+m_{a+b+n})^{-s_{1,n-1}}$ ($2\leq n\leq c+1$) also
include the term $m_{a+b+1}$.   We divide these factors as
\begin{align*}
\lefteqn{(m_1+\cdots+m_{a+b}+m_{a+b+2}+\cdots+m_{a+b+n})^{-s_{1,n-1}}}\\
&\times\left(1+\frac{m_{a+b+1}}{m_1+\cdots+m_{a+b}+m_{a+b+2}+\cdots+m_{a+b+n}}
\right)^{-s_{1,n-1}}
\end{align*}
and apply \eqref{acs-2} to obtain
\begin{align}
\lefteqn{(m_1+\cdots+m_{a+b+n})^{-s_{1,n-1}}}\label{acs-10}
\\
&=(m_1+\cdots+m_{a+b}+m_{a+b+2}+\cdots+m_{a+b+n})^{-s_{1,n-1}}\notag\\
&\times\frac{1}{2\pi i}\int_{(\kappa_n)}\frac{\Gamma(s_{1,n-1}+z_n)                     
   \Gamma(-z_n)}{\Gamma(s_{1,n-1})}\left(\frac{m_{a+b+1}}
   {m_1+\cdots+m_{a+b}+m_{a+b+2}+\cdots+m_{a+b+n}}\right)^{z_n}dz_n \notag
\end{align}
for $2\leq n\leq c+1$.   Substituting \eqref{acs-9} and \eqref{acs-10} into
\eqref{acs-8}, we find that the sum with respect to $m_{a+b+1}$ is separated 
and gives a Riemann zeta factor
$\zeta(-z_2-\cdots-z_{c+1}-z)$.
Since the remaining sum produces the zeta-function of 
$\Delta_+^{(a,b,c)}$, we obtain that the zeta-function of 
$\Delta_+^{(a,b+1,c)}$ can be expressed as a $(c+1)$-ple integral of
Mellin-Barnes type involving $\zeta(-z_2-\cdots-z_{c+1}-z)$ and the 
zeta-function of $\Delta_+^{(a,b,c)}$.

The case (i) is similar; we omit the details, only noting that in this case the 
variable to be separated is $m_{a+1}$.   The proof of Theorem \ref{Th-A} is now
complete.
\end{proof}

\section{Proof of fundamental formulas}\label{sec-proof1}

In this section we prove fundamental formulas stated in Section \ref{sec-3}.

\begin{lem}
For $B\subset \Delta_+$ and $\mathbf{V}\in\mathscr{V}$, we have
  \begin{equation}
    {\rm L.h.}[\mathbf{V}\cap B]
    =\{v\in V~|~\text{$\langle v,\mu^{\mathbf{V}}_\beta\rangle=0$ for all $\beta\in\mathbf{V}\setminus B$}\}.
  \end{equation}
\end{lem}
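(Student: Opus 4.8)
The plan is to prove the two descriptions of $\mathrm{L.h.}[\mathbf{V}\cap B]$ agree by a double inclusion, exploiting that $\mathbf{V}=\{\beta_1,\dots,\beta_r\}$ is a basis of $V$ with dual basis $\mathbf{V}^*=\{\mu^{\mathbf{V}}_{\beta}\}_{\beta\in\mathbf{V}}$ characterized by $\langle\beta_i^\vee,\mu^{\mathbf{V}}_{\beta_j}\rangle=\delta_{ij}$ (here I am using the pairing conventions of Section~\ref{sec-2}, so I should be careful to keep the coroots $\beta^\vee$ on the left and the dual weights $\mu^{\mathbf{V}}_\beta$ on the right exactly as in the statement). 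Write $B'=\mathbf{V}\cap B$ and $B''=\mathbf{V}\setminus B$, so that $\mathbf{V}=B'\sqcup B''$ and $\{\beta^\vee:\beta\in\mathbf{V}\}$ is a basis of $V$ (identifying $V$ with $V^*$ as the paper does).

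First I would show $\subseteq$. Any $v\in\mathrm{L.h.}[B']$ is a linear combination $v=\sum_{\beta\in B'}c_\beta\,\beta^\vee$ — wait, here I must be slightly careful: $\mathbf{V}\cap B$ is a set of roots, and "linear hull" means the span of those roots (not their coroots). So let me instead expand a general $v=\sum_{\beta\in\mathbf{V}}c_\beta\,\beta^\vee$ in the coroot basis and observe that the condition $\langle v,\mu^{\mathbf{V}}_{\beta}\rangle=0$ for all $\beta\in B''$ is, by duality, exactly $c_\beta=0$ for all $\beta\in B''$. Hence the right-hand side equals $\bigoplus_{\beta\in B'}\mathbb{R}\,\beta^\vee=\mathrm{L.h.}[\,(B')^\vee\,]$. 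This reduces the whole lemma to the observation that $\mathrm{L.h.}[B']=\mathrm{L.h.}[(B')^\vee]$ as subspaces of $V$: indeed, $B'$ consists of roots $\beta$, and each coroot $\beta^\vee$ is a positive scalar multiple of $\beta$ under the identification $V\cong V^*$ (namely $\beta^\vee=2\beta/\langle\beta,\beta\rangle$), so $\beta$ and $\beta^\vee$ span the same line; taking sums over $\beta\in B'$ gives equality of the spans.

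The two inclusions then follow at once: if $v\in\mathrm{L.h.}[\mathbf{V}\cap B]=\mathrm{L.h.}[(B')^\vee]$ then $v=\sum_{\beta\in B'}c_\beta\beta^\vee$ and $\langle v,\mu^{\mathbf{V}}_\gamma\rangle=c_\gamma=0$ for $\gamma\in B''=\mathbf{V}\setminus B$; conversely, if $\langle v,\mu^{\mathbf{V}}_\gamma\rangle=0$ for all $\gamma\in\mathbf{V}\setminus B$, then expanding $v=\sum_{\beta\in\mathbf{V}}c_\beta\beta^\vee$ in the (basis) coroots of $\mathbf{V}$ and pairing against $\mu^{\mathbf{V}}_\gamma$ forces $c_\gamma=0$ for $\gamma\in\mathbf{V}\setminus B$, so $v\in\mathrm{L.h.}[(B')^\vee]=\mathrm{L.h.}[\mathbf{V}\cap B]$.

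The only genuinely delicate point — the "main obstacle," though it is minor — is bookkeeping the identification $V\cong V^*$ and the distinction between roots and coroots: one must confirm that $\mathrm{L.h.}[\mathbf{V}\cap B]$ (a span of roots) really coincides with the span of the corresponding coroots so that the clean duality argument with $\mathbf{V}^*$ applies. Once that is settled, everything is formal linear algebra in a basis, and no convergence or root-system-specific input (beyond $\beta^\vee\parallel\beta$) is needed.
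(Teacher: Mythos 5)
Your proof is correct and follows essentially the same route as the paper: expand $v$ in the basis determined by $\mathbf{V}$ and use the dual basis $\{\mu^{\mathbf{V}}_\beta\}$ to conclude that exactly the coefficients indexed by $\mathbf{V}\setminus B$ vanish. The paper expands directly in the roots $\beta\in\mathbf{V}$ (the pairing with $\mu^{\mathbf{V}}_\beta$ then picks out $c_\beta$ up to the nonzero factor $\langle\beta,\beta\rangle/2$), while you expand in the coroots $\beta^\vee$ and note $\beta^\vee\parallel\beta$, so the difference is purely cosmetic.
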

\begin{proof}
Let $v$ be an element of the right-hand side.
We write $v=\sum_{\beta\in\mathbf{V}}c_\beta \beta$
and have $c_\beta=0$ for all $\beta\in\mathbf{V}\setminus B$
and hence
\begin{equation}
  v=\sum_{\beta\in\mathbf{V}\cap B}c_\beta \beta\in{\rm L.h.}[\mathbf{V}\cap B].
\end{equation}
The converse is shown similarly.
\end{proof}

\begin{proof}[Proof of Theorem \ref{thm:main0}]
For $\mathbf{t}=(t_{\alpha})_{\alpha\in\Delta_+}\in\mathbf{T}$, $\mathbf{y}\in V$, 
$\mathbf{V}\in\mathscr{V}$, $B\subset \Delta_+$ and $q\in Q^\vee/L(\mathbf{V}^\vee)$, let
\begin{equation}
  \begin{split}
      F(\mathbf{t},\mathbf{y};\mathbf{V},B,q)&=
  (-1)^{\abs{B\setminus\mathbf{V}}}
  \Bigl(
  \prod_{\gamma\in\Delta_+\setminus(\mathbf{V}\cup B)}
  \frac{t_\gamma}
  {t_\gamma-\sum_{\beta\in\mathbf{V}\setminus B}t_\beta\langle\gamma^\vee,\mu^{\mathbf{V}}_\beta\rangle}
  \Bigr)
\\
&\qquad\times
  \Bigl(
  \prod_{\beta\in\mathbf{V}\setminus B}\frac{t_\beta\exp
    (t_\beta\{\mathbf{y}+q\}_{\mathbf{V},\beta})}{e^{t_\beta}-1}
  \Bigr),
\end{split}
\end{equation}
so that
\begin{equation}\label{termwise}
  F(\mathbf{t},\mathbf{y};\Delta)=
      \sum_{\mathbf{V}\in\mathscr{V}}
      \frac{1}{\abs{Q^\vee/L(\mathbf{V}^\vee)}}
      \sum_{q\in Q^\vee/L(\mathbf{V}^\vee)}
      F(\mathbf{t},\mathbf{y};\mathbf{V},\emptyset,q).
\end{equation}

Assume
$\mathbf{y}\in V\setminus\mathfrak{H}_{\mathscr{R}}$,
and let
\begin{equation}
  F_j=F(\mathbf{t},\mathbf{y};\mathbf{V},A_j,q).
\end{equation}
We calculate $\mathfrak{D}_{\nu_{j+1}} F_j$.
First, since $\mathbf{y}\notin\mathfrak{H}_{\mathscr{R}}$, noting Remark \ref{tsuika}
we find that
\begin{equation}
  \partial_{\nu_{j+1}^\vee} F_j
  =
  \Bigl(\sum_{\beta\in\mathbf{V}\setminus A_j}t_\beta\langle\nu_{j+1}^\vee,\mu^{\mathbf{V}}_\beta\rangle\Bigr)
  F_j.
\end{equation}
Consider the case $\nu_{j+1}\in\mathbf{V}$.
Then $\langle\nu_{j+1}^\vee,\mu^{\mathbf{V}}_\beta\rangle=\delta_{\nu_{j+1},\beta}$ and
\begin{equation}
  \sum_{\beta\in\mathbf{V}\setminus A_j}t_\beta\langle\nu_{j+1}^\vee,
\mu^{\mathbf{V}}_\beta\rangle=t_{j+1},
\end{equation}
where we write $t_{\nu_{j+1}}=t_{j+1}$ for brevity.
Hence we have $\partial_{\nu_{j+1}^\vee} F_j =t_{j+1} F_j$.
Therefore we obtain
\begin{equation}
  \begin{split}
    \mathfrak{D}_{\nu_{j+1}} F_j
    &=
    (-1)^{\abs{A_j\setminus\mathbf{V}}}
    \Bigl(
    \prod_{\gamma\in\Delta_+\setminus(\mathbf{V}\cup A_j)}
    \frac{t_\gamma}
    {t_\gamma-\sum_{\beta\in\mathbf{V}\setminus(A_j\cup\{\nu_{j+1}\})}t_\beta\langle\gamma^\vee,\mu^{\mathbf{V}}_\beta\rangle}
    \Bigr)
    \\
    &
    \qquad \times
    \Bigl(
    \prod_{\beta\in\mathbf{V}\setminus(A_j\cup\{\nu_{j+1}\})}\frac{t_\beta\exp
      (t_\beta\{\mathbf{y}+q\}_{\mathbf{V},\beta})}{e^{t_\beta}-1}
    \Bigr)
  \end{split}
\end{equation}
which is equal to $F_{j+1}$ because 
$\Delta_+\setminus(\mathbf{V}\cup (A_j\cup\{\nu_{j+1}\}))=\Delta_+
\setminus(\mathbf{V}\cup A_j)$ and
$\abs{(A_j\cup\{\nu_{j+1}\})\setminus\mathbf{V}}=\abs{A_j\setminus\mathbf{V}}$.

Next consider the case $\nu_{j+1}\notin\mathbf{V}$. 
If $\langle\nu_{j+1}^\vee,\mu^{\mathbf{V}}_\beta\rangle=0$ for all $\beta\in\mathbf{V}\setminus A_j$, then
\begin{equation}
    \partial_{\nu_{j+1}^\vee} F_j
  =
  \Bigl(\sum_{\beta\in\mathbf{V}\setminus A_j}t_\beta\langle\nu_{j+1}^\vee,\mu^{\mathbf{V}}_\beta\rangle\Bigr)
  F_j
=0
\end{equation}
and hence $\mathfrak{D}_{\nu_{j+1}} F_j=0$.
Otherwise, since
\begin{equation}
  \frac{\partial}{\partial t_{j+1}}\biggr\rvert_{t_{j+1}=0}
  \Bigl(\frac{t_{j+1}}
  {t_{j+1}-\sum_{\beta\in\mathbf{V}\setminus A_j}t_\beta\langle\nu_{j+1}^\vee,\mu^{\mathbf{V}}_\beta\rangle}
  \Bigr)
  =
  -\frac{1}{\sum_{\beta\in\mathbf{V}\setminus A_j}t_\beta\langle\nu_{j+1}^\vee,\mu^{\mathbf{V}}_\beta\rangle}
\end{equation}
we have
\begin{equation}
  \begin{split}
    \mathfrak{D}_\nu F_j&=
    (-1)^{\abs{A_j\setminus\mathbf{V}}+1}
    \Bigl(
    \prod_{\gamma\in\Delta_+\setminus(\mathbf{V}\cup A_j\cup\{\nu_{j+1}\})}
    \frac{t_\gamma}
    {t_\gamma-\sum_{\beta\in\mathbf{V}\setminus A_j}t_\beta\langle\gamma^\vee,\mu^{\mathbf{V}}_\beta\rangle}
    \Bigr)
    \\
    &\qquad\times
    \Bigl(
    \prod_{\beta\in\mathbf{V}\setminus A_j}\frac{t_\beta\exp
      (t_\beta\{\mathbf{y}+q\}_{\mathbf{V},\beta})}{e^{t_\beta}-1}
    \Bigr).
  \end{split}
\end{equation}
By noting
$\mathbf{V}\setminus (A_j\cup\{\nu_{j+1}\})= 
\mathbf{V}\setminus A_j$ and
$\abs{(A_j\cup \{\nu_{j+1}\})\setminus \mathbf{V}}=   
\abs{A_j\setminus \mathbf{V}}+1$
we find that the right-hand side is equal to $F_{j+1}$.

We see that
the condition
$\langle\nu_{j+1},\mu^{\mathbf{V}}_\beta\rangle=0$ for all $\beta\in\mathbf{V}\setminus A_j$
 is equivalent to 
the condition
$\nu_{j+1}\in{\rm L.h.}[\mathbf{V}\cap A_j]$.
Therefore the above results can be summarized as
\begin{equation}
  \mathfrak{D}_{\nu_{j+1}} F_j
  =
  \begin{cases}
    0\qquad &(\nu_{j+1}\in{\rm L.h.}[\mathbf{V}\cap A_j]),\\
    F_{j+1}\qquad &(\nu_{j+1}\notin{\rm L.h.}[\mathbf{V}\cap A_j]).
  \end{cases}
\end{equation}
Hence
\begin{equation}
\label{eq:DAF0} 
  \mathfrak{D}_A F_0
  =
  \begin{cases}
    0\qquad &(\mathbf{V}\notin\mathscr{V}_A),\\
    F_N\qquad &(\mathbf{V}\in\mathscr{V}_A).
  \end{cases}
\end{equation}
Similarly to the above calculations, we see that $\mathfrak{D}_{A,2} F_0$ gives the 
same result as \eqref{eq:DAF0}.
Thus, since $F_0=F(\mathbf{t},\mathbf{y};\mathbf{V},\emptyset,q)$, from
\eqref{termwise} we obtain \eqref{eq:main0}.

The continuity
follows from the limit
\begin{equation}
   \lim_{c\to0+}\{\mathbf{y}+q+c\phi\}_{\mathbf{V},\beta}
=\{\mathbf{y}+q\}_{\mathbf{V},\beta}
\end{equation}
(see the last part of the proof of \cite[Theorem 4.1]{KM5}.)
Finally, since $F(\mathbf{t},\mathbf{y};\Delta)$ is holomorphic with respect to $\mathbf{t}$ around the origin,
so is $\bigl(\mathfrak{D}_A F\bigr) (\mathbf{t}_{\Delta^*},\mathbf{y};\Delta)$
with respect to $\mathbf{t}_{\Delta^*}$.
The proof of Theorem \ref{thm:main0} is thus complete.
\end{proof}


\begin{proof}[Proof of Theorem \ref{thm:main1}]
  First assume  $\mathbf{y}\in V\setminus\mathfrak{H}_{\mathscr{R}}$.
  Let
  $\mathbf{k}'=(k'_\alpha)_{\alpha\in\Delta_+}$ with
  $k'_\alpha=k_\alpha$ ($\alpha\in \Delta^*$),
  $k'_\alpha=2$ ($\alpha\in \Delta_+\setminus\Delta^*=A$).
  Then by Proposition \ref{prop:ZP}, we have
  \begin{equation}
    \begin{split}
      S(\mathbf{k}',\mathbf{y};\Delta)
      &=
      \sum_{\lambda\in P\setminus H_{\Delta^\vee}}
      e^{2\pi i\langle \mathbf{y},\lambda\rangle}
      \prod_{\alpha\in\Delta_+}
      \frac{1}{\langle\alpha^\vee,\lambda\rangle^{k'_\alpha}}
      \\
      &=
      \sum_{w\in W}
      \Bigl(\prod_{\alpha\in\Delta_+\cap w\Delta_-}
      (-1)^{k'_{\alpha}}\Bigr)
      \zeta_r(w^{-1}\mathbf{k}',w^{-1}\mathbf{y};\Delta)
      \\
      &=(-1)^{\abs{\Delta_+}}  
      \mathcal{P}(\mathbf{k}',\mathbf{y};\Delta)
      \biggl(\prod_{\alpha\in\Delta_+}
      \frac{(2\pi i)^{k'_\alpha}}{k'_\alpha!}\biggr).
    \end{split}
  \end{equation}
Applying $\prod_{\alpha\in A}\partial_{\alpha^\vee}^2$ to the above.
From the first line we observe that each $\partial_{\alpha^\vee}^2$ produces
the factor $(2\pi i\langle\alpha^{\vee},\lambda\rangle)^2$.
Hence the factor
$\zeta_r(w^{-1}\mathbf{k}',w^{-1}\mathbf{y};\Delta)$ on the second line
is transformed into
$(2\pi i)^{2|A|}\zeta_r(w^{-1}\mathbf{k},w^{-1}\mathbf{y};\Delta)$.
Therefore we have
  \begin{multline}
    \label{eq:key1} 
    (2\pi i)^{2|A|}
    \sum_{w\in W}
    \Bigl(\prod_{\alpha\in\Delta_+\cap w\Delta_-}
    (-1)^{k_{\alpha}}\Bigr)
    \zeta_r(w^{-1}\mathbf{k},w^{-1}\mathbf{y};\Delta)
    \\
    =(-1)^{\abs{\Delta_+}} 
    \Bigl(\prod_{\alpha\in A}\partial_{\alpha^\vee}^2 \Bigr)
    \mathcal{P}(\mathbf{k}',\mathbf{y};\Delta)
    \biggl(\prod_{\alpha\in\Delta_+}
    \frac{(2\pi i)^{k'_\alpha}}{k'_\alpha!}\biggr).
  \end{multline}
Since
\begin{align*}
\biggl(\prod_{\alpha\in\Delta_+}                                                      
    \frac{(2\pi i)^{k'_\alpha}}{k'_\alpha!}\biggr)=
\biggl(\prod_{\alpha\in\Delta^*}                                                      
    \frac{(2\pi i)^{k'_\alpha}}{k'_\alpha!}\biggr)                                     
    \biggl(\prod_{\alpha\in A}      
    \frac{(2\pi i)^{2}}{2!}\biggr),
\end{align*}
we have
  \begin{equation}                                                                  \label{eq:key1bis} 
       \begin{split}
    &\sum_{w\in W}                                                                       
    \Bigl(\prod_{\alpha\in\Delta_+\cap w\Delta_-}                                       
    (-1)^{k_{\alpha}}\Bigr)                                                             
    \zeta_r(w^{-1}\mathbf{k},w^{-1}\mathbf{y};\Delta)                                   
    \\                                                                                  
    &=(-1)^{\abs{\Delta_+}}                                                              
    \Bigl(\prod_{\alpha\in A}\frac{1}{2}\partial_{\alpha^\vee}^2 \Bigr)
    \mathcal{P}(\mathbf{k}',\mathbf{y};\Delta)                                          
    \biggl(\prod_{\alpha\in\Delta^*}                                                    
    \frac{(2\pi i)^{k'_\alpha}}{k'_\alpha!}\biggr).                                      
    \end{split}
  \end{equation}
From \eqref{def-F} it follows that
  \begin{equation}
\label{eq:key2}
      \Bigl(\prod_{\alpha\in A}\frac{1}{2}\frac{\partial^2 }{\partial t_\alpha^2 }\biggr\rvert_{t_\alpha=0}\partial_{\alpha^\vee}^2 \Bigr)
      F(\mathbf{t},\mathbf{y};\Delta)
      \\
      =
      \sum_{\substack{\mathbf{m}=(m_\alpha)_{\alpha\in\Delta_+}\\m_\alpha\in
\mathbb{N}_{0}(\alpha\in\Delta^*)\\m_\alpha=2(\alpha\in A)}}
      \Bigl(\prod_{\alpha\in A}\frac{1}{2}\partial_{\alpha^\vee}^2 \Bigr)
      \mathcal{P}(\mathbf{m},\mathbf{y};\Delta)
      \prod_{\alpha\in\Delta^*} \frac{t_\alpha^{m_\alpha}}{m_\alpha!}.
    \end{equation}
    By Theorem \ref{thm:main0}, we see that
the left-hand side of \eqref{eq:key2} is equal to
  \begin{equation}
    \label{eq:key3}
    F_{\Delta^*}(\mathbf{t}_{\Delta^*},\mathbf{y};\Delta)
=
    \sum_{\mathbf{m}_{\Delta^*}\in \mathbb{N}_{0}^{\abs{\Delta^*}}}\mathcal{P}_{\Delta^*}(\mathbf{m}_{\Delta^*},\mathbf{y};\Delta)
    \prod_{\alpha\in\Delta^*}
    \frac{t_{\alpha}^{m_\alpha}}{m_\alpha!}.
  \end{equation}
Comparing \eqref{eq:key2} with \eqref{eq:key3} we find that
\begin{align*}
\Bigl(\prod_{\alpha\in A}\frac{1}{2}\partial_{\alpha^\vee}^2 \Bigr)                   
    \mathcal{P}(\mathbf{k}',\mathbf{y};\Delta)
=\mathcal{P}_{\Delta^*}(\mathbf{k}_{\Delta^*},\mathbf{y};\Delta).
\end{align*}
Therefore \eqref{eq:key1bis} implies the desired result when
$\mathbf{y}\in V\setminus\mathfrak{H}_{\mathscr{R}}$.
By the continuity with respect to $\mathbf{y}$,
the result is also valid in the case when
$\mathbf{y}\in\mathfrak{H}_{\mathscr{R}}$.
\end{proof}

\begin{remark}
  It is possible to prove Theorem \ref{thm:main1}
  by use of $\mathfrak{D}_A$ instead of $\mathfrak{D}_{A,2}$.
  In this method, we need to consider the case $k_\alpha=1$ for some $\alpha\in A$ 
  and such an argument is indeed valid.  (See \cite[Remark 3.2]{KM3}.)
\end{remark}

\section{Proofs of Theorems \ref{T-5-1} and \ref{T-B2-EZ}}\label{sec-proof2}

In this final section we prove Theorems \ref{T-5-1} and \ref{T-B2-EZ}.
The basic principle of the proofs of these theorems is similar to that of the
argument developed in \cite[Section 7]{KMT-CJ}.   
We first state the following lemma. 

\begin{lem} \label{L-5-2} \ For an 
arbitrary function $f\,:\, \mathbb{N}_{0} \to \mathbb{C}$ and $d\in \mathbb{N}$, we have
\begin{align}                                                                          
&\sum_{k=0}^{d}\phi(d-k)\ee_{d-k}\sum_{\nu=0}^{k}f(k-\nu)\frac{(i\pi)^{\nu}}{\nu!} 
=-\frac{i\pi}{2}f(d-1)+\sum_{\xi=0}^{[d/2]} \zeta(2\xi)f(d-2\xi), \label{MNOT}              
\end{align}
where we denote the integer part of $x\in \mathbb{R}$ by $[x]$, 
$\ee_j=(1+(-1)^j)/2$ $(j\in \mathbb{Z})$
and $\phi(s)=\sum_{m\geq 1}(-1)^m m^{-s}=\left(2^{1-s}-1\right)\zeta(s)$.
\end{lem}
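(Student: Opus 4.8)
The plan is to reduce the identity to a single coefficient computation and then evaluate those coefficients by a short generating-function argument.

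\textbf{Step 1: Reindexing.} First I would rewrite the left-hand side of \eqref{MNOT} by interchanging the order of summation. Putting $m=k-\nu$ in the inner sum gives $\sum_{m=0}^{k}f(m)(i\pi)^{k-m}/(k-m)!$; swapping the $k$- and $m$-sums and finally setting $J=d-m$, the left-hand side becomes
\[
\sum_{J=0}^{d} f(d-J)\,c_J,\qquad c_J:=\sum_{l=0}^{J}\phi(J-l)\,\ee_{J-l}\,\frac{(i\pi)^{l}}{l!}.
\]
Hence it suffices to prove that $c_J=\zeta(J)$ for even $J$ (with the usual convention $\zeta(0)=-1/2$), that $c_1=-i\pi/2$, and that $c_J=0$ for odd $J\geq3$. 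Substituting these values and writing $J=2\xi$ for the even contributions then reproduces exactly the right-hand side; the term $-\tfrac{i\pi}{2}f(d-1)$ is the $J=1$ contribution and is of course understood to be present only for $d\geq1$.

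\textbf{Step 2: Evaluating $c_J$ via generating functions.} The factor $\ee_{J-l}$ kills every term for which $J-l$ is odd, so $c_J$ is the coefficient of $t^J$ in a Cauchy product of $A(t):=\sum_{n\geq0}\phi(2n)t^{2n}$ with a cosine or sine series. Since $\phi(2n)=(2^{1-2n}-1)\zeta(2n)$ holds for all $n\geq 0$ (the case $n=0$ reading $\phi(0)=-1/2$), and since the classical cotangent expansion gives $\sum_{n\geq0}\zeta(2n)x^{2n}=-\tfrac{\pi x}{2}\cot\pi x$, we obtain
\[
A(t)=2\sum_{n\geq0}\zeta(2n)(t/2)^{2n}-\sum_{n\geq0}\zeta(2n)t^{2n}
=\frac{\pi t}{2}\Bigl(\cot\pi t-\cot\frac{\pi t}{2}\Bigr)=-\frac{\pi t}{2\sin\pi t},
\]
the last equality being the half-angle identity $\cot\tfrac{\theta}{2}-\cot\theta=\tfrac1{\sin\theta}$ with $\theta=\pi t$. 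Noting that $\sum_{p\geq0}(i\pi t)^{2p}/(2p)!=\cos\pi t$ and $\sum_{p\geq0}(i\pi t)^{2p+1}/(2p+1)!=i\sin\pi t$, we get
\[
\sum_{n\geq0}c_{2n}t^{2n}=A(t)\cos\pi t=-\frac{\pi t}{2}\cot\pi t=\sum_{n\geq0}\zeta(2n)t^{2n},\qquad
\sum_{n\geq0}c_{2n+1}t^{2n+1}=A(t)\cdot i\sin\pi t=-\frac{i\pi t}{2},
\]
which yields $c_{2n}=\zeta(2n)$, $c_1=-i\pi/2$, and $c_{2n+1}=0$ for $n\geq1$, completing the proof.

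\textbf{Main obstacle.} There is no deep difficulty here; the argument is essentially bookkeeping. The two points that need care are the reindexing of the iterated sum in Step 1, and, on the analytic side, the collapse of $A(t)$ to $-\pi t/(2\sin\pi t)$ via the half-angle identity, done in tandem with the correct handling of the boundary value $\zeta(0)=\phi(0)=-1/2$ — which is precisely what generates the $\xi=0$ term on the right-hand side of \eqref{MNOT}. (Alternatively, one can avoid generating functions entirely and verify the identity $c_J$ directly from the partial-fraction expansion of the cotangent, as in \cite[Section 7]{KMT-CJ}, but the above packaging is shorter.)
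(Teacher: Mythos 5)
Your proof is correct: the reindexing to $\sum_{J=0}^{d}f(d-J)c_J$ is right, and the coefficient evaluation checks out ($c_0=\phi(0)=-\tfrac12=\zeta(0)$, $c_1=-\tfrac{i\pi}{2}$, $c_{2n}=\zeta(2n)$, $c_{2n+1}=0$ for $n\geq1$); note also that the potentially problematic values $\phi(\text{odd})$, in particular $\phi(1)$, never enter because $\ee_{J-l}$ kills them, so the generating function $A(t)=\sum_{n\geq0}\phi(2n)t^{2n}$ is all you need, and your collapse of $A(t)$ to $-\pi t/(2\sin\pi t)$ via $\sum_{n\geq0}\zeta(2n)x^{2n}=-\tfrac{\pi x}{2}\cot\pi x$ and the half-angle identity is valid for $|t|<1$, which suffices to compare Taylor coefficients. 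However, this is a genuinely different route from the paper: there the lemma is not proved from scratch at all, but obtained in one line by combining equations (2.6) and (2.7) of Lemma 2.1 in the cited paper of Matsumoto, Nakamura, Ochiai and Tsumura, with the choice $g(x)=i\pi f(x-1)$; those formulas encode essentially the same coefficient identities you derive. What the paper's approach buys is brevity and consistency with the machinery already set up in that reference; what yours buys is a short self-contained verification that makes transparent exactly where the boundary term $\zeta(0)=-\tfrac12$ (the $\xi=0$ term) and the anomalous $-\tfrac{i\pi}{2}f(d-1)$ term come from, namely the even and odd parts $A(t)\cos\pi t=-\tfrac{\pi t}{2}\cot\pi t$ and $A(t)\,i\sin\pi t=-\tfrac{i\pi t}{2}$.
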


\begin{proof}
This can be immediately obtained by combining (2.6) and (2.7) (with the choice
$g(x)=i\pi f(x-1)$$\;$)
in \cite[Lemma 2.1]{MNOT}.
\end{proof}

\begin{proof}[Proof of Theorem \ref{T-5-1}]
From \cite[(4.31) and (4.32)]{KMT-CJ}, we have
\begin{align}                                                                           
& \sum_{n\in \mathbb{Z}^*} \frac{(-1)^{n}e^{in\theta}}{n^a}-2\sum_{j=0}^{a}\ 
\phi(a-j)\ee_{a-j} \frac{(i\theta)^{j}}{j!}=0  \label{e-5-1}                          
\end{align}
for $a\geq 2$ and $\theta \in [-\pi,\pi]$, where 
$\mathbb{Z}^*=\mathbb{Z}\smallsetminus \{0\}$.   For $x,y \in \mathbb{R}$ with 
$|x|<1$
 and $|y|<1$, multiply the above by
\begin{equation}                                                                       
\sum_{l,m\in \mathbb{N}} (-1)^{l+m}x^l y^m e^{i(l+m)\theta}. \label{5-1-0}              
\end{equation}
Separating the terms corresponding to $l+m+n=0$, we obtain
\begin{align*}                                                                         
& \sum_{l,m\in \mathbb{N}}\sum_{n\in \mathbb{Z}^*\atop l+m+n\not=0} 
\frac{(-1)^{l+m+n}x^l y^m e^{i(l+m+n)\theta}}{n^a}\\                                  
& \ -2\sum_{j=0}^{a}\ \phi(a-j)\ee_{a-j}\sum_{l,m\in \mathbb{N}}(-1)^{l+m}x^l y^m  
e^{i(l+m)\theta} \frac{(i\theta)^{j}}{j!}\\                                           
& \ =-(-1)^a\sum_{l,m\in \mathbb{N}} \frac{x^l y^m}{(l+m)^a}                            
\end{align*}
for $\theta \in [-\pi,\pi]$.
The right-hand side of the above is constant with respect to $\theta$.
Therefore we can apply \cite[Lemma 6.2]{KMT-CJ} with 
$h=1$, $a_1=a$, $d=c\geq 2$, 
$$C(N)=\sum_{l,m\in\mathbb{N},n\in\mathbb{Z}^*\atop l+m+n=N}\frac{x^l y^m}{n^a},$$
\begin{align*}
D(N;r;1)=
   \begin{cases}
   \sum_{l,m\in\mathbb{N}\atop l+m=N}x^l y^m & (N\geq 2,r=0),\\
   0 & ({\rm otherwise})
   \end{cases}
\end{align*}
in the notation of \cite{KMT-CJ}.    The result is
\begin{align*}                                                                          
& \sum_{l,m\in \mathbb{N}}\sum_{n\in \mathbb{Z}^*\atop {l+m+n\not=0}} 
\frac{(-1)^{l+m+n}x^l y^m e^{i(l+m+n)\theta}}{n^a(l+m+n)^c} \\   
& \ -2\sum_{j=0}^{a}\ \phi(a-j)\ee_{a-j}\sum_{\xi=0}^{j} \binom{j-\xi+c-1}{j-\xi}
(-1)^{j-\xi}\sum_{l,m\in \mathbb{N}}\frac{(-1)^{l+m}x^l y^m e^{i(l+m)\theta}}
{(l+m)^{c+j-\xi}} \frac{(i\theta)^{\xi}}{\xi!}\\                                         
& \ +2\sum_{j=0}^{c}\ \phi(c-j)\ee_{c-j}\sum_{\xi=0}^{j} \binom{j-\xi+a-1}{a-1}
(-1)^{a-1}\sum_{l,m\in \mathbb{N}}\frac{x^l y^m }{(l+m)^{a+j-\xi}} \frac{(i\theta)^{\xi}}
{\xi!}=0.      
\end{align*}
Replace $x$ by $-xe^{-i\theta}$ and separate the term corresponding to $m+n=0$ in the 
first member on the left-hand side, and apply \cite[Lemma 6.2]{KMT-CJ} again with 
$d=b\geq 2$.   Then we can obtain
\begin{align}     
& \sum_{l,m\in \mathbb{N}}\sum_{n\in \mathbb{Z}^*\atop {m+n\not=0 \atop l+m+n\not=0}} 
\frac{(-1)^{m+n}x^l y^m e^{i(m+n)\theta}}{n^a(m+n)^b (l+m+n)^c} \label{triple-1}  \\                
& \ =2\sum_{j=0}^{a}\ \phi(a-j)\ee_{a-j}\sum_{\xi=0}^{j} \sum_{\w=0}^{j-\xi}
\binom{\w+b-1}{\w}(-1)^\w \binom{j-\xi-\w+c-1}{c-1}(-1)^{j-\xi-\w} \notag\\             
& \qquad \qquad \times \sum_{l,m\in \mathbb{N}}\frac{(-1)^{m}x^l y^m e^{im\theta}}
{m^{b+\w}(l+m)^{c+j-\xi-\w}} \frac{(i\theta)^{\xi}}{\xi!} \notag\\   
& \ -2\sum_{j=0}^{b}\ \phi(b-j)\ee_{b-j}\sum_{\xi=0}^{j} \sum_{\w=0}^{a-1}
\binom{\w+j-\xi}{\w}(-1)^\w \binom{a-1-\w+c-1}{c-1}(-1)^{a-1-\w} \notag\\   
& \qquad \qquad \times \sum_{l,m\in \mathbb{N}}\frac{x^l y^m}{m^{j-\xi+\w+1}
(l+m)^{a+c-1-\w}} \frac{(i\theta)^{\xi}}{\xi!} \notag\\      
& \ -2\sum_{j=0}^{c}\ \phi(c-j)\ee_{c-j}\sum_{\xi=0}^{j} \sum_{\w=0}^{j-\xi}
\binom{\w+b-1}{\w}(-1)^\w \binom{j-\xi-\w+a-1}{a-1}(-1)^{a-1} \notag\\               
& \qquad \qquad \times \sum_{l,m\in \mathbb{N}}\frac{(-1)^{l}x^l y^m e^{-il\theta}}
{(-l)^{b+\w}(l+m)^{a+j-\xi-\w}} \frac{(i\theta)^{\xi}}{\xi!} \notag\\  
& \ +2\sum_{j=0}^{b}\ \phi(b-j)\ee_{b-j}\sum_{\xi=0}^{j} \sum_{\w=0}^{c-1}
\binom{\w+j-\xi}{\w}(-1)^\w \binom{a-1-\w+c-1}{a-1}(-1)^{a-1} \notag\\                
& \qquad \qquad \times \sum_{l,m\in \mathbb{N}}\frac{x^l y^m}{(-l)^{j-\xi+\w+1}
(l+m)^{a+c-1-\w}} \frac{(i\theta)^{\xi}}{\xi!}.  \notag
\end{align}
Since $a,b,c \geq 2$, we can let $x,y \to 1$ on the both sides because of  
absolute convergence.  Then set $\theta=\pi$, and consider 
the left-hand side of the resulting formula first.
The contribution of the terms corresponding to $m+2n=0$ is obviously
$(-1)^a\zeta_2(a+b,c)$.
The contribution of the terms corresponding to $l+m+2n=0$ is (with rewriting
$-n$ by $n$) 
\begin{align*}
(-1)^a\sum_{m,n\in\mathbb{N}\atop m\neq n, m<2n}\frac{1}{n^{a+c}(m-n)^b},
\end{align*}
which is, by separating into two parts according to $n<m<2n$ and $0<m<n$, 
equal to $(-1)^a(1+(-1)^b)\zeta_2(b,a+c)$.   We can also see that the 
contribution of the terms corresponding to $l+2m+2n=0$ is
\begin{align*}
(-1)^a \sum_{m,n\in\mathbb{N}\atop n>m}\frac{1}{n^a(m-n)^b(n-m)^c}
    =(-1)^{a+b}\zeta_2(b+c,a).
\end{align*}
The remaining part of the left-hand side is
\begin{align*}                                                                          
& \sum_{l,m\in \mathbb{N}}\sum_{n\in \mathbb{Z}^*\atop {m+n\not=0 \atop {m+2n\not=0 
\atop {l+m+n\not=0 \atop {l+m+2n\not=0 \atop l+2m+2n\not=0}}}}} \frac{1}
{n^a(m+n)^b (l+m+n)^c} \notag\\                                  
& =\zeta_3(a,b,c)+(-1)^a\sum_{l,m\in \mathbb{N}}
\sum_{n\in \mathbb{N}\atop {m\not=n \atop 
{m\not=2n \atop {l+m\not=n \atop {l+m\not=2n \atop l+2m\not=2n}}}}} \frac{1}
{n^a(m-n)^b (l+m-n)^c}.                                          
\end{align*}
On the above double sum,
replace $j=m-n$ and $k=n-m$ correspondingly to $m>n$ and $m<n$, respectively.
On the part corresponding to $m>n$, we further divide the sum into three parts
according to $l+j<n$, $j<n<l+j$, $n<j$ and find that the contribution of this part is
$$
(-1)^a\left\{\zeta_3(b,c,a)+\zeta_3(b,a,c)+\zeta_3(a,b,c)\right\}.
$$
Similarly we treat the part $m<n$.
Collecting the above results, we obtain that the left-hand side is
\begin{align*}  
  (-1)^a&\bigg\{(1+(-1)^a)\zeta_3(a,b,c)+(1+(-1)^b)\left( \zeta_3(b,a,c)+
\zeta_3(b,c,a)\right)\\  
& \qquad +(-1)^b(1+(-1)^c)\zeta_3(c,b,a)+\zeta_2(a+b,c)\\                               
& \qquad +(1+(-1)^b)\zeta_2(b,a+c)+(-1)^b\zeta_2(b+c,a)\bigg\}.                         
\end{align*}
On the other hand, applying Lemma \ref{L-5-2}, we can rewrite the right-hand side 
to
\begin{align*}                                                                          
& 2(-1)^a\bigg\{ \sum_{\xi=0}^{[a/2]}\zeta(2\xi)\sum_{\w=0}^{a-2\xi}\binom{\w+b-1}
{\w}\binom{a+c-2\xi-\w-1}{c-1}\zeta_2(b+\w,a+c-2\xi-\w)\\                             
& \ +\sum_{\xi=0}^{[b/2]}\zeta(2\xi)\sum_{\w=0}^{a-1}\binom{\w+b-2\xi}{\w}
\binom{a+c-\w-2}{c-1}\zeta_2(b-2\xi+\w+1,a+c-1-\w)\\                                  
& \ +(-1)^b\sum_{\xi=0}^{[c/2]}\zeta(2\xi)\sum_{\w=0}^{c-2\xi}\binom{\w+b-1}{\w}
\binom{a+c-2\xi-\w-1}{a-1}\zeta_2(b+\w,a+c-2\xi-\w)\\                                 
& \ +(-1)^b\sum_{\xi=0}^{[b/2]}\zeta(2\xi)\sum_{\w=0}^{c-1}\binom{\w+b-2\xi}{\w}
\binom{a+c-\w-2}{a-1}\zeta_2(b-2\xi+\w+1,a+c-1-\w)\bigg\}.                           
\end{align*}
This completes the proof of Theorem \ref{T-5-1}.
\end{proof}

Finally we give the proof of Theorem \ref{T-B2-EZ}. 

\begin{proof}[Proof of Theorem \ref{T-B2-EZ}] 
Let $p\in \mathbb{N}_{\geq 2}$ and $s\in \mathbb{R}_{>1}$. It follows from 
\cite[Equation (4.7)]{KMT-Pala} that 
\begin{equation*}
\begin{split}
& \sum_{l\in \mathbb{Z}^*, m\in\mathbb{N}\atop l+m\not=0} \frac{(-1)^{l+m}x^m e^{i(l+m)\theta}}{l^{p}m^{s}}-2\sum_{j=0}^{p}\ \phi(p-j)\varepsilon_{p-j}\left\{ \sum_{m=1}^\infty \frac{(-1)^{m}x^m e^{im\theta}}{m^s}\right\} \frac{(i\theta)^{j}}{j!}\\
& \ \ \ \ +(-1)^{p}\sum_{m=1}^\infty \frac{x^m}{m^{s+p}}=0 
\end{split}
\end{equation*}
for $\theta \in [-\pi,\pi]$ and $x\in \mathbb{C}$ with $|x|\leq 1$. Setting $x=-e^{i\theta}$ on the both sides and separating the term corresponding to $l+2m=0$ of the first term on the left-hand side, we have
\begin{align*}
& \sum_{l\in \mathbb{Z}^*,m\in\mathbb{N}\atop {l+m\not=0 \atop l+2m\not=0}} \frac{(-1)^{l} e^{i(l+2m)\theta}}{l^{p}m^{s}} -2\sum_{j=0}^{p}\ \phi(p-j)\varepsilon_{p-j}\left\{ \sum_{m=1}^\infty \frac{ e^{2im\theta}}{m^s}\right\} \frac{(i\theta)^{j}}{j!}\\
& \ \ \ \ +(-1)^{p}\sum_{m=1}^\infty \frac{(-1)^me^{im\theta}}{m^{s+p}}=-\sum_{m=1}^\infty \frac{1}{(-2m)^p m^s}. 
\end{align*}
By \cite[Lemma 6.2]{KMT-CJ} with $d=q\geq 2$, we obtain
\begin{align}
& \sum_{l\in \mathbb{Z}^*,m\in\mathbb{N}\atop{l+m\not=0 \atop l+2m\not=0}} \frac{(-1)^{l} e^{i(l+2m)\theta}}{l^{p}m^{s}(l+2m)^q} \label{eq-9-2}\\
& \quad =2\sum_{j=0}^{p}\ \phi(p-j)\varepsilon_{p-j}\sum_{\xi=0}^{j}\binom{j-\xi+q-1}{j-\xi}\frac{(-1)^{j-\xi}}{2^{q+j-\xi}}\sum_{m=1}^{\infty}\frac{e^{2im\theta}}{m^{s+q+j-\xi}}\frac{(i\theta)^{\xi}}{\xi!}\notag\\
& \quad -2\sum_{j=0}^{q}\ \phi(q-j)\varepsilon_{q-j}\sum_{\xi=0}^{j}\binom{j-\xi+p-1}{j-\xi}\frac{(-1)^{p-1}}{2^{p+j-\xi}}\sum_{m=1}^{\infty}\frac{1}{m^{s+p+j-\xi}}\frac{(i\theta)^{\xi}}{\xi!}\notag\\
& \ \ \ \ -(-1)^{p}\sum_{m=1}^{\infty}\frac{(-1)^m e^{im\theta}}{m^{s+p+q}}. \notag
\end{align}
Let $\theta=\pi$ and using Lemma \ref{L-5-2}. Then the right-hand side of \eqref{eq-9-2} is equal to
\begin{align}
& 2(-1)^{p}\sum_{\xi=0}^{[p/2]}\ \frac{1}{2^{p+q-2\xi}}\binom{p+q-1-2\xi}{q-1}\zeta(2\xi)\zeta(s+p+q-2\xi)\label{eq-9-3} \\
& +2(-1)^{p}\sum_{\xi=0}^{[q/2]}\ \frac{1}{2^{p+q-2\xi}}\binom{p+q-1-2\xi}{p-1}\zeta(2\xi)\zeta(s+p+q-2\xi) \notag\\
& -(-1)^{p}\zeta(s+p+q). \notag
\end{align}
On the other hand, we can see that the left-hand side can be written in terms of
the zeta-function of $B_2$.   Recall that
\begin{align*}
\zeta_2(s_1,s_2,s_3,s_4;B_2)&=\zeta_2((s_1,s_2,s_3,s_4),{\bf 0};\Delta(B_2))\\
&=\sum_{m_1=1}^{\infty}\sum_{m_2=1}^{\infty}\frac{1}{m_1^{s_1}m_2^{s_2}
(m_1+m_2)^{s_3}(2m_1+m_2)^{s_4}}.
\end{align*}
The contribution of the terms with $l>0$ to the left-hand side is obviously
$\zeta_2(s,p,0,q;B_2)$.   As for the terms with $l<0$, we rewrite $-l$ by $l$, 
divide the sum into three parts according to the conditions $l<m$, $m<l<2m$
and $l>2m$, and evaluate each part in terms of the zeta-function of $B_2$.
The conclusion is that the left-hand side is
\begin{align}
& \zeta_2(s,p,0,q;B_2)+(-1)^p\zeta_2(0,p,s,q;B_2) +(-1)^p\zeta_2(0,q,s,p;B_2)\label{eq-9-4}\\
& \qquad +(-1)^{p+q}\zeta_2(s,q,0,p;B_2).\notag
\end{align}
We combine \eqref{eq-9-3} and \eqref{eq-9-4} and multiply by $(-1)^p$. Then we can set $s=0$ because \eqref{eq-9-3} and \eqref{eq-9-4} are absolutely convergent for $s>-1$. 
Noting $\zeta_2(0,p,0,q;B_2)=\zeta_2^\sharp(p,q)$, we complete the proof of Theorem \ref{T-B2-EZ}.
\end{proof}

\ 

\proof[Acknowledgements]
The authors would like to express their sincere gratitude to 
Professor Mike Hoffman for pointing out that symmetric sums for MZVs in \eqref{EZ-Sr-11} can be written in terms of products of Riemann's zeta values at even positive integers and giving related valuable comments (see Remark \ref{Rem-Hof}).

\

\end{document}